\title{Disjunction and existence properties in modal arithmetic}
\author{Taishi Kurahashi\thanks{Email: kurahashi@people.kobe-u.ac.jp} 
\thanks{Graduate School of System Informatics, Kobe University, 1-1 Rokkodai, Nada, Kobe 657-8501, Japan.}
\and
Motoki Okuda\thanks{Independent scholar}}
\date{}
\theoremstyle{plain}
\newtheorem{thm}{Theorem}[section]
\newtheorem{lem}[thm]{Lemma}
\newtheorem{prop}[thm]{Proposition}
\newtheorem{cor}[thm]{Corollary}
\theoremstyle{definition}
\newtheorem{definition}[thm]{Definition}
\newtheorem{remark}[thm]{Remark}
\newtheorem{prob}[thm]{Problem}
\newcommand{\HA}{\mathbf{HA}}
\newcommand{\PA}{\mathbf{PA}}
\newcommand{\NS}{\mathbb{N}}
\newcommand{\K}{\mathbf{K}}
\newcommand{\KF}{\mathbf{K4}}
\newcommand{\KT}{\mathbf{KT}}
\newcommand{\SF}{\mathbf{S4}}
\newcommand{\SFi}{\mathbf{S5}}
\newcommand{\Triv}{\mathbf{Triv}}
\newcommand{\GL}{\mathbf{GL}}
\newcommand{\Ver}{\mathbf{Verum}}
\newcommand{\EA}{\mathbf{EA}}
\newcommand{\PR}{\mathrm{Pr}}
\newcommand{\Prf}{\mathrm{Prf}}
\newcommand{\Con}{\mathrm{Con}}
\newcommand{\LB}{\mathcal{L}_A(\Box)}
\newcommand{\gn}[1]{\ulcorner#1\urcorner}
\newcommand{\B}{\mathrm{B}}
\newcommand{\DB}{\Delta(\mathrm{B})}
\newcommand{\SB}{\Sigma(\mathrm{B})}
\newcommand{\DP}{\mathrm{DP}}
\newcommand{\EP}{\mathrm{EP}}
\newcommand{\MDP}{\mathrm{MDP}}
\newcommand{\MEP}{\mathrm{MEP}}
\newcommand{\DC}{\mathrm{DC}}
\begin{document}

\maketitle

\begin{abstract}
We systematically study several versions of the disjunction and the existence properties in modal arithmetic. 
First, we newly introduce three classes $\B$, $\DB$, and $\SB$ of formulas of modal arithmetic, and study basic properties of them. 
Then, we prove several implications between the properties. 
In particular, among other things, we prove that for any consistent recursively enumerable extension $T$ of $\PA(\K)$ with $T \nvdash \Box \bot$, the $\SB$-disjunction property, the $\SB$-existence property, and the $\B$-existence property are pairwise equivalent. 
Moreover, we introduce the notion of the $\SB$-soundness of theories, and prove that for any consistent recursively enumerable extension of $\PA(\KF)$, the modal disjunction property is equivalent to the $\SB$-soundness. 
\end{abstract}

\section{Introduction}

A theory or a logic $T$ is said to have the \textit{disjunction property} ($\DP$) if for any sentences $\varphi$ and $\psi$ in the language of $T$, if $T \vdash \varphi \lor \psi$, then $T \vdash \varphi$ or $T \vdash \psi$. 
This is a property that may be considered to represent the constructivity of intuitionistic logic. 
G\"odel (\cite{God33}) noted that the intuitionistic propositional logic has $\DP$. 
Gentzen (\cite{Gen34}) and Kleene (\cite{Kle45}) proved that the intuitionistic quantified logic and Heyting arithmetic $\HA$ have $\DP$, respectively. 
A property in arithmetic that is related to $\DP$ is the (numerical) existence property. 
We say that a theory $T$ of arithmetic has the \textit{existence property} ($\EP$) if for any formula $\varphi(x)$ that has no free variables except $x$, if $T \vdash \exists x \varphi(x)$, then $T \vdash \varphi(\overline{n})$ for some natural number $n$. 
Here $\overline{n}$ is the numeral for $n$. 
Kleene (\cite{Kle45}) also proved that $\HA$ has $\EP$. 
Moreover, Friedman (\cite{Fri75}) proved that for any recursively enumerable (r.e.) extension $T$ of $\HA$, $T$ has $\DP$ if and only if $T$ has $\EP$. 

\clearpage

A similar situation has been shown to be true for modal arithmetic. 
Modal arithmetic is a framework of arithmetic equipped with the unary modal operator $\Box$. 
Let $\mathcal{L}_A$ and $\LB$ be the languages of arithmetic and modal arithmetic, respectively. 
A prominent $\LB$-theory of modal arithmetic is $\EA$ (epistemic arithmetic) which is obtained by adding $\SF$ into Peano arithmetic $\PA$. 
The theory $\EA$ was independently introduced by Shapiro (\cite{Sha85}) and Reinhardt (\cite{Rei85,Rei86}). 
In this framework, $\Box$ is intended to represent knowability or informal provability, and the language $\mathcal{L}_A(\Box)$ has the expressive power to make analyses about these concepts.
Moreover, it was shown that $\HA$ is faithfully embeddable into $\EA$ via G\"odel's translation (cf.~\cite{Goo84,Sha85,FF86}). 
This result verifies Shapiro's suggestion that $\EA$ is a system about both classical and intuitionistic mathematics. 
From his suggestion, $\EA$ may possess some constructive properties. 
A theory or a logic $T$ is said to have the \textit{modal disjunction property} ($\MDP$) if for any $\LB$-sentences $\varphi$ and $\psi$, if $T \vdash \Box \varphi \lor \Box \psi$, then $T \vdash \varphi$ or $T \vdash \psi$. 
Also, $T$ is said to have the \textit{modal existence property} ($\MEP$) if for any $\LB$-formula $\varphi(x)$ that has no free variables except $x$, if $T \vdash \exists x \Box \varphi(x)$, then $T \vdash \varphi(\overline{n})$ for some natural number $n$. 
Then, Shapiro (\cite{Sha85}) proved that $\EA$ has both $\MDP$ and $\MEP$. 
Moreover, Friedman and Sheard (\cite{FS89}) proved that for any r.e.~$\LB$-theory $T$ extending $\EA$, $T$ has $\MDP$ if and only if $T$ has $\MEP$\footnote{Actually, Friedman and Sheard proved this theorem for a wider class of $\LB$-theories. This will be discussed in Remark \ref{RemFS}. }. 

In the case of classical logic, $\DP$ is related to the completeness of theories. 
Indeed, it is easy to see that a consistent theory $T$ based on classical logic has $\DP$ if and only if $T$ is complete. 
Hence, G\"odel--Rosser's first incompleteness theorem is restated as follows: 
For any consistent r.e.~extension $T$ of $\PA$, $T$ does not have $\DP$. 
In this context, G\"odel--Rosser's first incompleteness theorem can be strengthened. 
For a class $\Gamma$ of formulas, we say that a theory $T$ has the \textit{$\Gamma$-disjunction property} ($\Gamma$-$\DP$) if for any $\Gamma$ sentences $\varphi$ and $\psi$, if $T \vdash \varphi \lor \psi$, then $T \vdash \varphi$ or $T \vdash \psi$. 
Also, $T$ is said to have the \textit{$\Gamma$-existence property} ($\Gamma$-$\EP$) if for any $\Gamma$ formula $\varphi(x)$ that has no free variables except $x$, if $T \vdash \exists x \varphi(x)$, then $T \vdash \varphi(\overline{n})$ for some natural number $n$. 
Then, it is shown that for any consistent r.e.~extension $T$ of $\PA$, $T$ does not have $\Pi_1$-$\DP$ (see \cite{JE76}). 
On the other hand, for extensions of $\PA$, a similar situation to that of $\DP$ and $\EP$ in intuitionistic logic has been shown to hold.
That is, it is known that $\PA$ has both $\Sigma_1$-$\DP$ and $\Sigma_1$-$\EP$. 
Moreover, Guaspari (\cite{Gua79}) proved that $\Sigma_1$-$\DP$, $\Sigma_1$-$\EP$, and the $\Sigma_1$-soundness are pairwise equivalent for any consistent r.e.~extension of $\PA$. 

In the usual proof of the incompleteness theorems, a provability predicate $\PR_T(x)$, that is, a $\Sigma_1$ formula weakly representing the provability relation of a theory $T$ plays an important role. 
Besides the context in which $\Box$ is intended as informal provability, a modal logical study of the notion of formalized provability has been developed by interpreting $\Box$ in terms of $\PR_T(x)$. 
One of the important results of this study is Solovay's arithmetical completeness theorem which states that if $T$ is $\Sigma_1$-sound, then the propositional modal logic $\GL$ is exactly the logic of all $T$-verifiable principles (\cite{Sol76}). 
In this framework, $\MDP$ also makes sense. 
It is known that $\GL$ enjoys $\MDP$. 
Rather than corresponding to some constructive property, this fact corresponds to the fact that if $T$ is $\Sigma_1$-sound, then $T \vdash \PR_{T}(\gn{\varphi}) \lor \PR_{T}(\gn{\psi})$ implies $T \vdash \varphi$ or $T \vdash \psi$. 

Our motivation for the research in the present paper is to provide a unified viewpoint on $\MDP$ and $\Gamma$-$\DP$, which have been discussed in different contexts and frameworks. 
In particular, we would like to unify the arguments on $\Box$ as an informal provability and $\Box$ as a provability predicate. 
For this purpose, instead of fixing a modal logic such as $\SF$ or $\GL$, we discuss the theory $\PA(L)$ obtained by adding an arbitrary normal modal logic $L$ to $\PA$.
In particular, $\KF$ is a common sublogic of $\SF$ and $\GL$, and thus an investigation for extensions of $\PA(\KF)$ would be applicable to both of the two different interpretations of $\Box$. 
For example, we prove that for any r.e.~$\LB$-theory $T$ extending $\PA(\KF)$, $T$ has $\MDP$ if and only if $T$ has $\MEP$. 
This is a strengthening of the above mentioned form of Friedman and Sheard's result. 

We would also like to analyze the possibility of applying existing methods for studying properties such as $\Gamma$-$\DP$ to modal arithmetic. 
In particular, as suggested by Guaspari's result, $\MDP$ and $\MEP$ may be characterized by soundness with respect to some class of $\LB$-formulas. 
For this reason, in the present paper, we introduce three new classes $\B$, $\DB$, and $\SB$ of $\LB$-formulas. 
Then, we prove that for any consistent r.e.~$\LB$-theory $T$ extending $\PA(\KF)$, $T$ has $\MDP$ if and only if $T$ is $\SB$-sound. 
We also provide a systematic analysis of the disjunction and the existence properties in modal arithmetic, including investigations of $\DP$ and $\EP$ concerning these new classes of formulas.

The present paper is organized as follows. 
In \ref{Sec:MA}, we introduce several theories of modal arithmetic, and show that each of them is a conservative extension of $\PA$. 
In \ref{Sec:Classes}, we introduce three new classes $\B$, $\DB$, and $\SB$ of $\LB$-formulas, and show some basic properties of these classes. 
\ref{Sec:B-DB} is devoted to the study of $\B$-$\DP$, $\DB$-$\DP$, and related properties. 
In \ref{Sec:SB}, we study $\SB$-$\DP$ and related properties. 
In particular, we prove that for any r.e.~extension $T$ of the theory $\PA(\K)$, if $T \nvdash \Box \bot$, then $\SB$-$\DP$, $\SB$-$\EP$, and $\B$-$\EP$ are pairwise equivalent. 
From this result, the equivalence of $\MDP$ and $\MEP$ for any consistent r.e.~$\LB$-theory extending $\PA(\KF)$ is obtained. 
In \ref{Sec:Soundness}, as generalizations of the notions of the soundness and the $\Sigma_1$-soundness of $\mathcal{L}_A$-theories, we introduce the notions of the $\LB$-soundness and the $\SB$-soundness of $\LB$-theories. 
We study these notions precisely, and then, we prove that for any consistent r.e.~extension $T$  of $\PA(\KF)$, $T$ has $\MDP$ if and only $T$ is $\SB$-sound. 
This is a modal arithmetical analogue of Guaspari's theorem. 
Figure \ref{Fig1} summarizes our results obtained in \ref{Sec:B-DB}, \ref{Sec:SB}, and \ref{Sec:Soundness}. 
We also show some non-implications between the properties: 
$\Sigma_1$-soundness does not imply $(\B, \Sigma_1)$-$\DP$ (Proposition \ref{nDP}), $\SB$-$\EP$ does not imply $\MDP$ (Proposition \ref{nclosed}), and $\SB$-$\DC$ does not imply $\B$-$\DP$ (Proposition \ref{ex2}). 
Finally, in the last section, we list several unsolved problems.

\begin{figure}[ht]
\centering
\begin{tikzpicture}
\node (MDP) at (0,6) {MDP};
\node (MEP) at (3.5,6) {MEP};
\node (MSS) at (7,6) {$\SB$-sound};

\node (MSDP) at (0,5) {$\SB$-DP};
\node (MSEP) at (3.5,5) {$\SB$-EP};
\node (WMEP) at (7,5) {$\B$-EP};
\node (WMSS1) at (10.5,5.3) {weakly};
\node (WMSS) at (10.5,5) {$\SB$-sound};

\node (MSSDP) at (3.5,4) {$(\SB, \Sigma_1)$-DP};
\node (BDSDP) at (3.5,3) {$(\DB, \Sigma_1)$-DP};
\node (BSDP) at (3.5,-1.4) {$(\B, \Sigma_1)$-DP};
\node (MSDC) at (7,4) {$\SB$-DC};
\node (BDDC) at (7,3) {$\DB$-DC};
\node (BDC) at (7,-1.4) {$\B$-DC};

\node (BDDP) at (0, 4) {$\DB$-DP};
\node (P) at (0,3.1) {$\vdots$}; 
\node (n+1WMDP) at (0,2.2) {$\B^{n+1}$-DP};
\node (nWMDP) at (0,1.4) {$\B^n$-DP};
\node (P2) at (0,0.5) {$\vdots$}; 
\node (WMDP) at (0,-0.4) {$\B$-DP};
\node (SS) at (3.5,-2.4) {$\Sigma_1$-sound};

\node at (4.3,-1.9) {\scriptsize{Prop.~\ref{DPtoS1sound}}};
\node at (5.5,-1.2) {\scriptsize{Cor.~\ref{CorB-DB}}};
\node at (1.7,-0.6) {\scriptsize{Prop.~\ref{DPtoSDP2}}};
\node at (1,1.8) {\scriptsize{Prop.~\ref{BDP}}};
\node at (1,3) {\scriptsize{Prop.~\ref{BDP}}};
\node at (5.5,3.2) {\scriptsize{Cor.~\ref{CorB-DB}}};
\node at (1.7,3.8) {\scriptsize{Prop.~\ref{DPtoSDP2}}};
\node at (5.5,4.2) {\scriptsize{Cor.~\ref{SBDPDC}}};
\node at (1.8,5.2) {\scriptsize{Thm.~\ref{SBDP}}};
\node at (5.4,5.2) {\scriptsize{Thm.~\ref{SBDP}}};
\node at (8.5,5.2) {\scriptsize{Thm.~\ref{weakSBsound}}};
\node at (1,5.6) {\scriptsize{Prop.~\ref{MDPtoSBDP}}};
\node at (1.8,6.2) {\scriptsize{Cor.~\ref{FSrev}}};
\node at (5,6.2) {\scriptsize{Cor.~\ref{SBsound}}};

\draw [<->, double] (MDP)--(MEP);
\draw [<->, double] (MEP)--(MSS);

\draw [->, double] (MDP)--(MSDP);

\draw [<->, double] (MSDP)--(MSEP);
\draw [<->, double] (MSEP)--(WMEP);
\draw [<->, double] (WMEP)--(WMSS);

\draw [->, double] (MSDP)--(MSSDP);
\draw [->, double] (MSDP)--(BDDP);

\draw [<->, double] (MSSDP)--(MSDC);
\draw [->, double] (MSSDP)--(BDSDP);
\draw [<->, double] (BDSDP)--(BDDC);
\draw [->, double] (BDSDP)--(BSDP);
\draw [<->, double] (BSDP)--(BDC);
\draw [->, double] (BSDP)--(SS);

\draw [->, double] (BDDP)--(BDSDP);
\draw [->, double] (BDDP)--(P);
\draw [->, double] (P)--(n+1WMDP);
\draw [->, double] (n+1WMDP)--(nWMDP);
\draw [->, double] (nWMDP)--(P2);
\draw [->, double] (P2)--(WMDP);
\draw [->, double] (WMDP)--(BSDP);
\draw [->, double] (BSDP)--(SS);
\end{tikzpicture}
\caption{Implications for consistent r.e.~extensions $T$ of $\PA(\KF)$ with $T \nvdash \Box \bot$}\label{Fig1}
\end{figure}

\section{Theories of modal arithmetic}\label{Sec:MA}

We work within the framework of modal arithmetic. 
The language $\LB$ of modal arithmetic consists of logical connectives $\bot, \land, \lor, \to, \neg$, quantifiers $\forall, \exists$, elements of the language $\mathcal{L}_A = \{0, S, +, \times, \leq, =\}$ of first-order arithmetic, and modal operator $\Box$. 
The formulas $\varphi \leftrightarrow \psi$ and $\Diamond \varphi$ are abbreviations for $(\varphi \to \psi) \land (\psi \to \varphi)$ and $\neg \Box \neg \varphi$, respectively.  
A set of sentences is called a \textit{theory}. 
In the present paper, we always assume that the inference rules of every $\LB$-theory are modus ponens (MP) $\dfrac{\varphi \to \psi \quad \varphi}{\psi}$, generalization (\textsc{Gen}) $\dfrac{\varphi}{\forall x \varphi}$, and necessitation (\textsc{Nec}) $\dfrac{\varphi}{\Box \varphi}$. 
Since we will study modal arithmetic from a broader perspective than just $\EA$, we also deal with $\LB$-theories obtained by adding normal modal propositional logics other than $\SF$ into $\PA$. 
Let $\PA_\Box$ be the $\LB$-theory obtained by adding the logical axioms of first-order logic for $\LB$-formulas and the induction axioms for $\LB$-formulas into $\PA$. 
Notice that the value of each $\mathcal{L}_A$-term $t(\vec{x})$ can be effectively computed from the input $\vec{x}$, and thus universal instantiation $\forall x \varphi(x) \to \varphi(t)$ (where $t$ is an $\mathcal{L}_A$-term substitutable for $x$ in $\varphi$), which is problematic in modal predicate logic, is not a problem in our framework. 
As in \cite{Rei85,Sha85,FS89,Dos90}, we adopt universal instantiation as an axiom scheme of $\PA_\Box$. 
Of course, this is not the case in general framework (see \cite[Section 7]{Sha85}). 

For each normal modal propositional logic $L$, let $\PA(L)$ denote the $\LB$-theory obtained by adding universal closures of formulas corresponding to modal axioms of $L$ into $\PA_\Box$. 
We deal with the following $\LB$-theories. 

\begin{itemize}
	\item $\PA(\K) = \PA_\Box + \{\forall \vec{x}(\Box(\varphi \to \psi) \to (\Box \varphi \to \Box \psi)) \mid \varphi, \psi$ are $\LB$-formulas$\}$; 
	\item $\PA(\KF) = \PA(\K) + \{\forall \vec{x}(\Box \varphi \to \Box \Box \varphi) \mid \varphi$ is an $\LB$-formula$\}$; 
	\item $\PA(\KT) = \PA(\K) + \{\forall \vec{x}(\Box \varphi \to \varphi) \mid \varphi$ is an $\LB$-formula$\}$; 
	\item $\PA(\SF) = \EA = \PA(\KT) + \{\forall \vec{x}(\Box \varphi \to \Box \Box \varphi) \mid \varphi$ is an $\LB$-formula$\}$; 
	\item $\PA(\SFi) = \PA(\SF) + \{\forall \vec{x}(\Diamond \varphi \to \Box \Diamond \varphi) \mid \varphi$ is an $\LB$-formula$\}$; 
	\item $\PA(\Triv) = \PA(\K) + \{\forall \vec{x}(\Box \varphi \leftrightarrow \varphi) \mid \varphi$ is an $\LB$-formula$\}$; 
	\item $\PA(\GL) = \PA(\KF) + \{\forall \vec{x}(\Box (\Box \varphi \to \varphi) \to \Box \varphi) \mid \varphi$ is an $\LB$-formula$\}$; 
	\item $\PA(\Ver) = \PA(\K) + \{\Box \bot\}$.  
\end{itemize}
Interestingly, Do\v{s}en \cite[Lemma 7]{Dos90} proved that $\PA(\SFi)$ and $\PA(\Triv)$ are deductively equivalent. 

Here we discuss the principle $x = y \to (\varphi(x) \to \varphi(y))$ of identity. 
Our system has this principle only for atomic formulas $\varphi(x)$ as identity axioms as in the case of classical first-order logic. 
On the other hand, this principle for all $\LB$-formulas is not generally valid in our framework because our language has the symbol $\Box$. 
Shapiro (\cite{Sha85}) states that the following proposition holds for $\PA(\SF)$. 

\begin{prop}\label{PAK}\leavevmode
\begin{enumerate}
	\item $\PA(\K) \vdash x = y \to \Box x = y$. 
	\item For any $\LB$-formula $\varphi(x)$, $\PA(\K) \vdash x = y \to (\varphi(x) \to \varphi(y))$. 
\end{enumerate}
\end{prop}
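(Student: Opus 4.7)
The plan is first to establish the converse Barcan formula in $\PA(\K)$: from the universal instantiation axiom $\forall z\,\chi \to \chi$ I would apply \textsc{Nec}, then the K-axiom to obtain $\Box \forall z\,\chi \to \Box \chi$, and finally \textsc{Gen} together with the fact that $z$ is not free in $\Box \forall z\,\chi$ yields $\Box \forall z\,\chi \to \forall z\, \Box \chi$. With this in hand, for part (1) I would prove $\forall x \forall y(x = y \to \Box(x = y))$ by double induction on $x$ and then on $y$ inside $\PA(\K)$, which is legitimate since $\PA_\Box$'s induction schema covers all $\LB$-formulas. Three of the four case combinations are settled directly: the $(0,0)$ case reduces to $\Box(0=0)$, obtained from $\PA \vdash 0 = 0$ by \textsc{Nec}; and whenever exactly one side is $0$ and the other a successor, the $\PA$-axiom $0 \neq S(u)$ refutes the antecedent, so the conditional holds vacuously. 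The one substantive case is the $(S(x),S(y'))$-step: from $S(x) = S(y')$ the $S$-injectivity axiom gives $x = y'$; the outer induction hypothesis then supplies $\Box(x = y')$; and the $\PA$-provable congruence $\forall u \forall v\,(u = v \to S(u) = S(v))$, pushed inside $\Box$ via \textsc{Nec}, converse Barcan, and the K-axiom, gives $\Box(x = y') \to \Box(S(x) = S(y'))$, closing the step.

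For part (2), I would argue by induction on the complexity of $\varphi$. The atomic case is exactly the posited identity axiom, and the propositional and first-order quantifier cases are the classical inductive steps, using that $x = y \to y = x$ already follows from the atomic identity scheme. The only delicate case is $\varphi = \Box \psi$: the induction hypothesis provides $x = y \to (\psi(x) \to \psi(y))$, and then \textsc{Gen}, \textsc{Nec}, converse Barcan, and two applications of the K-axiom deliver $\Box(x = y) \to (\Box \psi(x) \to \Box \psi(y))$. Feeding the conclusion of part (1) into the antecedent converts this into $x = y \to (\Box \psi(x) \to \Box \psi(y))$, completing the induction.

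The main obstacle is part (1). Classically, Leibniz's law for arbitrary formulas is derived by induction on complexity from the atomic case, but this induction breaks at $\Box$: the formula $\Box(x = y)$ is not atomic, and no propositional or quantifier rule decomposes it. What rescues the argument is that arithmetic decides equality up to the syntactic shape of a natural number: every element is $0$ or a successor, and $0 \neq S(u)$ is an axiom. This reduces the necessity of identity via double induction to the single $S$-vs-$S$ case, which is then discharged by the converse-Barcan-plus-K manipulation that will subsequently handle the $\Box$-step of part (2).
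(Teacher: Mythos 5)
Your proposal is correct and follows essentially the same route as the paper: a double induction on $x$ and $y$ inside $\PA(\K)$ for Clause 1, with the $(0,\cdot)$ and $(\cdot,0)$ cases handled by $0=0$ and $0\neq S(u)$, the $(S,S)$ case by $S$-injectivity plus the congruence $x=y\to S(x)=S(y)$ necessitated and distributed via K, and then induction on formula complexity for Clause 2 with the $\Box$-case discharged by Clause 1. The only cosmetic difference is your explicit detour through the converse Barcan formula, which the paper avoids by applying \textsc{Nec} and the (universally closed) K-axiom directly to open formulas.
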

\begin{proof}
1. Let $\varphi(x, y)$ be the formula $x = y \to \Box x = y$. 
Firstly, we prove $\PA_\Box \vdash \forall y \varphi(0, y)$. 
Since $\PA \vdash 0 = 0$, we have $\PA_\Box \vdash \Box 0 = 0$, and hence $\PA_\Box \vdash \varphi(0, 0)$. 
Since $\PA \vdash 0 \neq S(y)$, we also have $\PA_\Box \vdash \varphi(0, S(y))$, and thus $\PA_\Box \vdash \forall y(\varphi(0, y) \to \varphi(0, S(y)))$. 
By the induction axiom for $\varphi(0, y)$, we obtain $\PA_\Box \vdash \forall y \varphi(0, y)$. 

Secondly, we prove $\PA(\K) \vdash \forall y \varphi(x, y) \to \forall y \varphi(S(x), y)$. 
Since $\PA \vdash S(x) \neq 0$, we have $\PA_\Box \vdash \varphi(S(x), 0)$. 
It follows from $\PA \vdash S(x) = S(y) \to x = y$ that $\PA_\Box \vdash \varphi(x, y) \land S(x) = S(y) \to \Box x = y$. 
Since $\PA \vdash x = y \to S(x) = S(y)$, we have $\PA(\K) \vdash \Box x = y \to \Box S(x) = S(y)$. 
Thus, we get 
\[
	\PA(\K) \vdash \varphi(x, y) \land S(x) = S(y) \to \Box S(x) = S(y).
\] 
This means $\PA(\K) \vdash \varphi(x, y) \to \varphi(S(x), S(y))$. 
By the universal instantiation, we have $\PA(\K) \vdash \forall y \varphi(x, y) \to \varphi(S(x), S(y))$, and hence 
\[
	\PA(\K) \vdash \forall y\varphi(x, y) \to \forall y(\varphi(S(x), y) \to \varphi(S(x), S(y))).
\] 
From this with $\PA_\Box \vdash \varphi(S(x), 0)$, we obtain $\PA(\K) \vdash \forall y \varphi(x, y) \to \forall y \varphi(S(x), y)$ by the induction axiom for $\varphi(S(x), y)$. 

Finally, by the induction axiom for $\forall y \varphi(x, y)$, we conclude $\PA(\K) \vdash \forall x \forall y \varphi(x, y)$. 

2. This is proved by induction on the construction of $\varphi(x)$. 
We only prove the case that $\varphi(x)$ is of the form $\Box \psi(x)$ and the statement holds for $\psi(x)$. 
By the induction hypothesis, $\PA(\K) \vdash x = y \to (\psi(x) \to \psi(y))$. 
Then, $\PA(\K)$ proves $\Box x = y \to (\Box \psi(x) \to \Box \psi(y))$. 
By combining this with Clause 1, we conclude $\PA(\K) \vdash x = y \to (\Box \psi(x) \to \Box \psi(y))$.
\end{proof}

We say that a theory $T$ is a \textit{subtheory} of a theory $U$, $U \vdash T$, if every axiom of $T$ is provable in $U$. 
Makinson's theorem (\cite{Mak71}) states that every consistent normal modal propositional logic $L$ is a sublogic of $\Triv$ or $\Ver$ (see also \cite{HC96}). 
Hence, every $\LB$-theory of the form $\PA(L)$ for some consistent normal propositional modal logic $L$ is a subtheory of $\PA(\Triv)$ or $\PA(\Ver)$. 
We prove that every such logic is a conservative extension of $\PA$. 

First, we prove that $\PA(\Triv)$ is a conservative extension of $\PA$. 
In order to prove this, we introduce a translation $\alpha$ of $\LB$-formulas into $\mathcal{L}_A$-formulas. 

\begin{definition}[$\alpha$-translation]
We define a translation $\alpha$ of $\LB$-formulas into $\mathcal{L}_A$-formulas inductively as follows: 
\begin{enumerate}
	\item If $\varphi$ is an $\mathcal{L}_A$-formula, then $\alpha(\varphi) : \equiv \varphi$; 
	\item $\alpha$ preserves logical connectives and quantifiers; 
	\item $\alpha(\Box \varphi) : \equiv \alpha(\varphi)$. 
\end{enumerate}
\end{definition}

It is obvious that for any $\LB$-formula $\varphi$, $\PA(\Triv) \vdash \varphi \leftrightarrow \alpha(\varphi)$. 
Moreover, 

\begin{prop}\label{alpha}
For any $\LB$-formula $\varphi$, if $\PA(\Triv) \vdash \varphi$, then $\PA \vdash \alpha(\varphi)$. 
\end{prop}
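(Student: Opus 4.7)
The plan is to prove this by induction on the length of a $\PA(\Triv)$-proof of $\varphi$, showing that every axiom $\chi$ of $\PA(\Triv)$ satisfies $\PA \vdash \alpha(\chi)$ and that each inference rule preserves this property.

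First I would check the axioms. The axioms of $\PA$ are $\mathcal{L}_A$-formulas, so $\alpha$ acts as the identity on them. The logical axioms of first-order logic for $\LB$-formulas translate under $\alpha$ to the corresponding logical axioms for $\mathcal{L}_A$-formulas, because $\alpha$ preserves the logical connectives and quantifiers; the universal instantiation axiom $\forall x \varphi(x) \to \varphi(t)$ translates to $\forall x \alpha(\varphi)(x) \to \alpha(\varphi)(t)$, again an instance of universal instantiation in $\PA$. Identity axioms are atomic, so they are already in $\mathcal{L}_A$. The induction axiom for $\varphi(x)$ translates to the induction axiom for $\alpha(\varphi)(x)$, which is an $\mathcal{L}_A$-formula, hence provable in $\PA$. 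For the $\K$-axiom $\forall \vec{x}(\Box(\varphi \to \psi) \to (\Box \varphi \to \Box \psi))$, the translation becomes $\forall \vec{x}((\alpha(\varphi) \to \alpha(\psi)) \to (\alpha(\varphi) \to \alpha(\psi)))$, a tautology. Likewise the $\Triv$-axiom $\forall \vec{x}(\Box \varphi \leftrightarrow \varphi)$ translates to $\forall \vec{x}(\alpha(\varphi) \leftrightarrow \alpha(\varphi))$.

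Next I would verify the inference rules. For modus ponens, $\alpha(\varphi \to \psi) \equiv \alpha(\varphi) \to \alpha(\psi)$, so if $\PA$ proves both $\alpha(\varphi \to \psi)$ and $\alpha(\varphi)$, it proves $\alpha(\psi)$. For generalization, $\alpha(\forall x \varphi) \equiv \forall x \alpha(\varphi)$, so $\PA \vdash \alpha(\varphi)$ gives $\PA \vdash \alpha(\forall x \varphi)$. The key case is necessitation: by the definition of $\alpha$, we have $\alpha(\Box \varphi) \equiv \alpha(\varphi)$, so if $\PA \vdash \alpha(\varphi)$, then trivially $\PA \vdash \alpha(\Box \varphi)$. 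This last observation is exactly why the clause $\alpha(\Box \varphi) :\equiv \alpha(\varphi)$ was chosen, and it is what makes the necessitation rule harmless for the induction.

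There is no real obstacle here; the only point requiring any care is the bookkeeping of verifying that $\alpha$ commutes with substitution into terms (needed so that the translations of universal instantiation and the induction axiom are genuine instances of the corresponding $\PA$-schemes), but this is immediate since $\alpha$ does not touch $\mathcal{L}_A$-terms and preserves connectives and quantifiers. Putting the axiom checks and the three rule cases together completes the induction and establishes the proposition.
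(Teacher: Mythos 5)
Your proposal is correct and follows essentially the same route as the paper: induction on the length of the $\PA(\Triv)$-proof, checking that $\alpha$ sends each axiom to a $\PA$-provable formula and that MP, \textsc{Gen}, and \textsc{Nec} preserve $\PA$-provability of the translation, with the clause $\alpha(\Box\varphi) :\equiv \alpha(\varphi)$ trivializing the necessitation case. The extra remarks on universal instantiation and the commutation of $\alpha$ with term substitution are just a more explicit rendering of the paper's ``if $\varphi$ is a logical axiom, then so is $\alpha(\varphi)$'' step.
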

\begin{proof}
We prove the proposition by induction on the length of proofs of $\varphi$ in $\PA(\Triv)$. 
\begin{itemize}
	\item If $\varphi$ is an axiom of $\PA$, then $\alpha(\varphi) \equiv \varphi$ and $\PA \vdash \alpha(\varphi)$.
	\item If $\varphi$ is a logical axiom, then so is $\alpha(\varphi)$, and it is $\PA$-provable. 
	\item If $\varphi$ is an induction axiom in the language $\LB$, then $\alpha(\varphi)$ is also an induction axiom in $\mathcal{L}_A$, and so $\PA \vdash \alpha(\varphi)$. 
	\item If $\varphi$ is $\forall \vec{x}(\Box(\psi \to \sigma) \to (\Box \psi \to \Box \sigma))$, then $\alpha(\varphi)$ is the $\PA$-provable sentence $\forall \vec{x} \bigl((\alpha(\psi) \to \alpha(\sigma)) \to (\alpha(\psi) \to \alpha(\sigma)) \bigr)$. 
	\item If $\varphi$ is $\forall \vec{x} (\psi \leftrightarrow \Box \psi)$, then $\alpha(\varphi)$ is $\forall \vec{x}(\alpha(\psi) \leftrightarrow \alpha(\psi))$. 
	This is provable in $\PA$. 
	\item If $\varphi$ is derived from $\psi$ and $\psi \to \varphi$ by MP, then by the induction hypothesis, $\PA \vdash \alpha(\psi)$ and $\PA \vdash \alpha(\psi) \to \alpha(\varphi)$, and hence $\PA \vdash \alpha(\varphi)$. 
	\item If $\varphi$ is derived from $\psi(x)$ by \textsc{Gen}, then $\varphi \equiv \forall x \psi(x)$. 
	By the induction hypothesis, $\PA \vdash \alpha(\psi(x))$ and hence $\PA \vdash \forall x \alpha(\psi(x))$. 
	Therefore, $\PA \vdash \alpha(\varphi)$. 
	\item If $\varphi$ is derived from $\psi$ by \textsc{Nec}, then $\varphi \equiv \Box \psi$. 
	By the induction hypothesis, $\PA \vdash \alpha(\psi)$. 
	Since $\alpha(\varphi) \equiv \alpha(\psi)$, we have $\PA \vdash \alpha(\varphi)$. 
\end{itemize}
\end{proof}

Let $\NS$ be the standard model of arithmetic in the language $\mathcal{L}_A$. 
We say that an $\LB$-theory $T$ is \textit{$\mathcal{L}_A$-sound} if for any $\mathcal{L}_A$-sentence $\varphi$, $\NS \models \varphi$ whenever $T \vdash \varphi$.

\begin{cor}\label{Triv}
$\PA(\Triv)$ is a conservative extension of $\PA$. 
In particular, $\PA(\Triv)$ is $\mathcal{L}_A$-sound. 
\end{cor}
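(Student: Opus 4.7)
The plan is to derive both claims directly from Proposition \ref{alpha} together with the key observation that the $\alpha$-translation is the identity on $\mathcal{L}_A$-formulas.

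For conservativity, I would let $\varphi$ be an $\mathcal{L}_A$-sentence with $\PA(\Triv) \vdash \varphi$. Applying Proposition \ref{alpha} gives $\PA \vdash \alpha(\varphi)$. Since $\varphi$ already lies in $\mathcal{L}_A$, clause 1 of the definition of $\alpha$ yields $\alpha(\varphi) \equiv \varphi$ syntactically, so $\PA \vdash \varphi$ as required. The reverse direction, $\PA \vdash \varphi$ implies $\PA(\Triv) \vdash \varphi$, is immediate since $\PA(\Triv)$ extends $\PA$ by construction.

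For $\mathcal{L}_A$-soundness, I would chain conservativity with the well-known soundness of $\PA$ itself: given an $\mathcal{L}_A$-sentence $\varphi$ with $\PA(\Triv) \vdash \varphi$, conservativity gives $\PA \vdash \varphi$, and then $\NS \models \varphi$ follows from $\NS \models \PA$.

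There is no real obstacle here; the entire work has been done in Proposition \ref{alpha}, and the corollary is essentially a bookkeeping step recording that the translation collapses to the identity on the arithmetical fragment. The only thing worth emphasizing in the write-up is why clause 1 of the definition of $\alpha$ guarantees the syntactic identity $\alpha(\varphi) \equiv \varphi$ for $\mathcal{L}_A$-formulas, since this is the single fact that converts Proposition \ref{alpha} into a conservativity statement.
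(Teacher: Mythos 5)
Your proposal is correct and follows exactly the paper's own argument: apply Proposition \ref{alpha} to get $\PA \vdash \alpha(\varphi)$, observe that $\alpha(\varphi) \equiv \varphi$ for $\mathcal{L}_A$-sentences, and derive $\mathcal{L}_A$-soundness from the soundness of $\PA$. No differences worth noting.
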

\begin{proof}
Let $\varphi$ be any $\mathcal{L}_A$-sentence such that $\PA(\Triv) \vdash \varphi$. 
By Proposition \ref{alpha}, $\PA \vdash \alpha(\varphi)$. 
Since $\alpha(\varphi) \equiv \varphi$, $\PA \vdash \varphi$. 
Furthermore, by the $\mathcal{L}_A$-soundness of $\PA$, $\PA(\Triv)$ is also $\mathcal{L}_A$-sound. 
\end{proof}

Next, we prove that $\PA(\Ver)$ is a conservative extension of $\PA$. 
We also introduce another translation $\beta$.

\begin{definition}[$\beta$-translation]
We define a translation $\beta$ of $\LB$-formulas into $\mathcal{L}_A$-formulas inductively as follows: 
\begin{enumerate}
	\item If $\varphi$ is an $\mathcal{L}_A$-formula, then $\beta(\varphi) : \equiv \varphi$; 
	\item $\beta$ preserves logical connectives and quantifiers; 
	\item $\beta(\Box \varphi) : \equiv 0=0$. 
\end{enumerate}
\end{definition}

As in the case of $\alpha$, for any $\LB$-formula $\varphi$, $\PA(\Ver) \vdash \varphi \leftrightarrow \beta(\varphi)$. 
Moreover, 

\begin{prop}\label{beta}
For any $\LB$-formula $\varphi$, if $\PA(\Ver) \vdash \varphi$, then $\PA \vdash \beta(\varphi)$. 
\end{prop}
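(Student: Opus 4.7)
The plan is to imitate the proof of Proposition \ref{alpha} verbatim in structure, proceeding by induction on the length of a $\PA(\Ver)$-derivation of $\varphi$. Since $\beta$ preserves the propositional connectives and quantifiers and acts as the identity on $\mathcal{L}_A$-formulas, the arithmetic axioms, logical axioms, induction axioms on $\LB$-formulas, and universal instantiation instances translate to the corresponding schemes in $\mathcal{L}_A$, all of which are provable in $\PA$. For this step, the one subsidiary fact to observe (by a routine sub-induction on formula complexity) is that $\beta$ commutes with substitution of $\mathcal{L}_A$-terms, i.e.\ $\beta(\psi(t)) \equiv \beta(\psi)(t)$; this is immediate because substitution never interacts with a $\Box$, which is always collapsed to $0=0$.

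The cases specific to $\PA(\Ver)$ are all trivial precisely because $\beta$ sends every boxed formula to $0=0$. For a K-axiom $\forall \vec{x}(\Box(\psi \to \sigma) \to (\Box \psi \to \Box \sigma))$, the translation is $\forall \vec{x}(0{=}0 \to (0{=}0 \to 0{=}0))$, a logical truth. For the $\Ver$-axiom $\Box \bot$, the translation is $0{=}0$. For the inference rules, MP and \textsc{Gen} are handled exactly as in Proposition \ref{alpha}. The \textsc{Nec} rule is the key point: if $\varphi \equiv \Box \psi$ is inferred from $\psi$, then regardless of the induction hypothesis on $\psi$ we immediately have $\beta(\varphi) \equiv 0{=}0$, which is $\PA$-provable.

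There is essentially no hard step here; the force of the argument is simply that $\beta$ has been designed so that every modal axiom and every application of \textsc{Nec} becomes vacuous after translation. The only mildly non-trivial bookkeeping is the verification that $\beta$ respects substitution, needed for the universal instantiation scheme, and the observation that $\beta$ of an $\LB$-induction axiom is literally an $\mathcal{L}_A$-induction axiom, both of which are settled by inspection of the definition of $\beta$.
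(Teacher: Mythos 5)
Your proposal is correct and follows essentially the same route as the paper: induction on the length of the $\PA(\Ver)$-proof, mirroring the proof of Proposition \ref{alpha}, with every modal axiom and every application of \textsc{Nec} trivialized because $\beta$ collapses boxed formulas to $0=0$. The extra observation that $\beta$ commutes with term substitution is a harmless (and correct) piece of bookkeeping that the paper leaves implicit.
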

\begin{proof}
As in the proof of Proposition \ref{alpha}, this proposition is proved by induction on the length of proofs of $\varphi$ in $\PA(\Ver)$. 
We only give proofs of the following three cases: 
\begin{itemize}
	\item If $\varphi$ is $\forall \vec{x}(\Box(\psi \to \sigma) \to (\Box \psi \to \Box \sigma))$, then $\beta(\varphi)$ is the $\PA$-provable sentence $\forall \vec{x} (0=0 \to (0=0 \to 0=0))$. 
	\item If $\varphi$ is $\Box \bot$, then $\beta(\varphi)$ is the $\PA$-provable sentence $0=0$. 
	\item If $\varphi$ is derived from $\psi$ by \textsc{Nec}, then $\varphi \equiv \Box \psi$. 
	Since $\beta(\varphi) \equiv 0=0$, this is $\PA$-provable.  
\end{itemize}
\end{proof}

\begin{cor}\label{Ver}
$\PA(\Ver)$ is a conservative extension of $\PA$. 
In particular, $\PA(\Ver)$ is $\mathcal{L}_A$-sound. 
\end{cor}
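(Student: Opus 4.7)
The plan is to mirror the proof of Corollary \ref{Triv}, merely replacing the translation $\alpha$ with $\beta$. Concretely, I would take an arbitrary $\mathcal{L}_A$-sentence $\varphi$ with $\PA(\Ver) \vdash \varphi$ and invoke Proposition \ref{beta} to get $\PA \vdash \beta(\varphi)$. The key observation is that since $\varphi$ contains no occurrences of $\Box$, clause (1) of the definition of $\beta$ applies to the outermost formula and, propagated through clause (2), gives $\beta(\varphi) \equiv \varphi$ syntactically. Hence $\PA \vdash \varphi$, which is precisely the conservativity assertion.

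For the second sentence of the corollary, I would then observe that any $\mathcal{L}_A$-sentence provable in $\PA(\Ver)$ is provable in $\PA$ by the conservativity just established, and so is true in $\NS$ by the standard $\mathcal{L}_A$-soundness of $\PA$; this yields $\mathcal{L}_A$-soundness of $\PA(\Ver)$ directly.

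There is essentially no obstacle: all the work has already been absorbed into Proposition \ref{beta}, whose proof handles the new axiom $\Box \bot$ (translating to the trivially provable $0=0$) and the rule \textsc{Nec} (whose conclusion $\Box \psi$ again translates to $0=0$). The corollary itself is a one-line consequence, structurally identical to Corollary \ref{Triv}, and I would present it in the same compact format the authors use there.
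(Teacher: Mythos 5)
Your proposal is correct and is exactly the argument the paper intends: the corollary is left without an explicit proof precisely because it follows from Proposition \ref{beta} in the same one-line way that Corollary \ref{Triv} follows from Proposition \ref{alpha}, using that $\beta(\varphi) \equiv \varphi$ for $\mathcal{L}_A$-sentences and the $\mathcal{L}_A$-soundness of $\PA$. No gaps.
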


We close this section by showing that the notion of $\Sigma_1$ formulas has a high affinity with modal arithmetic. 
The following theorem is proved by applying a schematic proof of formalized $\Sigma_1$-completeness theorem (see \cite{Buc93, Kur20,Rau10}). 
This is also implicitly stated in Friedman and Sheard (\cite{FS89}). 

\begin{thm}[Formalized $\Sigma_1$-completeness theorem]\label{S1compl}
For any $\Sigma_1$ formula $\varphi$, we have $\PA(\K) \vdash \varphi \to \Box \varphi$. 
\end{thm}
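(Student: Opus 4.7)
The plan is to proceed by external induction on the construction of the $\Sigma_1$ formula $\varphi$. For the $\Delta_0$ sub-induction it is convenient to strengthen the hypothesis and prove the symmetric statement that for every $\Delta_0$ formula $\psi$, both $\PA(\K) \vdash \psi \to \Box \psi$ and $\PA(\K) \vdash \neg \psi \to \Box \neg \psi$ hold; this symmetry makes the Boolean cases routine, since $\Box$ distributes over $\land$, $\lor$, and $\to$ in any normal modal theory.

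The base step requires extending the first clause of Proposition \ref{PAK} from equality to the remaining atomic predicates of $\mathcal{L}_A$ and their negations, namely
\[
\PA(\K) \vdash x \leq y \to \Box(x \leq y), \quad \PA(\K) \vdash x \neq y \to \Box(x \neq y), \quad \PA(\K) \vdash x \not\leq y \to \Box(x \not\leq y).
\]
Each of these is established by the same double-induction template as in the proof of Proposition \ref{PAK}: one lifts $\PA$-provable arithmetic facts under $\Box$ via \textsc{Nec}, and threads successors through using the $\K$-axiom together with the $\PA$-provable recursion clauses for $S$ and $\leq$. The second clause of Proposition \ref{PAK} then handles substitution of arbitrary $\mathcal{L}_A$-terms for free variables, so the statement is secured for every atomic $\mathcal{L}_A$-formula and its negation.

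The main obstacle is the bounded universal quantifier case. To show $\PA(\K) \vdash \forall x \leq y\, \psi(x) \to \Box\, \forall x \leq y\, \psi(x)$, I would argue inside $\PA(\K)$ by internal induction on $y$ applied to precisely this implication viewed as a single $\LB$-formula; the step relies essentially on $\PA_\Box$ including the induction scheme for $\LB$-formulas, hence for formulas containing $\Box$. At $y = 0$, the implication reduces via the $\K$-axiom to the outer induction hypothesis for $\psi(0)$; at a successor $S(y)$, the premise splits into $\forall x \leq y\, \psi(x)$ and $\psi(S(y))$, each conjunct is boxed using the inner and outer induction hypotheses respectively, and the $\K$-axiom combines them under $\Box$. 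The bounded existential quantifier only needs a witness, and the negations of bounded quantifiers are absorbed by the symmetric induction hypothesis. Finally, for the unbounded case $\exists x\, \psi(x)$, work inside $\PA(\K)$: pick a witness $a$, derive $\Box \psi(a)$ from the induction hypothesis, and conclude $\Box \exists x\, \psi(x)$ by applying \textsc{Nec} and the $\K$-axiom to the logical theorem $\psi(a) \to \exists x\, \psi(x)$.
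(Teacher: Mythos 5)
Your proof is correct, but it organizes the argument genuinely differently from the paper. The paper first normalizes: it shows that every $\Sigma_1$ formula is $\PA$-provably equivalent to a $\Sigma_1$ formula containing neither $\neg$ nor $\to$ and whose atomic subformulas are all equations (negated atomics are eliminated in favour of order atoms, which are then rewritten existentially), so that the subsequent structural induction needs only the single atomic case $t_1 = t_2$, supplied by Proposition \ref{PAK}.1, together with the positive connective and quantifier cases, which it borrows from the proof of Theorem \ref{SBcompl}. You instead keep the formula intact and double the induction hypothesis at the $\Delta_0$ level, proving $\psi \to \Box\psi$ and $\neg\psi \to \Box\neg\psi$ simultaneously; the price is three further base lemmas (for $\leq$, $\neq$, and $\not\leq$), each requiring its own double induction in the style of Proposition \ref{PAK}.1, and the payoff is that no normal-form preprocessing is needed. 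Both are standard proofs of formalized $\Sigma_1$-completeness, both use only $\K$ plus the $\LB$-induction scheme, and your treatment of the bounded universal quantifier by internal induction on the bound is exactly the paper's treatment in Theorem \ref{SBcompl}. Two small points of hygiene: the slogan that $\Box$ ``distributes over'' $\lor$ and $\to$ holds only in the directions you actually use, namely $\Box\varphi \lor \Box\psi \to \Box(\varphi \lor \psi)$ and $\Box(\varphi \to \psi) \to (\Box\varphi \to \Box\psi)$; and the passage from free variables to arbitrary $\mathcal{L}_A$-terms in the atomic base cases is most directly an appeal to the universal instantiation axiom scheme of $\PA_\Box$ (as in the paper's own proof) rather than to Proposition \ref{PAK}.2, though either route works in this framework.
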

\begin{proof}
Before proving the theorem, we show that for any $\Sigma_1$ formula $\psi$, there exists a $\Sigma_1$ formula $\psi'$ such that $\PA \vdash \psi \leftrightarrow \psi'$, $\psi'$ does not contain the connectives $\neg$ and $\to$, and every atomic formula contained in $\psi'$ is of the form $t_1 = t_2$ for some $\mathcal{L}_A$-terms $t_1$ and $t_2$. 
First, we easily find a $\Sigma_1$ formula $\psi_0$ without the connective $\to$ such that $\psi_0$ is logically equivalent to $\psi$ and every negation symbol $\neg$ in $\psi_0$ is applied to an atomic formula. 
Then, by replacing every negated atomic formula $\neg (t_1 = t_2)$ or $\neg (t_1 < t_2)$ of $\psi_1$ by $t_1 < t_2 \lor t_2 < t_1$ or $t_1 = t_2 \lor t_2 < t_1$ respectively, we obtain a $\PA$-equivalent $\Sigma_1$ formula $\psi_1$ without having $\neg$. 
Finally, by replacing every atomic formula $t_1 < t_2$ of $\psi_1$ by $\exists y(t_1 + S(y) = t_2)$, we obtain a required equivalent $\Sigma_1$ formula $\psi'$. 
Since $\PA(\K) \vdash \Box \psi \leftrightarrow \Box \psi'$, to prove the theorem, it suffices to show that $\PA(\K) \vdash \varphi \to \Box \varphi$ for any $\Sigma_1$ formula $\varphi$ such that it does not contain the connectives $\neg$ and $\to$, and that every atomic formula contained in $\varphi$ is of the form $t_1 = t_2$ for some $\mathcal{L}_A$-terms $t_1$ and $t_2$.

This is proved by induction on the construction of $\varphi$. 
If $\varphi$ is $t_1 = t_2$, then $\PA(\K) \vdash t_1 = t_2 \to \Box t_1 = t_2$ follows from $\PA(\K) \vdash x = y \to \Box x = y$ (Proposition \ref{PAK}) by substituting $t_1$ and $t_2$ into $x$ and $y$, respectively. 
The cases for $\land$, $\lor$, $\forall x < t$ and $\exists$ are proved as in the proof of Theorem \ref{SBcompl} below. 
\end{proof}

\section{Classes of $\LB$-formulas}\label{Sec:Classes}

In first-order arithmetic, it is important to classify $\mathcal{L}_A$-formulas according to the arithmetic hierarchy. 
In this section, we introduce three classes $\B$, $\DB$, and $\SB$ of $\LB$-formulas, and investigate basic properties of formulas in these classes. 
Our classes $\DB$ and $\SB$ are modal arithmetical counterparts of $\Delta_0$ and $\Sigma_1$, respectively.

\begin{definition}[$\B$, $\DB$ and $\SB$]\leavevmode
\begin{itemize}
	\item Let $\B$ be the class of all $\LB$-formulas of the form $\Box \varphi$. 
	\item Let $\DB$ be the smallest class of $\LB$-formulas satisfying the following conditions: 
\begin{enumerate}
	\item $\Delta_0 \cup \B \subseteq \DB$; 
	\item If $\varphi$ and $\psi$ are in $\DB$, then so are $\varphi \land \psi$, $\varphi \lor \psi$, $\forall x < t\, \varphi$ and $\exists x < t\, \varphi$, where $t$ is an $\mathcal{L}_A$-term in which $x$ does not occur. 
\end{enumerate}
	\item Let $\SB$ be the smallest class of $\LB$-formulas satisfying the following conditions: 
\begin{enumerate}
	\item $\Sigma_1 \cup \B \subseteq \SB$;  
	\item If $\varphi$ and $\psi$ are in $\SB$, then so are $\varphi \land \psi$, $\varphi \lor \psi$, $\exists x \varphi$ and $\forall x < t\, \varphi$, where $t$ is an $\mathcal{L}_A$-term in which $x$ does not occur. 
\end{enumerate}
\end{itemize}
\end{definition}

We emphasize here that in some sense the class $\SB$ is a natural extension of the class $\Sigma_1$. 
For each r.e.~theory $T$, let $\PR_T(x)$ be a fixed $\Sigma_1$ provability predicate of $T$. 
In the context of interpreting $\Box$ by $\PR_T(x)$, each $\LB$-formula of the form $\Box \varphi$ is interpreted by a $\Sigma_1$ formula, and hence every $\SB$ formula is also recognized as a $\Sigma_1$ formula. 
From this perspective, we will attempt to extend the properties possessed by $\Sigma_1$ formulas in first-order arithmetic to $\SB$ formulas in modal arithmetic.
Note, however, that $\DB$, unlike $\Delta_0$, is not closed under taking negation and implication.
For example, it can be shown that there is no $\DB$ sentence $\varphi$ such that $\PA(\K) \vdash \neg \Box \bot \leftrightarrow \varphi$ (see Corollary \ref{NonDB} below). 

The following proposition states that the relationship between $\SB$ and $\DB$ is similar to the relationship between $\Sigma_1$ and $\Delta_0$. 

\begin{prop}\label{MSandBD}
For any $\SB$ formula $\varphi$, there exist a variable $v$ and a $\DB$ formula $\psi$ such that $\PA_\Box \vdash \varphi \leftrightarrow \exists v \psi$. 
\end{prop}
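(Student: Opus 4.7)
The plan is to proceed by induction on the construction of the $\SB$ formula $\varphi$, mirroring the standard first-order normal form argument that every $\Sigma_1$ formula is $\PA$-equivalent to one of the form $\exists v\, \psi$ with $\psi \in \Delta_0$.

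For the base cases, if $\varphi \in \Sigma_1$, I invoke the usual normal form theorem for $\Sigma_1$: there is a variable $v$ and a $\Delta_0$ formula $\psi$ with $\PA \vdash \varphi \leftrightarrow \exists v\, \psi$, and $\Delta_0 \subseteq \DB$ gives the result. If $\varphi \in \B$, say $\varphi \equiv \Box \sigma$, then $\Box \sigma$ already lies in $\DB$, so I choose any variable $v$ not free in $\Box \sigma$ and use the trivial equivalence $\PA_\Box \vdash \Box \sigma \leftrightarrow \exists v\, \Box \sigma$.

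For the inductive cases, assume $\PA_\Box \vdash \varphi_i \leftrightarrow \exists v_i\, \psi_i$ with $\psi_i \in \DB$ for $i = 1,2$, choosing the $v_i$ fresh and distinct. For $\varphi_1 \land \varphi_2$ and $\varphi_1 \lor \varphi_2$, I pull the existentials out to obtain $\exists v_1 \exists v_2(\psi_1 \land \psi_2)$ and $\exists v_1 \exists v_2(\psi_1 \lor \psi_2)$ respectively, and then contract the two quantifiers into one by the standard pairing trick: if $\langle \cdot, \cdot \rangle$ is a $\PA$-definable pairing with projections $(v)_0, (v)_1$, then $\exists v_1 \exists v_2 \chi(v_1,v_2) \leftrightarrow \exists v\, \chi((v)_0,(v)_1)$ in $\PA_\Box$, and substituting the $\mathcal{L}_A$-terms $(v)_0,(v)_1$ into the $\DB$ formula preserves membership in $\DB$. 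For $\exists x\, \varphi_1$, the same pairing trick gives $\exists x \exists v_1\, \psi_1 \leftrightarrow \exists v\, \psi_1[(v)_0/x, (v)_1/v_1]$.

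The main obstacle is the case $\forall x < t\, \varphi_1$, which requires moving an unbounded existential out past a bounded universal. Here I use the $\Sigma$-collection scheme, which is provable in $\PA_\Box$ for arbitrary $\LB$ formulas because $\PA_\Box$ has full induction on $\LB$-formulas; the usual induction on the bound (applied to the formula $\forall y < x\, \exists z\, \psi_1 \to \exists w\, \forall y < x\, \exists z < w\, \psi_1$) yields
\[
\PA_\Box \vdash \forall x < t\, \exists v_1\, \psi_1 \leftrightarrow \exists w\, \forall x < t\, \exists v_1 < w\, \psi_1.
\]
Since $\DB$ is closed under bounded quantification, the matrix $\forall x < t\, \exists v_1 < w\, \psi_1$ is in $\DB$, and taking $v$ to be $w$ completes the inductive step. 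Throughout, I should also take mild care with variable renaming to ensure the fresh variables do not collide with $t$ or with the free variables of the $\psi_i$, but this is routine.
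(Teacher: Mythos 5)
Your proposal is correct and follows essentially the same route as the paper: induction on the construction of the $\SB$ formula, with the $\forall x < t$ case handled by the collection principle derived from full $\LB$-induction in $\PA_\Box$, exactly as in the paper. The only real divergence is in the $\land$, $\lor$, and $\exists x$ cases, where you contract two existentials via a pairing function; note that the projections $(v)_0, (v)_1$ are \emph{not} $\mathcal{L}_A$-terms in the language $\{0,S,+,\times,\leq,=\}$, so ``substituting the terms $(v)_0,(v)_1$ into a $\DB$ formula'' is not literally licensed by the closure conditions on $\DB$ --- you would instead write something like $\exists v\, \exists v_0 \leq v\, \exists v_1 \leq v\, (v = \langle v_0,v_1\rangle \land \cdots)$, using that $v = \langle v_0,v_1\rangle$ is $\Delta_0$ and that $\DB$ is closed under $\land$ and bounded quantification. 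The paper sidesteps pairing altogether by simply bounding both witnesses under one fresh existential, $\exists v\, \exists v_0 < v\, \exists v_1 < v\, (\psi_0 \circ \psi_1)$, which is the cleaner repair of exactly this point.
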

\begin{proof}
We prove the proposition by induction on the construction of $\varphi$. 

\begin{itemize}
	\item If $\varphi$ is $\Sigma_1$, then there exists a $\Delta_0$ formula $\psi$ such that $\PA \vdash \varphi \leftrightarrow \exists v \psi$. 

	\item If $\varphi$ is of the form $\Box \varphi_0$, then $\varphi \in \DB$ and $\PA_\Box \vdash \varphi \leftrightarrow \exists v \varphi$ for a variable $v$ not contained in $\varphi$. 

	\item Let $\circ \in \{\land, \lor\}$. 
	If $\varphi$ is of the form $\varphi_0 \circ \varphi_1$, then by the induction hypothesis, there exist distinct variables $v_0$ and $v_1$ and $\DB$ formulas $\psi_0$ and $\psi_1$ such that $\PA_\Box$ proves $\varphi_0 \leftrightarrow \exists v_0 \psi_0$ and $\varphi_1 \leftrightarrow \exists v_1 \psi_1$. 
Let $v$ be any variable that does not occur in $\psi_0$ or $\psi_1$, and is not $v_0$ or $v_1$. 
Then, $\PA_\Box$ proves the equivalence $\varphi \leftrightarrow \exists v \, \exists v_0 < v \, \exists v_1 < v \, (\psi_0 \circ \psi_1)$. 

	\item If $\varphi$ is of the form $\exists x \varphi_0$, then by the induction hypothesis, there exist a variable $v_0$ and a $\DB$ formula $\psi_0$ such that $\PA \vdash \varphi_0 \leftrightarrow \exists v_0 \psi_0$. 
	Let $v$ be any variable not contained in $\psi_0$ and is not $v_0$ or $x$. 
	Then, $\PA_\Box$ proves the equivalence $\varphi \leftrightarrow \exists v \, \exists x < v \, \exists v_0 < v \, \psi_0$. 

	\item The case that $\varphi$ is of the form $\exists x < t\, \varphi_0$, where $t$ is an $\mathcal{L}_A$-term in which $x$ does not occur is proved as in the proof of the case of $\exists$. 

	\item Suppose $\varphi$ is of the form $\forall x < t\, \varphi_0$, where $t$ is an $\mathcal{L}_A$-term in which $x$ does not occur. 
	By the induction hypothesis, there exists a variable $v_0$ and a $\DB$ formula $\psi_0$ such that $\PA_\Box \vdash \varphi_0 \leftrightarrow \exists v_0 \psi_0$. 
	Then, $\PA_\Box \vdash \varphi \leftrightarrow \forall x < t\, \exists v_0 \psi_0$. 
	By the collection principle for $\LB$-formulas derived from the induction axioms for $\LB$-formulas, we obtain
\[
	 \PA_\Box \vdash \varphi \leftrightarrow \exists v\, \forall x < t\, \exists v_0 < v\, \psi_0
\]
for some appropriate variable $v$. 
\end{itemize}
\end{proof}

\begin{prop}\label{DBSent}
For any $\DB$ sentence $\varphi$, there exist a natural number $k$ and sentences $\psi_0, \ldots, \psi_{k-1}$ such that $\PA_\Box \vdash \varphi \leftrightarrow \bigvee_{i < k} \Box \psi_i$. 
Here $\bigvee_{i < 0} \Box \psi_i$ denotes $\bot$.
\end{prop}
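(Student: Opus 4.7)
The plan is to proceed by induction on the construction of the $\DB$-sentence $\varphi$. There are two base cases. When $\varphi$ is a closed $\Delta_0$ sentence, it is decidable in $\PA$: if $\PA \vdash \varphi$, take $k = 1$ with $\psi_0 := 0 = 0$, using that $\PA_\Box \vdash \Box(0 = 0)$ by \textsc{Nec}; if $\PA \vdash \neg\varphi$, take $k = 0$ and invoke the convention that the empty disjunction denotes $\bot$. When $\varphi \equiv \Box \psi$ lies in $\B$, take $k = 1$ and $\psi_0 := \psi$ directly.

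For the inductive step, the $\lor$-case is immediate: concatenate the two disjunctions of $\Box$-sentences supplied by the induction hypothesis. For the bounded-quantifier cases $\exists x < t \, \varphi_0(x)$ and $\forall x < t \, \varphi_0(x)$, the term $t$ is closed, so $\PA$ proves $t = \overline{n}$ for the numerical value $n$ of $t$, and the quantifier reduces in $\PA$ to the finite disjunction $\bigvee_{i < n} \varphi_0(\overline{i})$ or conjunction $\bigwedge_{i < n} \varphi_0(\overline{i})$; each $\varphi_0(\overline{i})$ is a $\DB$-sentence of smaller construction, so the induction hypothesis applies and we are reduced to the propositional $\lor$ and $\land$ cases. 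For $\varphi \equiv \varphi_1 \land \varphi_2$, the induction hypothesis gives $\varphi_1 \leftrightarrow \bigvee_i \Box \alpha_i$ and $\varphi_2 \leftrightarrow \bigvee_j \Box \beta_j$, and distributing yields $\varphi \leftrightarrow \bigvee_{i,j}(\Box \alpha_i \land \Box \beta_j)$; I would then collapse each pair $\Box \alpha_i \land \Box \beta_j$ to the single $\Box$-sentence $\Box(\alpha_i \land \beta_j)$.

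The main obstacle is precisely this final collapse, which rests on the equivalence $\Box \alpha \land \Box \beta \leftrightarrow \Box(\alpha \land \beta)$. Both directions follow from the $\K$-axiom applied to the propositional tautologies $\alpha \land \beta \to \alpha$, $\alpha \land \beta \to \beta$, and $\alpha \to (\beta \to \alpha \land \beta)$ after necessitating them; however, this is not derivable from \textsc{Nec} alone in $\PA_\Box$. Given that Theorem \ref{S1compl} is stated over $\PA(\K)$ and that the $\K$-axiom is exactly what the $\land$-case requires, I would read the intended background theory here as (at least) $\PA(\K)$, under which the sketch goes through uniformly.
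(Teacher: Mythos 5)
Your argument is correct and is essentially the paper's own proof: the same induction on the construction of $\varphi$, with closed $\Delta_0$ sentences decided in $\PA$, bounded quantifiers over closed terms unwound into finite disjunctions and conjunctions, and the conjunction case handled by distributing and collapsing each $\Box \xi_i \land \Box \eta_j$ to $\Box(\xi_i \land \eta_j)$. The obstacle you flag at the end is real but applies equally to the paper, whose proof asserts $\PA_\Box \vdash \varphi \leftrightarrow \bigvee_{i < k} \bigvee_{j < l} \Box(\xi_i \land \eta_j)$ even though $\PA_\Box$ as defined has only \textsc{Nec} and no $\K$-axiom, so that neither direction of $\Box \alpha \land \Box \beta \leftrightarrow \Box(\alpha \land \beta)$ is derivable there; your reading of the base theory as $\PA(\K)$ is the right repair, and it is harmless for the paper's downstream uses, which ultimately concern extensions of $\PA(\K)$ or $\PA(\KF)$.
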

\begin{proof}
We prove the proposition by induction on the construction of $\varphi$. 

\begin{itemize}
	\item If $\varphi$ is a $\Delta_0$ sentence, then either $\PA \vdash \varphi$ or $\PA \vdash \neg \varphi$. 
	Thus, $\PA_\Box \vdash \varphi \leftrightarrow \Box 0=0$ or $\PA_\Box \vdash \varphi \leftrightarrow \bot$. 

	\item If $\varphi$ is of the form $\Box \psi$, then the statement is trivial. 

	\item 	If $\varphi$ is of the form $\psi \land \sigma$, then there exist sentences $\xi_0, \ldots, \xi_{k-1}, \eta_0, \ldots, \eta_{l-1}$ such that $\PA_\Box \vdash \psi \leftrightarrow \bigvee_{i < k} \Box \xi_i$ and  $\PA_\Box \vdash \sigma \leftrightarrow \bigvee_{j < l} \Box \eta_j$. 
	Then, $\PA_\Box \vdash \varphi \leftrightarrow \bigvee_{i < k} \bigvee_{j < l} \Box(\xi_i \land \eta_j)$. 
	
	\item 	If $\varphi$ is of the form $\psi \lor \sigma$, then the statement is obvious by the induction hypothesis. 
	
	\item If $\varphi$ is of the form $\exists y < t\, \psi(y)$ for some $\mathcal{L}_A$-term $t$, then $t$ is a closed term because $\varphi$ is a sentence. 
	Let $m$ be the value of $t$, then $\PA_\Box \vdash \varphi \leftrightarrow \bigvee_{i < m} \psi(\overline{i})$. 
	Then, the statement holds by the induction hypothesis. 
	
	\item If $\varphi$ is of the form $\forall y < t\, \psi(y)$ for some closed $\mathcal{L}_A$-term $t$, then for the value $m$ of the term $t$, $\PA_\Box \vdash \varphi \leftrightarrow \bigwedge_{i < m} \psi(\overline{i})$. 
	We can prove the statement by the induction hypothesis as in the proof of the case $\land$. 
\end{itemize}
\end{proof}

\begin{cor}\label{NonDB}
Let $T$ be a theory extending $\PA(\K)$ such that $T \nvdash \Box \bot$ and $T \nvdash \neg \Box \bot$. 
Then, there is no $\DB$ sentence $\varphi$ such that $T \vdash \neg \Box \bot \leftrightarrow \varphi$.  
\end{cor}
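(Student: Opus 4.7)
The plan is to proceed by contradiction, using Proposition \ref{DBSent} to put any alleged $\DB$ sentence into the very restricted normal form $\bigvee_{i<k}\Box\psi_i$ and then show that this form is incompatible with the two independence hypotheses on $\Box\bot$.

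Assume for contradiction that some $\DB$ sentence $\varphi$ satisfies $T\vdash \neg\Box\bot \leftrightarrow \varphi$. By Proposition \ref{DBSent} there are sentences $\psi_0,\ldots,\psi_{k-1}$ with $\PA_\Box \vdash \varphi \leftrightarrow \bigvee_{i<k}\Box\psi_i$, and hence
\[
T \vdash \neg\Box\bot \,\leftrightarrow\, \bigvee_{i<k}\Box\psi_i.
\]
I would then split on $k$. If $k=0$, the right-hand side is $\bot$ by the convention from Proposition \ref{DBSent}, so $T\vdash \Box\bot$, contradicting $T\nvdash\Box\bot$.

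If $k\ge 1$, the main point is that in $\PA(\K)$ the formula $\Box\bot$ implies each $\Box\psi_i$: since $\bot\to\psi_i$ is a propositional tautology, \textsc{Nec} gives $\vdash \Box(\bot\to\psi_i)$, and then the $\K$-axiom yields $\PA(\K)\vdash \Box\bot\to\Box\psi_i$. On the other hand, the right-to-left direction of the assumed equivalence gives $T\vdash \Box\psi_i\to\neg\Box\bot$ for each individual $i<k$. Composing these, $T\vdash \Box\bot\to\neg\Box\bot$, and by classical logic $T\vdash \neg\Box\bot$, contradicting $T\nvdash\neg\Box\bot$.

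There is no real obstacle: the only substantive ingredient beyond Proposition \ref{DBSent} is the basic $\K$-reasoning $\PA(\K)\vdash \Box\bot\to\Box\psi$ for arbitrary $\psi$, and the two hypotheses on $T$ are used in exactly the two cases $k=0$ and $k\ge 1$. It is worth noting that the assumption $T\vdash \PA(\K)$ (rather than merely $T\vdash \PA_\Box$) is genuinely used in the second case, through the K-axiom step.
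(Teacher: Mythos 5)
Your proposal is correct and follows essentially the same route as the paper: reduce $\varphi$ to $\bigvee_{i<k}\Box\psi_i$ via Proposition \ref{DBSent}, rule out $k=0$ using $T \nvdash \Box\bot$, and for $k\ge 1$ combine $T \vdash \Box\psi_0 \to \neg\Box\bot$ with the $\PA(\K)$-fact $\Box\bot \to \Box\psi_0$ to derive $T \vdash \neg\Box\bot$. The only cosmetic difference is that the paper phrases the last step as a case split on $\Box\psi_0$ versus $\neg\Box\psi_0$, whereas you compose the two implications directly; these are the same argument.
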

\begin{proof}
Suppose, towards a contradiction, that $\varphi$ is a $\DB$ sentence such that $T \vdash \neg \Box \bot \leftrightarrow \varphi$. 
By Proposition \ref{DBSent}, there exist $k$ and $\psi_0, \ldots, \psi_{k-1}$ such that $\PA_{\Box} \vdash \varphi \leftrightarrow \bigvee_{i < k} \Box \psi_i$. 
Then, $T \vdash \neg \Box \bot \leftrightarrow \bigvee_{i < k} \Box \psi_i$. 
Since $T \nvdash \Box \bot$, we get $k > 0$. 
Then, $T \vdash \Box \psi_0 \to \neg \Box \bot$. 
On the other hand, $T \vdash \neg \Box \psi_0 \to \neg \Box \bot$ because $T$ is an extension of $\PA(\K)$. 
Therefore, we obtain $T \vdash \neg \Box \bot$. 
This is a contradiction. 
\end{proof}

We naturally extend Theorem \ref{S1compl} into the framework of modal arithmetic.

\begin{thm}[Formalized $\SB$-completeness theorem]\label{SBcompl}
For any $\varphi \in \SB$, $\PA(\KF) \vdash \varphi \to \Box \varphi$. 
\end{thm}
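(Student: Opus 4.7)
The plan is to prove the statement by induction on the construction of the $\SB$ formula $\varphi$, mirroring the inductive definition of $\SB$. The entire argument takes place inside $\PA(\KF)$, but as I will note, the axiom 4 of $\KF$ is only needed in one base case.

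For the base cases, if $\varphi \in \Sigma_1$, then $\PA(\K) \vdash \varphi \to \Box \varphi$ is exactly Theorem \ref{S1compl}. If $\varphi \in \B$, say $\varphi \equiv \Box \psi$, then $\PA(\KF) \vdash \Box \psi \to \Box \Box \psi$ is an instance of the $\KF$-axiom (this is the one place where the 4 scheme is essential). For the inductive cases $\land$ and $\lor$, assume $\PA(\KF) \vdash \varphi_i \to \Box \varphi_i$ for $i \in \{0,1\}$. Using \textsc{Nec} together with the normality axioms one obtains $\PA(\K) \vdash \Box \varphi_0 \land \Box \varphi_1 \to \Box(\varphi_0 \land \varphi_1)$ and $\PA(\K) \vdash \Box \varphi_0 \lor \Box \varphi_1 \to \Box(\varphi_0 \lor \varphi_1)$, and the desired implications follow.

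For the unbounded existential case $\varphi \equiv \exists x\, \varphi_0$, assume inductively $\PA(\KF) \vdash \varphi_0(x) \to \Box \varphi_0(x)$. Since $\PA \vdash \varphi_0(x) \to \exists x \varphi_0(x)$, necessitation and the $\K$ axiom give $\PA(\K) \vdash \Box \varphi_0(x) \to \Box \exists x \varphi_0(x)$, so \textsc{Gen} yields $\PA(\KF) \vdash \exists x \varphi_0(x) \to \Box \exists x \varphi_0(x)$. The main obstacle is the bounded universal case $\varphi \equiv \forall x < t\, \varphi_0(x)$, where one must pass from $\forall x < t\, \Box \varphi_0(x)$ to $\Box \forall x < t\, \varphi_0(x)$. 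My plan is to reduce to the case that $t$ is a variable $y$ and then prove the auxiliary statement
\[
\PA(\KF) \vdash \forall y\bigl(\forall x < y(\varphi_0(x) \to \Box \varphi_0(x)) \to (\forall x < y\, \varphi_0(x) \to \Box \forall x < y\, \varphi_0(x))\bigr)
\]
by induction on $y$, which is available since the $\LB$-induction scheme is included in $\PA_\Box$. The base case $y = 0$ uses $\PA \vdash \forall x < 0\,\varphi_0(x)$ together with \textsc{Nec}. For the successor step, $\PA \vdash \forall x < y+1\, \varphi_0(x) \leftrightarrow \forall x < y\,\varphi_0(x) \land \varphi_0(y)$ is provably necessary, so combining the induction hypothesis with the assumption $\varphi_0(y) \to \Box\varphi_0(y)$ and the $\K$-reasoning used above yields the required implication.

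The only delicate point is that the auxiliary induction statement is an $\LB$-formula (it contains $\Box$), so one must invoke the induction axioms of $\PA_\Box$ rather than those of $\PA$; this is exactly what $\PA_\Box$ was set up to provide. I expect the write-up to dispatch the two base cases and the propositional and $\exists$ cases in a line or two each, and to handle the bounded universal case by citing the analogous collection-style argument already used in the proof of Proposition \ref{MSandBD}.
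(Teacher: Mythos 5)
Your proposal is correct and follows essentially the same route as the paper: induction on the structure of $\varphi$, with the 4-axiom used only in the $\B$ base case, $\K$-reasoning for $\land$, $\lor$, and $\exists$, and an internal induction on the bound $y$ (via the $\LB$-induction scheme of $\PA_\Box$) to handle $\forall x < y\,\varphi_0$, followed by substitution of $t$ for $y$. The only cosmetic difference is that the paper inducts on the formula $\forall x<y\,\varphi_0 \to \Box\forall x<y\,\varphi_0$ directly, using the structural induction hypothesis as a free-variable theorem, whereas you carry it as an explicit antecedent of the formula being inducted on; both work.
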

\begin{proof}
We prove the theorem by induction on the construction of $\varphi$. 
\begin{itemize}
	\item If $\varphi$ is a $\Sigma_1$ formula, then $\PA(\K) \vdash \varphi \to \Box \varphi$ by Theorem \ref{S1compl}. 

	\item If $\varphi$ is of the form $\Box \psi$, then $\PA(\KF) \vdash \varphi \to \Box \varphi$. 

	\item If $\varphi$ is $\psi \land \sigma$, then by the induction hypothesis, $\PA(\KF) \vdash \varphi \to \Box \psi \land \Box \sigma$. 
We have $\PA(\KF) \vdash \varphi \to \Box \varphi$. 

	\item If $\varphi$ is $\psi \lor \sigma$, then by the induction hypothesis, $\PA(\KF)$ proves $\psi \to \Box (\psi \lor \sigma)$ and $\sigma \to \Box (\psi \lor \sigma)$. 
Hence, $\PA(\KF) \vdash \varphi \to \Box \varphi$. 

	\item Suppose that $\varphi$ is $\exists x \psi$. 
Since $\PA(\K) \vdash \psi \to \exists x \psi$, we have $\PA(\K) \vdash \Box \psi \to \Box \exists x \psi$. 
By the induction hypothesis, $\PA(\K) \vdash \psi \to \Box \psi$. 
Thus, $\PA(\K) \vdash \psi \to \Box \exists x \psi$. 
Then, we obtain $\PA(\KF) \vdash \varphi \to \Box \varphi$. 

	\item Before proving the case that $\varphi$ is of the form $\forall x < t\, \psi(x)$ generally, we prove the restricted case that $t$ is some variable $y$ not occurring in $\psi$. 
	Suppose that $\varphi(y)$ is of the form $\forall x < y\, \psi(x)$ for some variable $y$ not occurring in $\psi(x)$. 
	Let $\xi(y)$ be the formula $\varphi(y) \to \Box \varphi(y)$, and then we prove $\PA(\K) \vdash \forall y \xi(y)$ by using the induction axiom. 

	For the base step, since trivially $\PA(\K) \vdash \forall x < 0\, \psi(x)$, we have $\PA(\K) \vdash \Box \varphi(0)$ and hence $\PA(\K) \vdash \xi(0)$. 

	For the induction step, since 
\[
	\PA(\K) \vdash [\forall x < S(y)\, \psi(x)] \leftrightarrow [(\forall x < y\, \psi(x)) \land \psi(y)], 
\]
we have 
\begin{equation}\label{eq2}
	\PA(\K) \vdash \varphi(S(y)) \leftrightarrow [\varphi(y) \land \psi(y)].
\end{equation} 
By the induction hypothesis, $\PA(\KF) \vdash \psi(y) \to \Box \psi(y)$. 
By combining this with (\ref{eq2}) and the definition of $\xi(y)$, 
\[
	\PA(\KF) \vdash \xi(y) \land \varphi(S(y)) \to \Box \varphi(y) \land \Box \psi(y). 
\]
Then, by (\ref{eq2}) again, 
\[
	\PA(\KF) \vdash \xi(y) \land \varphi(S(y)) \to \Box \varphi(S(y)).
\] 
Equivalently, $\PA(\KF) \vdash \xi(y) \to \xi(S(y))$. 

Therefore, by the induction axiom, we conclude $\PA(\KF) \vdash \forall y \xi(y)$. 

Finally, suppose that $\varphi$ is of the form $\forall x < t\, \psi$ for some $\mathcal{L}_A$-term $t$. 
We have already proved that $\PA(\KF)$ proves $\forall x < y\, \psi \to \Box \forall x < y\, \psi$ for some variable $y$ not occurring in $\psi$. 
By substituting $t$ for $y$ in this formula, we obtain $\PA(\KF) \vdash \varphi \to \Box \varphi$. 
\end{itemize}
\end{proof}

\begin{cor}[$\SB$-deduction theorem]\label{SBDT}
Let $T$ be any extension of $\PA(\KF)$ and let $X$ be any set of $\SB$ sentences. 
Then, for any $\LB$-formula $\varphi$, if $T + X \vdash \varphi$, then there exist $\sigma_0, \ldots, \sigma_{k-1} \in X$ such that $T \vdash \sigma_0 \land \cdots \land \sigma_{k-1} \to \varphi$. 
\end{cor}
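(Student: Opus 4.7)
The plan is to prove the $\SB$-deduction theorem by a standard induction on the length of a derivation of $\varphi$ in $T + X$, with the nontrivial point being the handling of the necessitation rule, which is exactly where the hypothesis $X \subseteq \SB$ and the assumption $T \supseteq \PA(\KF)$ are used (via Theorem \ref{SBcompl}).

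In the base cases, if $\varphi$ is a (logical, induction, modal, or $\PA$) axiom of $T$, then $T \vdash \varphi$, and we can take the vacuous empty conjunction (using the convention that an empty conjunction is $\top$, or alternatively conjoining any single $\sigma \in X$ if $X$ is nonempty, and treating the case $X = \emptyset$ separately since then $T \vdash \varphi$). If $\varphi \in X$, set $k = 1$ and $\sigma_0 :\equiv \varphi$; then $T \vdash \sigma_0 \to \varphi$ trivially.

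For the inductive cases, MP is handled exactly as in the classical deduction theorem: if $\varphi$ follows from $\psi$ and $\psi \to \varphi$, the induction hypothesis yields finite lists $\vec{\sigma}, \vec{\tau}$ in $X$ with $T \vdash \bigwedge \vec{\sigma} \to \psi$ and $T \vdash \bigwedge \vec{\tau} \to (\psi \to \varphi)$, so $T \vdash \bigwedge \vec{\sigma} \land \bigwedge \vec{\tau} \to \varphi$. For \textsc{Gen}, if $\varphi \equiv \forall x\, \psi$ is derived from $\psi$, the induction hypothesis gives $\vec{\sigma}$ with $T \vdash \bigwedge \vec{\sigma} \to \psi$; since the elements of $X$ are sentences, $x$ is not free in $\bigwedge \vec{\sigma}$, and ordinary generalization produces $T \vdash \bigwedge \vec{\sigma} \to \forall x\, \psi$.

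The main step is \textsc{Nec}. Suppose $\varphi \equiv \Box \psi$ is derived from $\psi$. By the induction hypothesis, pick $\sigma_0, \ldots, \sigma_{k-1} \in X$ with $T \vdash \sigma_0 \land \cdots \land \sigma_{k-1} \to \psi$. Applying \textsc{Nec} and the axioms of $\K$ available in $\PA(\KF)$, we obtain
\[
T \vdash \Box \sigma_0 \land \cdots \land \Box \sigma_{k-1} \to \Box \psi.
\]
Since each $\sigma_i$ is in $\SB$, Theorem \ref{SBcompl} gives $T \vdash \sigma_i \to \Box \sigma_i$, and chaining these implications yields $T \vdash \sigma_0 \land \cdots \land \sigma_{k-1} \to \Box \psi$, as desired. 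Without the $\SB$-completeness theorem this step would be the obstacle; this is precisely the point of restricting the extra axioms $X$ to $\SB$ sentences and the base theory to (extensions of) $\PA(\KF)$.
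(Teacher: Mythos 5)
Your proof is correct and follows essentially the same route as the paper: induction on the length of the derivation in $T + X$, with the only nontrivial case being \textsc{Nec}, where one passes from $T \vdash \sigma_0 \land \cdots \land \sigma_{k-1} \to \psi$ to $T \vdash \Box\sigma_0 \land \cdots \land \Box\sigma_{k-1} \to \Box\psi$ and then discharges the boxes via Theorem \ref{SBcompl}. The paper presents exactly this \textsc{Nec} step and omits the routine cases, which you have simply spelled out in more detail.
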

\begin{proof}
This is proved by induction on the length of a proof of $\varphi$ in $T + X$. 
We only give a proof of the case that $\varphi$ is derived from $\psi$ by the rule \textsc{Nec}. 
Then, $\varphi$ is of the form $\Box \psi$. 
By the induction hypothesis, $T \vdash \sigma_0 \land \cdots \land \sigma_{k-1} \to \psi$ for some $\sigma_0, \ldots, \sigma_{k-1} \in X$. 
Then, $T \vdash \Box \sigma_0 \land \cdots \land \Box \sigma_{k-1} \to \Box \psi$. 
By Theorem \ref{SBcompl}, $T \vdash \sigma_0 \land \cdots \land \sigma_{k-1} \to \Box \psi$. 
\end{proof}

\section{$\B$-$\DP$, $\DB$-$\DP$ and related properties}\label{Sec:B-DB}

We introduce several versions of the partial disjunction property. 

\begin{definition}
Let $T$ be a theory and let $\Gamma$ and $\Theta$ be classes of formulas. 
\begin{itemize}
	\item $T$ is said to have the \textit{modal disjunction property} \textup{(}$\MDP$\textup{)} if for any sentences $\varphi$ and $\psi$, if $T \vdash \Box \varphi \lor \Box \psi$, then $T \vdash \varphi$ or $T \vdash \psi$. 

	\item $T$ is said to have the \textit{modal existence property} \textup{(}$\MEP$\textup{)} if for any formula $\varphi(x)$ that has no free variables except $x$, if $T \vdash \exists x \Box \varphi(x)$, then for some natural number $n$, $T \vdash \varphi(\overline{n})$. 

	\item $T$ is said to have the \textit{$\Gamma$-disjunction property} \textup{(}$\Gamma$-$\DP$\textup{)} if for any $\Gamma$ sentences $\varphi$ and $\psi$, if $T \vdash \varphi \lor \psi$, then $T \vdash \varphi$ or $T \vdash \psi$. 

	\item $T$ is said to have the \textit{$\Gamma$-existence property} \textup{(}$\Gamma$-$\EP$\textup{)} if for any $\Gamma$ formula $\varphi(x)$ that has no free variables except $x$, if $T \vdash \exists x \varphi(x)$, then for some natural number $n$, $T \vdash \varphi(\overline{n})$. 

	\item $T$ is said to have the \textit{$(\Gamma, \Theta)$-disjunction property} \textup{(}$(\Gamma, \Theta)$-$\DP$\textup{)} if for any $\Gamma$ sentence $\varphi$ and any $\Theta$ sentence $\psi$, if $T \vdash \varphi \lor \psi$, then $T \vdash \varphi$ or $T \vdash \psi$. 
	
	\item For $n \geq 2$, $T$ is said to have the \textit{$n$-fold $\B$-disjunction property} \textup{(}$\B^n$-$\DP$\textup{)} if for any $\LB$-sentences $\varphi_1, \ldots, \varphi_n$, if $T \vdash \Box \varphi_1 \lor \cdots \lor \Box \varphi_n$, then $T \vdash \Box \varphi_i$ for some $i$ $(1 \leq i \leq n)$. 

	\item If $T$ is r.e., then $T$ is said to be \textit{$\Gamma$-disjunctively correct} \textup{(}$\Gamma$-$\DC$\textup{)} if for any $\Gamma$ sentence $\varphi$, if $T \vdash \varphi \lor \PR_T(\gn{\varphi})$, then $T \vdash \varphi$. 

	\item We say that $T$ is closed under the \textit{box elimination rule} if for any sentence $\varphi$, if $T \vdash \Box \varphi$, then $T \vdash \varphi$. 
\end{itemize}
\end{definition}

Here $\PR_T(x)$ is a fixed natural $\Sigma_1$ provability predicate of $T$. 
We also fix a primitive recursive proof predicate $\Prf_T(x, y)$ of $T$ saying that $y$ encodes a $T$-proof of $x$, whose existence is guaranteed by Craig's trick. 

Of course, $(\Gamma, \Gamma)$-$\DP$ and $\B^2$-$\DP$ are exactly $\Gamma$-$\DP$ and $\B$-$\DP$, respectively. 
The notion of $\Gamma$-$\DC$ was introduced in \cite{Kur18}. 
It is known that for any consistent r.e.~extension $T$ of $\PA$, $T$ is $\Sigma_1$-$\DC$ if and only if $T$ is $\Sigma_1$-sound (cf.~\cite{Kur18}). 

\begin{prop}\label{BDP}
Let $T$ be any extension of $\PA_\Box$. 
\begin{enumerate}
	\item For any $n \geq 2$, if $T$ has $\B^{n+1}$-$\DP$, then $T$ also has $\B^n$-$\DP$; 
	\item $T$ has $\B^n$-$\DP$ for all $n \geq 2$ if and only if $T$ has $\DB$-$\DP$.  
\end{enumerate}
\end{prop}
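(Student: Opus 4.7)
The plan for part 1 is a simple padding argument. Suppose $T$ has $\B^{n+1}$-$\DP$ and $T \vdash \Box\varphi_1 \lor \cdots \lor \Box\varphi_n$. Then trivially $T \vdash \Box\varphi_1 \lor \cdots \lor \Box\varphi_n \lor \Box\varphi_n$, which is an $(n{+}1)$-fold disjunction of $\B$ sentences, so applying $\B^{n+1}$-$\DP$ yields $T \vdash \Box\varphi_i$ for some $i$.

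For the forward direction of part 2 (from $\DB$-$\DP$ to $\B^n$-$\DP$ for every $n \geq 2$), I would argue by induction on $n$. The base case $n = 2$ is immediate because every $\B$ sentence is a $\DB$ sentence. For the inductive step, given $T \vdash \Box\varphi_1 \lor \cdots \lor \Box\varphi_{n+1}$, regard it as the disjunction of the two $\DB$ sentences $\Box\varphi_1$ and $\Box\varphi_2 \lor \cdots \lor \Box\varphi_{n+1}$; apply $\DB$-$\DP$, and in the latter case appeal to the induction hypothesis for $n$.

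For the converse, the crucial tool is Proposition \ref{DBSent}, which rewrites each $\DB$ sentence as a finite disjunction of boxed sentences. Given $T \vdash \varphi \lor \psi$ with $\varphi, \psi \in \DB$, I would invoke Proposition \ref{DBSent} to obtain numbers $k, l$ and sentences $\psi_0, \ldots, \psi_{k-1}, \sigma_0, \ldots, \sigma_{l-1}$ with $\PA_\Box \vdash \varphi \leftrightarrow \bigvee_{i<k}\Box\psi_i$ and $\PA_\Box \vdash \psi \leftrightarrow \bigvee_{j<l}\Box\sigma_j$. Hence $T \vdash \bigvee_{i<k}\Box\psi_i \lor \bigvee_{j<l}\Box\sigma_j$. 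If $k + l \geq 2$, apply $\B^{k+l}$-$\DP$ to conclude that either some $T \vdash \Box\psi_i$, whence $T \vdash \varphi$, or some $T \vdash \Box\sigma_j$, whence $T \vdash \psi$.

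The argument is essentially bookkeeping, with no serious obstacle. The only subtlety is the degenerate situation where $k = 0$ or $l = 0$ (the empty disjunction being $\bot$); in the former case $\PA_\Box \vdash \varphi \leftrightarrow \bot$ forces $T \vdash \psi$, and the latter is symmetric. These boundary cases must be dispatched separately before the application of $\B^{k+l}$-$\DP$, since that property as stated requires at least two disjuncts.
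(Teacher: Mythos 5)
Your proposal is correct and follows essentially the same route as the paper's proof: the padding argument for part 1, induction on $n$ via Proposition \ref{DBSent}-free splitting for the direction from $\DB$-$\DP$ to $\B^n$-$\DP$ (the paper peels off the last disjunct where you peel off the first, which is immaterial), and Proposition \ref{DBSent} plus the degenerate cases $k=0$ or $l=0$ for the converse. No gaps.
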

\begin{proof}
1. Let $\varphi_1, \ldots, \varphi_n$ be any sentences such that $T \vdash \Box \varphi_1 \lor \cdots \lor \Box \varphi_n$. 
Then, $T \vdash \Box \varphi_1 \lor \cdots \lor \Box \varphi_n \lor \Box \varphi_n$. 
By $\B^{n+1}$-$\DP$, for some $i$ ($1 \leq i \leq n$), we have $T \vdash \Box \varphi_i$. 

2. $(\Rightarrow)$: Let $\varphi$ and $\psi$ be any $\DB$ sentences such that $T \vdash \varphi \lor \psi$. 
By Proposition \ref{DBSent}, there exist sentences $\varphi_0, \ldots, \varphi_{k-1}$ and $\psi_0, \ldots, \psi_{l-1}$ such that $T \vdash \varphi \leftrightarrow \bigvee_{i < k} \Box \varphi_i$ and $T \vdash \psi \leftrightarrow \bigvee_{j < l} \Box \psi_j$. 
Then, $T \vdash \bigvee_{i < k} \Box \varphi_i \lor \bigvee_{j < l} \Box \psi_j$. 
If $k = 0$ or $l = 0$, then we easily obtain $T \vdash \varphi$ or $T \vdash \psi$. 
Thus, we may assume both $k$ and $l$ are larger than $0$. 
Then, $k+l \geq 2$. 
By $\B^{k+l}$-$\DP$, there exists $i < k$ or $j < l$ such that $T \vdash \Box \varphi_i$ or $T \vdash \Box \psi_j$. 
Then, we obtain that $T \vdash \varphi$ or $T \vdash \psi$. 

$(\Leftarrow)$: We prove this implication by induction on $n \geq 2$. 
Since $\B \subseteq \DB$, $T$ has $\B^2$-$\DP$. 
Suppose that $T$ has $\B^n$-$\DP$ and we would like to prove that $T$ also has $\B^{n+1}$-$\DP$. 
Let $\varphi_1, \ldots, \varphi_n, \varphi_{n+1}$ be any sentences such that $T \vdash \Box \varphi_1 \lor \cdots \lor \Box \varphi_n \lor \Box \varphi_{n+1}$. 
Since both $\Box \varphi_1 \lor \cdots \lor \Box \varphi_n$ and $\Box \varphi_{n+1}$ are $\DB$ sentences, we have $T \vdash \Box \varphi_1 \lor \cdots \lor \Box \varphi_n$ or $T \vdash \Box \varphi_{n+1}$ by $\DB$-$\DP$. 
In the former case, we obtain $T \vdash \Box \varphi_i$ for some $i$ ($1 \leq i \leq n$) by the induction hypothesis. 
We have proved that $T$ has $\B^{n+1}$-$\DP$. 
\end{proof}

The following proposition is immediate from the definitions. 

\begin{prop}\label{DPEPb}
Let $T$ be any $\LB$-theory. 
\begin{enumerate}
	\item $T$ has $\MDP$ if and only if $T$ has $\B$-$\DP$ and is closed under the box elimination rule; 
	\item $T$ has $\MEP$ if and only if $T$ has $\B$-$\EP$ and is closed under the box elimination rule. 
\end{enumerate}
\end{prop}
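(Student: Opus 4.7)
The plan is to prove the two clauses in parallel, since they have essentially the same structure: one direction is an immediate composition of the two conjuncts, and the other direction splits into (a) deriving $\B$-$\DP$ (resp. $\B$-$\EP$) from $\MDP$ (resp. $\MEP$) by necessitation, and (b) deriving closure under the box elimination rule by a trivial duplication/dummy-variable trick.

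For the $(\Leftarrow)$ directions, I would simply chain the two hypotheses: given $T \vdash \Box \varphi \lor \Box \psi$, apply $\B$-$\DP$ to obtain $T \vdash \Box \chi$ for some $\chi \in \{\varphi,\psi\}$, then apply closure under the box elimination rule to conclude $T \vdash \chi$. For clause (2), given $T \vdash \exists x\, \Box \varphi(x)$, apply $\B$-$\EP$ to obtain $T \vdash \Box \varphi(\overline{n})$ for some $n$, and then apply closure under the box elimination rule.

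For the $(\Rightarrow)$ directions, the derivation of $\B$-$\DP$ from $\MDP$ (and of $\B$-$\EP$ from $\MEP$) is immediate: from the hypothesis $T \vdash \Box \varphi \lor \Box \psi$ (resp. $T \vdash \exists x\,\Box\varphi(x)$), $\MDP$ (resp. $\MEP$) yields $T \vdash \varphi$ or $T \vdash \psi$ (resp. $T \vdash \varphi(\overline{n})$), and then the rule \textsc{Nec} puts the box back to give $T \vdash \Box \varphi$ or $T \vdash \Box \psi$ (resp. $T \vdash \Box \varphi(\overline{n})$). For closure under the box elimination rule in clause (1), given $T \vdash \Box \varphi$ we get $T \vdash \Box \varphi \lor \Box \varphi$ and then apply $\MDP$. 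For clause (2), given a sentence $\varphi$ with $T \vdash \Box \varphi$, we view $\varphi$ as a formula $\varphi(x)$ in which $x$ happens not to occur (the definition of $\MEP$ only requires that $x$ be the only possible free variable, not that it actually appear); then $T \vdash \exists x\, \Box \varphi(x)$ by predicate logic, and $\MEP$ delivers $T \vdash \varphi(\overline{n}) \equiv \varphi$.

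There is no real obstacle; the only point that merits a word of comment is the reading of the definition of $\MEP$ needed to obtain closure under the box elimination rule from $\MEP$, namely that a formula $\varphi(x)$ with no free variables at all is allowed. The whole argument is a routine unpacking of definitions together with one application of \textsc{Nec}, and can be written very compactly.
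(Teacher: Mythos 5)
Your argument is correct and is exactly the routine unpacking the paper has in mind when it declares Proposition \ref{DPEPb} ``immediate from the definitions'' and omits the proof: the $(\Leftarrow)$ directions compose the two hypotheses, and the $(\Rightarrow)$ directions use \textsc{Nec} (which the paper assumes for every $\LB$-theory) together with the duplication and dummy-variable tricks. Nothing is missing.
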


We show that each existence property yields the corresponding disjunction property.

\begin{prop}\label{EPtoDP}
Let $T$ be any $\LB$-theory. 
\begin{enumerate}
	\item If $T$ is an extension of $\PA(\K)$ and $T$ has $\MEP$ (resp.~$\B$-$\EP$), then $T$ has $\MDP$ (resp.~$\B$-$\DP$); 
	\item If $T$ has $\DB$-$\EP$ \textup{(}resp.~$\SB$-$\EP$\textup{)}, then $T$ has $\DB$-$\DP$ \textup{(}resp.~$\SB$-$\DP$\textup{)}. 
\end{enumerate}
\end{prop}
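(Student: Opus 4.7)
My plan is to use the standard witness trick: given a disjunction $\alpha \lor \beta$, encode it as an existential statement $\exists x\,\theta(x)$ whose numeral witness records which disjunct holds, then apply the relevant existence property to extract a concrete $\overline{n}$, and finally read off the correct disjunct from whether $n = 0$ or not.

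For part 1, I would set $\chi(x) :\equiv (x = 0 \land \varphi) \lor (x \neq 0 \land \psi)$, so that $\chi(x)$ has no free variable other than $x$ and is propositionally equivalent to $\varphi$ when $x$ is $\overline{0}$ and to $\psi$ when $x$ is $\overline{n}$ for $n \neq 0$. Under the assumption that $T$ extends $\PA(\K)$, the normality of $\Box$ promotes these equivalences to $T \vdash \Box\chi(\overline{0}) \leftrightarrow \Box\varphi$ and $T \vdash \Box\chi(\overline{n}) \leftrightarrow \Box\psi$ for $n \neq 0$. From $T \vdash \Box\varphi \lor \Box\psi$ one therefore obtains $T \vdash \exists x\,\Box\chi(x)$. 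Applying $\MEP$ yields an $n$ with $T \vdash \chi(\overline{n})$; a case split on $n = 0$ versus $n \neq 0$ then delivers $T \vdash \varphi$ or $T \vdash \psi$. For $\B$-$\EP$, I note that $\Box\chi(x)$ is itself a $\B$-formula with only $x$ free, so $\B$-$\EP$ supplies an $n$ with $T \vdash \Box\chi(\overline{n})$; the same case split, now via the boxed equivalences, yields $T \vdash \Box\varphi$ or $T \vdash \Box\psi$.

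For part 2, I would run the same construction without the outer $\Box$: define $\theta(x) :\equiv (x = 0 \land \varphi) \lor (x \neq 0 \land \psi)$. Since $x = 0$ and $x \neq 0$ are $\Delta_0$ and both $\DB$ and $\SB$ are closed under $\land$ and $\lor$ (and contain $\Delta_0$), $\theta(x)$ belongs to $\DB$ (resp.\ $\SB$) whenever $\varphi, \psi$ do, and has only $x$ free. From $T \vdash \varphi \lor \psi$ one gets $T \vdash \exists x\,\theta(x)$, and $\DB$-$\EP$ (resp.\ $\SB$-$\EP$) hands back a numeral witness $\overline{n}$ from which $T \vdash \varphi$ or $T \vdash \psi$ follows by the same case split.

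There is no genuine obstacle here; the construction is a modal-arithmetical mimicry of the classical Friedman-style reduction of $\DP$ to $\EP$. The only subtle point is why the extension hypothesis $T \supseteq \PA(\K)$ appears only in part 1: it is needed precisely to promote the propositional equivalence $\chi(\overline{n}) \leftrightarrow \varphi$ (resp.\ $\psi$) to the boxed equivalence $\Box\chi(\overline{n}) \leftrightarrow \Box\varphi$ (resp.\ $\Box\psi$) via necessitation and the $\K$-axiom. In part 2 no such step is required, because the equivalence $\theta(\overline{n}) \leftrightarrow \varphi$ already lives inside the target class and is pure propositional reasoning.
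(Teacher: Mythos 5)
Your proof is correct and follows essentially the same route as the paper's: the paper proves Clause 1 for $\MEP$ using the very same witness formula $(x = 0 \land \varphi) \lor (x \neq 0 \land \psi)$ and leaves the remaining cases as ``similar,'' which is exactly what you have spelled out. Your observation about where the hypothesis $T \supseteq \PA(\K)$ is needed (to box the equivalences via \textsc{Nec} and the $\K$-axiom in the $\B$-$\EP$ case, and to pass from $\Box\varphi \lor \Box\psi$ to $\exists x\,\Box\chi(x)$) is the correct accounting of the paper's implicit reasoning.
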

\begin{proof}
We only give a proof of Clause 1 for $\MEP$ and $\MDP$. 
Let $\varphi$ and $\psi$ be any sentences such that $T \vdash \Box \varphi \lor \Box \psi$. 
Then, $T \vdash \exists x \Box \bigl((x = 0 \land \varphi) \lor (x \neq 0 \land \psi) \bigr)$. 
By $\MEP$, there exists a natural number $n$ such that $T \vdash (\overline{n} = 0 \land \varphi) \lor (\overline{n} \neq 0 \land \psi)$. 
If $n=0$, $T \vdash \varphi$; if $n \neq 0$, $T \vdash \psi$. 
Therefore, $T$ has $\MDP$. 
\end{proof}

In the literature so far, modal disjunction and existence properties in modal arithmetic have been considered only for theories which are closed under the box elimination rule. 
As shown in Proposition \ref{DPEPb}, if $T$ is closed under the box elimination rule, then $\MDP$ and $\B$-$\DP$ are equivalent. 
Hence, $\MDP$ and $\B$-$\DP$ have often been identified in the literature.
Since the present paper also deals with theories that are not necessarily closed under the box elimination rule, we distinguish between $\MDP$ and $\B$-$\DP$.
In fact, as Figure \ref{Fig1} shows, there seems to be a large gap between the strength of these properties.

We explore nontrivial implications between $\DB$-$\DP$, $(\DB, \Sigma_1)$-$\DP$, $\DB$-$\DC$, $\B$-$\DP$, $(\B, \Sigma_1)$-$\DP$, and $\B$-$\DC$.

\begin{lem}\label{DPtoSDP}
Let $T$ be any r.e.~extension of $\PA(\K)$ having $\B^{n+1}$-$\DP$. 
Then, for any $\LB$-sentences $\varphi_1, \ldots, \varphi_n$ and $\Sigma_1$ sentence $\sigma$, if $T \vdash \Box \varphi_1 \lor \cdots \lor \Box \varphi_n \lor \sigma$, then $T \vdash \Box \varphi_i$ for some $i$ $(1 \leq i \leq n)$ or $T \vdash \sigma$. 
\end{lem}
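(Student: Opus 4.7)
The plan is to use Theorem \ref{S1compl} to upgrade the trailing $\Sigma_1$ disjunct into a $\B$-disjunct, thereby presenting the disjunction as one of $n+1$ boxed sentences, at which point the $\B^{n+1}$-$\DP$ hypothesis applies directly.

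Concretely, since $\sigma$ is $\Sigma_1$, Theorem \ref{S1compl} yields $\PA(\K) \vdash \sigma \to \Box \sigma$, and hence $T \vdash \sigma \to \Box \sigma$. Combining this with the hypothesis $T \vdash \Box \varphi_1 \lor \cdots \lor \Box \varphi_n \lor \sigma$, we obtain $T \vdash \Box \varphi_1 \lor \cdots \lor \Box \varphi_n \lor \Box \sigma$, a disjunction of $n+1$ $\B$-sentences. Applying the assumed $\B^{n+1}$-$\DP$, we conclude that either $T \vdash \Box \varphi_i$ for some $i \in \{1, \ldots, n\}$, in which case the first clause of the conclusion already holds, or $T \vdash \Box \sigma$.

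The principal obstacle lies in the remaining case, where $T \vdash \Box \sigma$ but we must deduce $T \vdash \sigma$; this is non-trivial because $\PA(\K)$ does not include any reflection principle of the form $\Box \sigma \to \sigma$. I expect this step to be handled by a contradiction argument: write $\sigma \equiv \exists x\, \theta(x)$ with $\theta \in \Delta_0$, and assume for contradiction that $T \nvdash \sigma$. Then $T \nvdash \theta(\overline{k})$ for every standard $k$, and by the $\PA$-decidability of $\Delta_0$ we get $T \vdash \neg \theta(\overline{k})$ for every $k$. Applying \textsc{Nec} to each refutation yields $T \vdash \Box \neg \theta(\overline{k})$, and combining finitely many such theorems with $T \vdash \Box \sigma$ via the $\K$-principle $\Box A \land \Box B \to \Box (A \land B)$ produces $T \vdash \Box \exists x > \overline{K}\, \theta(x)$ for every $K$.

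The delicate final step is to convert this ``non-standard witness'' information into a genuine inconsistency, most plausibly by forcing $T \vdash \Box \bot$ (after which $T \vdash \Box \varphi_i$ for every $i$ by $\K$-reasoning, giving the conclusion), or by re-applying $\B^{n+1}$-$\DP$ to a carefully constructed auxiliary disjunction whose boxed disjuncts are built from the $\Box \neg \theta(\overline{k})$'s and a Rosser-style $\Sigma_1$ fixed-point of $\sigma$ that exploits the r.e.~nature of $T$. This synthesis --- bridging the external fact $T \vdash \neg \theta(\overline{k})$ (for all $k$) with the internal assertion $T \vdash \Box \sigma$ --- is where I expect the real work of the proof to lie.
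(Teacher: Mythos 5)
Your opening reduction is fine as far as it goes: Theorem \ref{S1compl} does give $T \vdash \Box\varphi_1\lor\cdots\lor\Box\varphi_n\lor\Box\sigma$, and $\B^{n+1}$-$\DP$ then leaves you with the single residual case $T\vdash\Box\sigma$, $T\nvdash\sigma$. But this is where the proposal stops being a proof: everything after ``the principal obstacle'' is a description of the difficulty rather than an argument, and the escape routes you sketch do not close the gap. From $T\nvdash\sigma$ you correctly get $\NS\models\neg\sigma$ and hence $T\vdash\Box\neg\theta(\overline{k})$ for every standard $k$; but only finitely many of these can be conjoined inside a single box, so all you ever obtain is $T\vdash\Box\,\exists x>\overline{K}\,\theta(x)$ for each $K$, which is neither $T\vdash\Box\bot$ nor an inconsistency --- $\PA(\K)$ has no reflection or $\omega$-rule under $\Box$ that would rule out a theory ``believing'' a false $\Sigma_1$ sentence behind a box while remaining consistent. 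The lemma moreover assumes neither $T\nvdash\Box\bot$ nor consistency, so there is no background hypothesis to lean on. In short, once you have converted $\sigma$ into $\Box\sigma$ and spent the $\B^{n+1}$-$\DP$ hypothesis on that disjunction, the information you retain is too weak to recover $T\vdash\sigma$ or $T\vdash\Box\varphi_i$.

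The paper's proof avoids ever reaching the state $T\vdash\Box\sigma$. Writing $\sigma\equiv\exists x\,\delta(x)$ and using $T\nvdash\sigma$ to get $\NS\models\forall x\,\neg\delta(x)$, it introduces for each $i$ a Rosser-style pair of $\Sigma_1$ fixed points $\psi_i^0,\psi_i^1$ that race $\delta$-witnesses against $T$-proofs of $\Box(\varphi_i\lor\psi_i^0)$ and of $\Box\psi_i^1$. One shows $\PA\vdash\sigma\to\psi_1^0\lor\cdots\lor\psi_n^0\lor(\psi_1^1\land\cdots\land\psi_n^1)$, pushes this under $\Box$ with Theorem \ref{S1compl}, and applies $\B^{n+1}$-$\DP$ to $\Box(\varphi_1\lor\psi_1^0)\lor\cdots\lor\Box(\varphi_n\lor\psi_n^0)\lor\Box(\psi_1^1\land\cdots\land\psi_n^1)$. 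The Rosser comparison, decided in $\NS$ and imported into $T$ by $\Sigma_1$-completeness, then forces both $\Box(\varphi_i\lor\psi_i^0)$ and $\Box\psi_i^1$ to be $T$-provable for some $i$; since $\PA\vdash\psi_i^1\to\neg\psi_i^0$, this yields $T\vdash\Box\neg\psi_i^0$ and hence $T\vdash\Box\varphi_i$. Your closing remark about ``a Rosser-style $\Sigma_1$ fixed-point'' gestures in this direction, but no fixed point is constructed and no argument is given, so the essential content of the lemma is missing from the proposal.
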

\begin{proof}
Suppose $T \vdash \Box \varphi_1 \lor \cdots \lor \Box \varphi_n \lor \sigma$ and $T \nvdash \sigma$, and we would like to show $T \vdash \Box \varphi_i$ for some $i$. 
We may assume that $\sigma$ is of the form $\exists x \delta(x)$ for some $\Delta_0$ formula $\delta(x)$. 
Then, $\NS \models \forall x \neg \delta(x)$ because $T \nvdash \sigma$. 
By the Fixed Point Lemma, for each $i$ with $1 \leq i \leq n$, let $\psi_i^0$ and $\psi_i^1$ be $\Sigma_1$ sentences satisfying the following equivalences: 
\begin{itemize}
	\item $\PA \vdash \psi_i^0 \leftrightarrow \exists x \Bigl( \bigl(\delta(x) \lor \Prf_T(\gn{\Box \psi_i^1}, x) \bigr) \land \forall y < x \, \neg \Prf_T(\gn{\Box (\varphi_i \lor \psi_i^0)}, y) \Bigr)$; 
	\item $\PA \vdash \psi_i^1 \leftrightarrow \exists y \Bigl(\Prf_T(\gn{\Box (\varphi_i \lor \psi_i^0)}, y) \land \forall x \leq y \, \bigl(\neg \delta(x) \land \neg \Prf_T(\gn{\Box \psi_i^1}, x) \bigr) \Bigr)$.  
\end{itemize}
Then, for each $i$, we get $\PA \vdash \sigma \to \psi_i^0 \lor \psi_i^1$. 
Hence, we have 
\[
	\PA \vdash \sigma \to \psi_1^0 \lor \cdots \lor \psi_n^0 \lor (\psi_1^1 \land \cdots \land \psi_n^1).
\] 
By Theorem \ref{S1compl}, 
\[
	\PA(\K) \vdash \sigma \to \Box \psi_1^0 \lor \cdots \lor \Box \psi_n^0 \lor \Box (\psi_1^1 \land \cdots \land \psi_n^1).
\] 
Hence, 
\begin{equation}\label{imp1}
	\PA(\K) \vdash \sigma \to \Box (\varphi_1 \lor \psi_1^0) \lor \cdots \lor \Box (\varphi_n \lor \psi_n^0) \lor \Box (\psi_1^1 \land \cdots \land \psi_n^1).
\end{equation}
On the other hand, for each $i$, we have $\PA(\K) \vdash \Box \varphi_i \to \Box(\varphi_i \lor \psi_i^0)$. 
From our supposition, we obtain
\[
	T \vdash \Box (\varphi_1 \lor \psi_1^0) \lor \cdots \lor \Box (\varphi_n \lor \psi_n^0) \lor \sigma. 
\] 
By combining this with (\ref{imp1}), 
\[
	T \vdash \Box (\varphi_1 \lor \psi_1^0) \lor \cdots \lor \Box (\varphi_n \lor \psi_n^0) \lor \Box (\psi_1^1 \land \cdots \land \psi_n^1).
\] 
By $\B^{n+1}$-$\DP$, we have $T \vdash \Box (\varphi_i \lor \psi_i^0)$ for some $i$ or $T \vdash \Box (\psi_1^1 \land \cdots \land \psi_n^1)$. 
If $T \vdash \Box (\psi_1^1 \land \cdots \land \psi_n^1)$, then $T \vdash \Box \psi_i^1$ for each $i$. 
\begin{itemize}
	\item If $T \vdash \Box (\varphi_i \lor \psi_i^0)$ and $T \nvdash \Box \psi_i^1$, then $\NS \models \psi_i^1$ by the choice of $\psi_i^1$ because $\NS \models \forall x \neg \delta(x)$. 
	Thus, $T \vdash \psi_i^1$ by $\Sigma_1$-completeness, and hence $T \vdash \Box \psi_i^1$. 
	This is a contradiction. 
	\item If $T \vdash \Box \psi_i^1$ and $T \nvdash \Box(\varphi_i \lor \psi_i^0)$, then $\NS \models \psi_i^0$, and hence $T \vdash \psi_i^0$. 
	Thus, $T \vdash \varphi_i \lor \psi_i^0$ and hence $T \vdash \Box(\varphi_i \lor \psi_i^0)$, a contradiction. 
\end{itemize}
We have shown that in either case, for some $i$, both $\Box (\varphi_i \lor \psi_i^0)$ and $\Box \psi_i^1$ are provable in $T$.  
Since $\PA \vdash \psi_i^1 \to \neg \psi_i^0$, we have $T \vdash \Box \neg \psi_i^0$ for such an $i$. 
Therefore, we conclude $T \vdash \Box \varphi_i$. 
\end{proof}

From Propositions \ref{DBSent} and \ref{BDP} and Lemma \ref{DPtoSDP}, we obtain the following proposition. 

\begin{prop}\label{DPtoSDP2}
Let $T$ be any r.e.~extension of $\PA(\K)$. 
\begin{enumerate}
	\item If $T$ has $\DB$-$\DP$, then $T$ has $(\DB, \Sigma_1)$-$\DP$; 
	\item If $T$ has $\B$-$\DP$, then $T$ has $(\B, \Sigma_1)$-$\DP$. 
\end{enumerate}
\end{prop}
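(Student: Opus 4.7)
The plan is to reduce both clauses directly to Lemma \ref{DPtoSDP}, using Proposition \ref{DBSent} to reshape any $\DB$ sentence as a finite disjunction of $\B$ sentences, and Proposition \ref{BDP} to upgrade $\DB$-$\DP$ into $\B^{n+1}$-$\DP$ for whatever $n$ we need.

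For Clause 2, I would note that $\B$-$\DP$ is by definition $\B^2$-$\DP$, i.e., the case $n = 1$ of the hypothesis of Lemma \ref{DPtoSDP}. So given a $\B$-sentence $\Box \varphi$ and a $\Sigma_1$ sentence $\sigma$ with $T \vdash \Box \varphi \lor \sigma$, the lemma applied with $n = 1$ immediately gives $T \vdash \Box \varphi$ or $T \vdash \sigma$, which is exactly $(\B, \Sigma_1)$-$\DP$.

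For Clause 1, suppose $T$ has $\DB$-$\DP$, and let $\varphi$ be a $\DB$ sentence and $\sigma$ a $\Sigma_1$ sentence with $T \vdash \varphi \lor \sigma$. By Proposition \ref{DBSent} we can fix $k \in \NS$ and sentences $\varphi_0, \ldots, \varphi_{k-1}$ with $\PA_\Box \vdash \varphi \leftrightarrow \bigvee_{i < k} \Box \varphi_i$. The degenerate case $k = 0$ is handled separately: then $\varphi$ is $\PA_\Box$-equivalent to $\bot$, so $T \vdash \sigma$ is immediate. If $k \geq 1$, Proposition \ref{BDP} upgrades $\DB$-$\DP$ to $\B^{k+1}$-$\DP$, so Lemma \ref{DPtoSDP} applied with $n = k$ to $T \vdash \Box \varphi_0 \lor \cdots \lor \Box \varphi_{k-1} \lor \sigma$ yields either $T \vdash \Box \varphi_i$ for some $i < k$, whence $T \vdash \varphi$ via the equivalence, or $T \vdash \sigma$.

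There is essentially no obstacle to overcome: the substantive work (the Rosser-style fixed-point construction of the witnessing $\psi_i^0, \psi_i^1$) has already been done inside Lemma \ref{DPtoSDP}, and what remains is purely bookkeeping. The only points that require the tiny bit of care are (i)~matching the index shift $n \mapsto n+1$ in the hypothesis of Lemma \ref{DPtoSDP} with the correct choice $n = 1$ for Clause 2 and $n = k$ for Clause 1, and (ii)~disposing of the empty-disjunction case $k = 0$ in Clause 1 before invoking the lemma.
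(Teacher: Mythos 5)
Your proposal is correct and follows exactly the route the paper intends: the paper derives Proposition \ref{DPtoSDP2} precisely by combining Proposition \ref{DBSent}, Proposition \ref{BDP}, and Lemma \ref{DPtoSDP}, and your bookkeeping (the choice $n=1$ for Clause 2, $n=k$ for Clause 1, and the separate treatment of $k=0$) is accurate.
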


\begin{prop}\label{DPtoS1sound}
Let $T$ be any r.e.~extension of $\PA(\K)$ with $T \nvdash \Box \bot$. 
If $T$ has $(\B, \Sigma_1)$-$\DP$, then $T$ is $\Sigma_1$-sound. 
\end{prop}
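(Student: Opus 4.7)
The plan is to argue by contradiction. Suppose $T$ is an r.e.\ extension of $\PA(\K)$ with $(\B, \Sigma_1)$-$\DP$ and $T \nvdash \Box \bot$, but some $\Sigma_1$ sentence $\sigma \equiv \exists x \delta(x)$ (with $\delta \in \Delta_0$) satisfies $T \vdash \sigma$ while $\NS \models \forall x \neg \delta(x)$. We aim to derive $T \vdash \Box \bot$. The strategy specializes the Rosser-style construction from the proof of Lemma \ref{DPtoSDP} to $n = 1$ and $\varphi_1 = \bot$; instead of invoking $\B$-$\DP$ directly, we apply $(\B, \Sigma_1)$-$\DP$ twice, once with each of the fixed-point sentences playing the $\Sigma_1$ role.

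By the Fixed Point Lemma, choose $\Sigma_1$ sentences $\psi^0, \psi^1$ such that
\begin{align*}
\PA &\vdash \psi^0 \leftrightarrow \exists x \bigl((\delta(x) \lor \Prf_T(\gn{\Box \psi^1}, x)) \land \forall y < x\, \neg \Prf_T(\gn{\Box \psi^0}, y)\bigr), \\
\PA &\vdash \psi^1 \leftrightarrow \exists y \bigl(\Prf_T(\gn{\Box \psi^0}, y) \land \forall x \leq y\, (\neg \delta(x) \land \neg \Prf_T(\gn{\Box \psi^1}, x))\bigr).
\end{align*}
As in Lemma \ref{DPtoSDP}, $\PA \vdash \sigma \to \psi^0 \lor \psi^1$, and hence $T \vdash \psi^0 \lor \psi^1$. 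Applying Theorem \ref{S1compl} to $\psi^1$ yields $T \vdash \psi^0 \lor \Box \psi^1$, a $\Sigma_1$-by-$\B$ disjunction, so $(\B, \Sigma_1)$-$\DP$ gives $T \vdash \psi^0$ or $T \vdash \Box \psi^1$. In the first subcase, Theorem \ref{S1compl} supplies $T \vdash \Box \psi^0$; letting $p$ be a $T$-proof of $\Box \psi^0$ and supposing for contradiction $T \nvdash \Box \psi^1$, the defining equivalence of $\psi^1$ evaluated at $y = p$—together with $\NS \models \forall x \neg \delta(x)$ and $\NS \models \forall x\, \neg \Prf_T(\gn{\Box \psi^1}, x)$—forces $\NS \models \psi^1$, whence $T \vdash \psi^1$ by $\Sigma_1$-completeness of $\PA$ and then $T \vdash \Box \psi^1$ by Theorem \ref{S1compl}, a contradiction. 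So $T \vdash \Box \psi^1$ in either subcase.

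With $T \vdash \Box \psi^1$ secured, a symmetric argument shows $T \vdash \Box \psi^0$: if $T \nvdash \Box \psi^0$, then letting $q$ be the least $T$-proof of $\Box \psi^1$, the defining equivalence of $\psi^0$ evaluated at $x = q$ (using $\NS \models \forall y \neg \Prf_T(\gn{\Box \psi^0}, y)$) gives $\NS \models \psi^0$, and hence $T \vdash \Box \psi^0$, a contradiction. The standard Rosser case analysis, splitting on the relative size of the witnesses for $\psi^0$ and $\psi^1$, yields $\PA \vdash \psi^0 \land \psi^1 \to \bot$, so normality in $\PA(\K)$ gives $T \vdash \Box \psi^0 \land \Box \psi^1 \to \Box \bot$, and therefore $T \vdash \Box \bot$, contradicting the hypothesis. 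The main delicate point is coordinating the two applications of $(\B, \Sigma_1)$-$\DP$ so that both $\Box \psi^0$ and $\Box \psi^1$ become $T$-provable before we invoke their mutual inconsistency; the actual Rosser case analysis is routine.
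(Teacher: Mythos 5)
Your proof is correct and takes essentially the same route as the paper's: the same Rosser-style pair of fixed points tied to the false $\Sigma_1$ witness $\delta$, the same use of the formalized $\Sigma_1$-completeness theorem to turn $\psi^0 \lor \psi^1$ into a $(\B,\Sigma_1)$ disjunction, and the same endgame forcing both boxed fixed points to be provable and hence $T \vdash \Box\bot$. The paper merely organizes this as a contrapositive (exhibiting $\Box\sigma_0 \lor \sigma_1$ as a provable disjunction with neither disjunct provable, with $\sigma_1$ left unboxed inside the fixed points), and the only blemish in your write-up is that the announced plan of ``applying $(\B,\Sigma_1)$-$\DP$ twice'' does not match the argument actually given (one application plus two meta-level $\Sigma_1$-completeness steps), which does not affect correctness.
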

\begin{proof}
We prove the contrapositive. 
Suppose that $T \nvdash \Box \bot$ and $T$ is not $\Sigma_1$-sound. 
Then, there exists a $\Delta_0$ formula $\delta(x)$ such that $T \vdash \exists x \delta(x)$ and $\NS \models \forall x \neg \delta(x)$. 
Let $\sigma_0$ and $\sigma_1$ be $\Sigma_1$ sentences satisfying the following equivalences: 
\begin{itemize}
	\item $\PA \vdash \sigma_0 \leftrightarrow \exists x \Bigl( \bigl(\delta(x) \lor \Prf_T(\gn{\sigma_1}, x) \bigr) \land \forall y < x \, \neg \Prf_T(\gn{\Box \sigma_0}, y) \Bigr)$; 
	\item $\PA \vdash \sigma_1 \leftrightarrow \exists y \Bigl(\Prf_T(\gn{\Box \sigma_0}, y) \land \forall x \leq y \, \bigl(\neg \delta(x) \land \neg \Prf_T(\gn{\sigma_1}, x) \bigr) \Bigr)$.  
\end{itemize}
Since $T \vdash \exists x \delta(x)$, we have $T \vdash \sigma_0 \lor \sigma_1$. 
Therefore, $T \vdash (\Box \sigma_0) \lor \sigma_1$ by Theorem \ref{S1compl}. 

Suppose, towards a contradiction, that $T \vdash \Box \sigma_0$ or $T \vdash \sigma_1$. 
Let $p$ be the smallest $T$-proof of $\Box \sigma_0$ or $\sigma_1$. 
If $p$ is a proof of $\Box \sigma_0$, then $\NS \models \sigma_1$ by the choice of $\sigma_1$. 
Hence, $T \vdash \sigma_1$ and thus $T \vdash \Box \sigma_1$. 
Since $T \vdash \sigma_0 \land \sigma_1 \to \bot$, we have $T \vdash \Box \bot$ because $T \vdash \Box \sigma_0 \land \Box \sigma_1$. 
This is a contradiction. 
If $p$ is a proof of $\sigma_1$, then it is shown $T \vdash \sigma_0$. 
This contradicts the consistency of $T$. 
Thus, we have shown that $T \nvdash \Box \sigma_0$ and $T \nvdash \sigma_1$. 
This means that $T$ does not have $(\B, \Sigma_1)$-$\DP$. 
\end{proof}

\begin{prop}\label{DPtoDC}
Let $T$ be any consistent r.e.~extension of $\PA(\K)$ with $T \nvdash \Box \bot$ and $\Gamma$ be a class of formulas with $\B \subseteq \Gamma$. 
If $T$ has $(\Gamma, \Sigma_1)$-$\DP$, then $T$ is $\Gamma$-$\DC$. 
\end{prop}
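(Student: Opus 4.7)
The plan is to reduce the problem to an application of $(\Gamma,\Sigma_1)$-$\DP$ combined with $\Sigma_1$-soundness, the latter being supplied by Proposition \ref{DPtoS1sound}. The key observation is that $\PR_T(\gn{\varphi})$ is, by the choice of the provability predicate, a genuine $\Sigma_1$ sentence, while $\varphi$ itself lies in $\Gamma$. So the given disjunction $T\vdash\varphi\lor\PR_T(\gn{\varphi})$ is exactly of the form to which $(\Gamma,\Sigma_1)$-$\DP$ applies directly.

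First, I would observe that from $\B\subseteq\Gamma$ and the assumption that $T$ has $(\Gamma,\Sigma_1)$-$\DP$, the theory $T$ in particular has $(\B,\Sigma_1)$-$\DP$. Combined with $T\nvdash\Box\bot$, Proposition \ref{DPtoS1sound} then yields that $T$ is $\Sigma_1$-sound. This is the only nontrivial input needed from the preceding development.

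Next, suppose $\varphi$ is any $\Gamma$ sentence with $T\vdash\varphi\lor\PR_T(\gn{\varphi})$. Applying $(\Gamma,\Sigma_1)$-$\DP$ gives two cases. In the first case $T\vdash\varphi$ and we are done. In the second case $T\vdash\PR_T(\gn{\varphi})$, which is a $\Sigma_1$ sentence, so by the $\Sigma_1$-soundness established above, $\NS\models\PR_T(\gn{\varphi})$; this just means that $\gn{\varphi}$ really has a $T$-proof, i.e., $T\vdash\varphi$.

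I do not anticipate any technical obstacle here: the whole proof is essentially a two-line application of prior results, and the only place where care is needed is in verifying that $\PR_T(\gn{\varphi})$ is literally a $\Sigma_1$ sentence (so that $(\Gamma,\Sigma_1)$-$\DP$ applies with $\PR_T(\gn{\varphi})$ on the right) and that $\varphi$ is literally in $\Gamma$ (so that it goes on the left). Both are immediate from the standing conventions on $\PR_T(x)$ and on $\Gamma$, respectively.
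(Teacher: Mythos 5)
Your proof is correct and follows essentially the same route as the paper's: apply $(\Gamma,\Sigma_1)$-$\DP$ to the disjunction, and use Proposition \ref{DPtoS1sound} (via $\B\subseteq\Gamma$ and $T\nvdash\Box\bot$) to convert $T\vdash\PR_T(\gn{\varphi})$ into $T\vdash\varphi$ by $\Sigma_1$-soundness. No gaps.
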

\begin{proof}
Suppose that $T$ has $(\Gamma, \Sigma_1)$-$\DP$. 
Let $\varphi$ be any $\Gamma$ sentence such that $T \vdash \varphi \lor \PR_T(\gn{\varphi})$. 
By $(\Gamma, \Sigma_1)$-$\DP$, $T \vdash \varphi$ or $T \vdash \PR_T(\gn{\varphi})$. 
Since $\B \subseteq \Gamma$, by Proposition \ref{DPtoS1sound}, $T$ is $\Sigma_1$-sound. 
Thus, in either case, we obtain $T \vdash \varphi$. 
\end{proof}

The converse implication also holds when $\Gamma$ is $\B$ or $\DB$. 
In order to prove this, we generalize the Fixed Point Lemma to modal arithmetic. 
It is proved by repeating a well-known proof, and so we omit it (see \cite{Boo93}).

\begin{lem}[The Fixed Point Lemma]\label{FPL}
For any $\LB$-formulas $\varphi_0(x_0, \ldots, x_{k-1})$, $\ldots$, $\varphi_{k-1}(x_0, \ldots, x_{k-1})$ with only the free variables $x_0, \ldots, x_{k-1}$, we can effectively find $\LB$-sentences $\psi_0, \ldots, \psi_{k-1}$ such that for each $i < k$, 
\[
	\PA_\Box \vdash \psi_i \leftrightarrow \varphi_i(\gn{\psi_0}, \ldots, \gn{\psi_{k-1}}). 
\]
Moreover, for each $i < k$, if $\varphi_i(x_0, \ldots, x_{k-1})$ is a $\SB$ formula, then such a $\psi_i$ can be found as a $\SB$ sentence. 
\end{lem}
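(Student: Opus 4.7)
The plan is to follow G\"odel's classical diagonalization, adapted to the simultaneous $k$-ary case and verified to go through entirely inside $\PA_\Box$ (rather than requiring any modal axioms). First, since $\PA_\Box$ contains the induction scheme for all $\LB$-formulas and hence includes $\PA$, every primitive recursive function is representable in it. I would fix a $\Sigma_1$ formula $\mathrm{Sub}(y, z_0, \ldots, z_{k-1}, u)$ that provably represents (in $\PA$, hence in $\PA_\Box$) the graph of the primitive recursive substitution function which takes a code $y$ of an $\LB$-formula in the $k$ free variables $x_0, \ldots, x_{k-1}$ together with numbers $z_0, \ldots, z_{k-1}$ and returns the code of the formula obtained by substituting the numerals $\overline{z_0}, \ldots, \overline{z_{k-1}}$ for those variables.

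Next, for each $i<k$, I would introduce the auxiliary formula
\[
\sigma_i(y_0, \ldots, y_{k-1}) \;:\equiv\; \exists u_0 \cdots \exists u_{k-1}\Bigl(\bigwedge_{j<k} \mathrm{Sub}(y_j, y_0, \ldots, y_{k-1}, u_j) \land \varphi_i(u_0, \ldots, u_{k-1})\Bigr),
\]
which implements the standard diagonal self-reference. Let $n_i := \gn{\sigma_i(y_0, \ldots, y_{k-1})}$ and set $\psi_i :\equiv \sigma_i(\overline{n_0}, \ldots, \overline{n_{k-1}})$. By the definition of the substitution function, $\gn{\psi_i}$ is exactly its value on $(n_i, n_0, \ldots, n_{k-1})$, so $\PA$ proves both $\mathrm{Sub}(\overline{n_i}, \overline{n_0}, \ldots, \overline{n_{k-1}}, \overline{\gn{\psi_i}})$ and the functionality assertion that the witness $u_j$ in the definition of $\sigma_i$ at $(\overline{n_0}, \ldots, \overline{n_{k-1}})$ is uniquely determined (equal to $\overline{\gn{\psi_j}}$). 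Plugging these facts in yields the required equivalence $\PA_\Box \vdash \psi_i \leftrightarrow \varphi_i(\overline{\gn{\psi_0}}, \ldots, \overline{\gn{\psi_{k-1}}})$.

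For the moreover clause, $\mathrm{Sub}$ can be chosen to be $\Sigma_1$ (in fact bounded $\Sigma_1$), and $\Sigma_1 \subseteq \SB$. Since $\SB$ is closed under $\land$ and under unrestricted $\exists$, whenever $\varphi_i$ is $\SB$ the formula $\sigma_i$ is also $\SB$, and hence $\psi_i$ is a $\SB$ sentence.

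The only nontrivial point is to arithmetize the substitution function carefully enough that the fixed-point equivalence is derivable purely from $\PA_\Box$ without invoking any $\Box$-axioms; once this coding is set up, preservation of $\SB$ falls out of the closure conditions on the class. I anticipate no genuine obstacle beyond this bookkeeping, which is exactly why the authors defer to Boolos \cite{Boo93}.
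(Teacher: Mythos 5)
Your construction is the standard simultaneous diagonalization and is surely what the authors have in mind when they defer to Boolos; the choice of the $\exists$--$\land$ form of $\sigma_i$ is also the right one for the ``moreover'' clause, since $\SB$ is closed under $\land$ and unbounded $\exists$ but not under $\to$ or unbounded $\forall$. However, there is one genuine gap, and it sits exactly where you claim there is none. The step ``plugging these facts in yields the required equivalence'' needs, in the direction from $\psi_i$ to $\varphi_i(\gn{\psi_0}, \ldots, \gn{\psi_{k-1}})$, the inference from $u_j = \overline{\gn{\psi_j}}$ and $\varphi_i(u_0, \ldots, u_{k-1})$ to $\varphi_i(\overline{\gn{\psi_0}}, \ldots, \overline{\gn{\psi_{k-1}}})$. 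That is an instance of the identity principle $x = y \to (\varphi(x) \to \varphi(y))$ for the $\LB$-formula $\varphi_i$, which may contain $\Box$. As the paper stresses in \ref{Sec:MA}, this principle is adopted as an axiom only for atomic formulas, and for arbitrary $\LB$-formulas it is derived in Proposition \ref{PAK}.2 --- whose proof of the $\Box$-case essentially uses the distribution axiom of $\K$ (to pass from $\vdash x = y \to (\psi(x) \to \psi(y))$ to $\vdash \Box x = y \to (\Box\psi(x) \to \Box\psi(y))$, and to prove $x = y \to \Box x = y$). So your claim that the equivalence is ``derivable purely from $\PA_\Box$ without invoking any $\Box$-axioms'' is unjustified: pure first-order logic with equality does not give substitutivity of identicals inside $\Box$, and switching to the $\forall$--$\to$ form of $\sigma_i$ only moves the same problem to the other direction of the biconditional.

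The repair is to cite Proposition \ref{PAK}.2, at the cost of carrying out the argument over $\PA(\K)$ rather than over bare $\PA_\Box$. This is harmless for the paper, since every invocation of the Fixed Point Lemma there (Proposition \ref{DCtoSDP}, Theorem \ref{SBDP}) takes place in a theory extending $\PA(\K)$; but as written your proof does not establish the lemma in the strength stated, and the difficulty is not the arithmetization of substitution (which is indeed routine) but the failure of the Leibniz principle in intensional contexts. The rest --- representability of the substitution function in $\PA \subseteq \PA_\Box$, the computation $\mathrm{sub}(n_j, n_0, \ldots, n_{k-1}) = \gn{\psi_j}$, and the preservation of $\SB$ under $\land$, $\exists$, and substitution of numerals --- is correct.
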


\begin{prop}\label{DCtoSDP}
Let $T$ be any r.e.~extension of $\PA(\K)$. 
\begin{enumerate}
	\item If $T$ is $\DB$-$\DC$, then $T$ has $(\DB, \Sigma_1)$-$\DP$; 
	\item If $T$ is $\B$-$\DC$, then $T$ has $(\B, \Sigma_1)$-$\DP$. 
\end{enumerate}
\end{prop}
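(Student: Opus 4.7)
My plan is to prove both clauses by a Rosser-style fixed-point construction, arranging matters so that the hypothesis $T \vdash \varphi \lor \sigma$ (with $\sigma = \exists x\,\delta(x) \in \Sigma_1$) is converted into a disjunction of the form $T \vdash \theta' \lor \PR_T(\gn{\theta'})$ for a carefully chosen target $\theta'$ in $\B$ (resp.~$\DB$), after which the assumed $\DC$ gives $T \vdash \theta'$, and a final bookkeeping step extracts $T \vdash \varphi$. Throughout, one assumes $T \nvdash \sigma$ (otherwise there is nothing to show), which forces $\NS \models \forall x\,\neg\delta(x)$ by the r.e.~nature of $T$.

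For Clause 2 (where $\varphi = \Box\psi$), take $\theta' := \Box(\psi \lor \rho)$ and use the Fixed Point Lemma to find a $\Sigma_1$ sentence $\rho$ satisfying
\[
\PA \vdash \rho \leftrightarrow \exists x\bigl(\delta(x) \land \forall y \leq x\,\neg\Prf_T(\gn{\theta'}, y)\bigr).
\]
A standard least-witness argument gives $\PA \vdash \sigma \to \rho \lor \PR_T(\gn{\theta'})$. Combining this with the hypothesis and using (i) $\PA(\K) \vdash \Box\psi \to \theta'$ and (ii) $\PA(\K) \vdash \rho \to \Box\rho \to \theta'$ (Theorem \ref{S1compl} plus K), one arrives at $T \vdash \theta' \lor \PR_T(\gn{\theta'})$; the $\B$-$\DC$ hypothesis then yields $T \vdash \theta'$. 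A $T$-proof of $\theta'$ has some Gödel number $p$, so $T \vdash \Prf_T(\gn{\theta'}, \bar p)$, and the fixed-point equation combined with the $T$-provable $\Delta_0$ fact $\forall x<\bar p\,\neg\delta(x)$ delivers $T \vdash \neg\rho$. Necessitation gives $T \vdash \Box\neg\rho$, and a routine K-derivation from $\Box(\psi\lor\rho) \land \Box\neg\rho \to \Box\psi$ concludes $T \vdash \Box\psi$.

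For Clause 1, first apply Proposition \ref{DBSent} to replace $\varphi$ by $\theta := \bigvee_{i<k}\Box\psi_i$; the case $k = 0$ forces $T \vdash \sigma$ directly, so assume $k \geq 1$. The crucial adjustment is to take $\theta' := \theta \lor \Box\rho$, which lies in $\DB$ as a disjunction of $\B$ formulas, and to run the analogous Rosser construction and $\DB$-$\DC$ application to get $T \vdash \theta'$ and $T \vdash \neg\rho$. Necessitation and K give $T \vdash \Box\rho \to \Box\bot$, and since $k \geq 1$, $\PA(\K) \vdash \Box\bot \to \Box\psi_0 \to \theta$; hence $T \vdash \theta' \to \theta$ and $T \vdash \varphi$. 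The main obstacle is selecting the right target $\theta'$ in this clause: the naive choice $\Box(\bigvee_i\psi_i \lor \rho) \in \B$ would only yield $T \vdash \Box\bigvee_i\psi_i$, which over $\PA(\K)$ is strictly weaker than the desired $\bigvee_i\Box\psi_i$; the disjunctive form $\theta \lor \Box\rho$ together with the $\Box\bot$-explosion available precisely when $k \geq 1$ is what makes the extraction of $\varphi$ from $\theta'$ succeed.
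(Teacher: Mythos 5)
Your proof is correct and follows essentially the same Rosser-style strategy as the paper: construct a $\Sigma_1$ fixed point relative to $\Prf_T$, convert the hypothesis $T \vdash \varphi \lor \sigma$ into an instance of $\Gamma$-$\DC$ via formalized $\Sigma_1$-completeness, and then use the G\"odel number of the resulting actual $T$-proof to refute the Rosser sentence and recover $\varphi$. The only real divergence is in Clause 1, where the paper takes the target $\bigvee_{i<k}\Box(\psi_i \lor \rho)$ (distributing the Rosser sentence inside each box, so that $T \vdash \neg\rho$ yields $\Box(\psi_i\lor\rho)\to\Box\psi_i$ directly), whereas you take $\bigvee_{i<k}\Box\psi_i \lor \Box\rho$ and compensate with the explosion step $\Box\rho \to \Box\bot \to \Box\psi_0$ using $k \geq 1$; both choices work and the rest of the argument is the same.
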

\begin{proof}
We prove only Clause 1. 
Clause 2 is proved similarly. 
Suppose that $T$ is $\DB$-$\DC$. 
Let $\varphi$ be any $\DB$ sentence and $\delta(x)$ be any $\Delta_0$ formula such that $T \vdash \varphi \lor \exists x \delta(x)$ and $T \nvdash \exists x \delta(x)$. 
We would like to show $T \vdash \varphi$. 
In this case, $\NS \models \forall x \neg \delta(x)$. 
By Proposition \ref{DBSent}, we may assume that $\varphi$ is of the form $\Box \psi_0 \lor \cdots \lor \Box \psi_{k-1}$. 
By the Fixed Point Lemma, let $\xi_0, \ldots, \xi_{k-1}$ be $\LB$-sentences satisfying the following equivalences for all $i < k$: 
\[
	\PA_\Box \vdash \xi_i \leftrightarrow \Bigl[\psi_i \lor \exists x \bigl(\delta(x) \land \forall y < x \, \neg \Prf_T(\gn{\Box \xi_0 \lor \cdots \lor \Box \xi_{k-1}}, y) \bigr)\Bigr].  
\]
Since $\PA_\Box \vdash \psi_i \to \xi_i$, we have $\PA(\K) \vdash \Box \psi_i \to \Box \xi_i$. 
Also, 
\begin{align*}
	\PA(\K) \vdash \exists x \delta(x) \land & \neg \PR_T(\gn{\Box \xi_0 \lor \cdots \lor \Box \xi_{k-1}}) \\
	& \ \ \ \to \exists x \bigl(\delta(x) \land \forall y < x \, \neg \Prf_T(\gn{\Box \xi_0 \lor \cdots \lor \Box \xi_{k-1}}, y) \bigr), \\
	& \ \ \ \to \PR_T \Bigl(\gn{\exists x \bigl(\delta(x) \land \forall y < x \, \neg \Prf_T(\gn{\Box \xi_0 \lor \cdots \lor \Box \xi_{k-1}}, y) \bigr)} \Bigr), \\
	& \ \ \ \to \PR_T(\gn{\xi_i}). 
\end{align*}
Since $\PA(\K) \vdash \PR_T(\gn{\xi_i}) \to \PR_T(\gn{\Box \xi_i})$ because $\PA$ can prove that the consequences of $T$ are closed under the rule \textsc{Nec}, we obtain 
$\PA(\K) \vdash \PR_T(\gn{\xi_i}) \to \PR_T(\gn{\Box \xi_0 \lor \cdots \lor \Box \xi_{k-1}})$. 
Thus, we have
\[
	\PA(\K) \vdash \bigl[\exists x \delta(x) \land \neg \PR_T(\gn{\Box \xi_0 \lor \cdots \lor \Box \xi_{k-1}}) \bigr] \to \PR_T(\gn{\Box \xi_0 \lor \cdots \lor \Box \xi_{k-1}}), 
\] 
and hence
\[
	\PA(\K) \vdash \exists x \delta(x) \to \PR_T(\gn{\Box \xi_0 \lor \cdots \lor \Box \xi_{k-1}}).
\] 
Then, by combining this with our assumption that $T \vdash \Box \psi_0 \lor \cdots \lor \Box \psi_{k-1} \lor \exists x \delta(x)$, we obtain 
\[
	T \vdash \Box \xi_0 \lor \cdots \lor \Box \xi_{k-1} \lor \PR_T(\gn{\Box \xi_0 \lor \cdots \lor \Box \xi_{k-1}}).
\] 
By $\DB$-$\DC$, we have 
\begin{equation}\label{dis1}
	T \vdash \Box \xi_0 \lor \cdots \lor \Box \xi_{k-1}.
\end{equation} 
Since 
\[
	\NS \models \exists y \bigl(\Prf_T(\gn{\Box \xi_0 \lor \cdots \lor \Box \xi_{k-1}}, y) \land \forall x \leq y \, \neg \delta(x) \bigr),
\]
this sentence is provable in $\PA(\K)$. 
Thus, 
\[
	\PA(\K) \vdash \neg \exists x \bigl(\delta(x) \land \forall y < x \, \neg \Prf_T(\gn{\Box \xi_0 \lor \cdots \lor \Box \xi_{k-1}}, y) \bigr). 
\]
Then, by the choice of $\xi_i$, $\PA(\K) \vdash \xi_i \to \psi_i$ for each $i$. 
From (\ref{dis1}), we conclude $T \vdash \Box \psi_0 \lor \cdots \lor \Box \psi_{k-1}$ and hence $T \vdash \varphi$. 
\end{proof}

In the statements of Propositions \ref{DPtoS1sound} and \ref{DPtoDC}, the condition ``$T \nvdash \Box \bot$'' is assumed. 
On the other hand, for consistent theories $T$ with $T \vdash \Box \bot$, the situation changes. 
Indeed, every $\B$ formula is provable in such a theory $T$. 
Thus, $T$ does not have $\MDP$ and $\MEP$. 
Also, every $\DB$ formula is $T$-provably equivalent to some $\Delta_0$ formula. 
Moreover, every $\DB$ sentence $\varphi$ is either provable or refutable in $T$. 
Therefore, we obtain the following proposition.

\begin{prop}\label{Verprov}
Let $T$ be any extension of $\PA(\Ver)$. 
Then, $T$ has $\DB$-$\DP$, $(\DB, \Sigma_1)$-$\DP$, and $\B$-$\EP$. 
Also, $T$ is $\B$-$\DC$. 
\end{prop}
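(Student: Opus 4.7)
The plan is to exploit the single observation that in any extension $T$ of $\PA(\Ver)$, every formula of the form $\Box \chi$ is $T$-provable. Indeed, $\PA \vdash \bot \to \chi$ yields $\PA(\K) \vdash \Box \bot \to \Box \chi$ via necessitation and the $\K$-axiom, and $T \vdash \Box \bot$. From this, every $\B$ sentence is $T$-provable, and I expect the other properties to fall out as easy consequences of this fact combined with Proposition \ref{DBSent}.

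For the two disjunction properties, I would first derive that every $\DB$ sentence is either provable or refutable in $T$. Given a $\DB$ sentence $\varphi$, Proposition \ref{DBSent} supplies sentences $\psi_0, \ldots, \psi_{k-1}$ with $\PA_\Box \vdash \varphi \leftrightarrow \bigvee_{i < k} \Box \psi_i$. If $k = 0$, then $\PA_\Box \vdash \neg \varphi$; if $k > 0$, the observation above gives $T \vdash \Box \psi_0$ and hence $T \vdash \varphi$. So now, given $T \vdash \varphi \lor \psi$ with $\varphi \in \DB$ and $\psi \in \DB$ (respectively $\psi \in \Sigma_1$), either $T \vdash \varphi$, or $T \vdash \neg \varphi$, in which case $T \vdash \psi$. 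This handles both $\DB$-$\DP$ and $(\DB, \Sigma_1)$-$\DP$ uniformly.

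For $\B$-$\EP$, suppose $T \vdash \exists x\, \Box \psi(x)$ with $\Box \psi(x)$ a $\B$ formula whose only free variable is $x$. Taking $n = 0$, the opening observation instantly gives $T \vdash \Box \psi(\overline{0})$, so the required witness is trivial. For $\B$-$\DC$, suppose $\varphi$ is a $\B$ sentence (so $\varphi \equiv \Box \chi$ for some sentence $\chi$) and $T \vdash \varphi \lor \PR_T(\gn{\varphi})$; again the observation yields $T \vdash \varphi$ outright, without even using the disjunction hypothesis.

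There is no serious obstacle here, since everything reduces to the single collapse $T \vdash \Box \chi$ for every $\chi$, together with the normal form from Proposition \ref{DBSent}. The only care needed is to state each step in a way that makes it clear we are using only $\PA(\K)$-reasoning above the axiom $\Box \bot$, and to note (for $\B$-$\DC$) that the definition of $\B$-$\DC$ presupposes $T$ to be r.e.
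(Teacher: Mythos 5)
Your proposal is correct and follows essentially the same route as the paper: the paper's (implicit) justification is precisely that in any extension of $\PA(\Ver)$ every $\B$ formula is provable and every $\DB$ sentence is either provable or refutable, from which all four properties follow immediately. Your use of Proposition \ref{DBSent} to establish the provable-or-refutable dichotomy, and your remark that the $\B$-$\DC$ clause presupposes $T$ to be r.e., are both consistent with the paper's intent.
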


From Propositions \ref{DPtoDC}, \ref{DCtoSDP}, and \ref{Verprov}, we have: 

\begin{cor}\label{CorB-DB}
Let $T$ be any consistent r.e.~extension of $\PA(\K)$. 
\begin{enumerate}
	\item If $T \nvdash \Box \bot$, then $T$ has $(\DB, \Sigma_1)$-$\DP$ if and only if $T$ is $\DB$-$\DC$;  
	\item $T$ has $(\B, \Sigma_1)$-$\DP$ if and only if $T$ is $\B$-$\DC$.  
\end{enumerate}
\end{cor}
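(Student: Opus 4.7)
The plan is to obtain each direction of each equivalence from the three propositions cited, taking care of the case $T \vdash \Box \bot$ separately in Clause 2 since no hypothesis on $\Box \bot$ is assumed there.

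For Clause 1, the hypothesis $T \nvdash \Box \bot$ is built in, so both directions follow from a direct appeal. The forward direction $(\DB, \Sigma_1)\textup{-}\DP \Rightarrow \DB\textup{-}\DC$ is the instance $\Gamma = \DB$ of Proposition \ref{DPtoDC}, which applies because $\B \subseteq \DB$ and $T$ is a consistent r.e.~extension of $\PA(\K)$ with $T \nvdash \Box \bot$. The reverse direction is exactly Clause 1 of Proposition \ref{DCtoSDP}.

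For Clause 2, I would split on whether $T \vdash \Box \bot$. If $T \nvdash \Box \bot$, the argument mirrors Clause 1: the forward direction is Proposition \ref{DPtoDC} with $\Gamma = \B$ (trivially $\B \subseteq \B$), and the reverse direction is Clause 2 of Proposition \ref{DCtoSDP}. If $T \vdash \Box \bot$, then $T$ is an extension of $\PA(\Ver)$, so Proposition \ref{Verprov} directly yields that $T$ is $\B\textup{-}\DC$ and has $(\DB, \Sigma_1)\textup{-}\DP$; the latter implies $(\B, \Sigma_1)\textup{-}\DP$ because $\B \subseteq \DB$. Hence both sides of the stated equivalence hold unconditionally, so the equivalence is vacuously true in this case.

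Since each piece is a direct invocation of a named proposition, there is no substantive obstacle; the only subtlety to keep in mind is that the hypothesis of Proposition \ref{DPtoDC} requires $T \nvdash \Box \bot$, which is why Clause 2 needs the case split that Clause 1 avoids by assumption. After dispatching both cases, the proof concludes.
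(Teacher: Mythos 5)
Your proof is correct and matches the paper's own derivation, which simply cites Propositions \ref{DPtoDC}, \ref{DCtoSDP}, and \ref{Verprov} for exactly the roles you assign them. The case split on $T \vdash \Box \bot$ in Clause 2, with Proposition \ref{Verprov} handling the $\PA(\Ver)$-extension case, is precisely the intended reading.
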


For consistent r.e.~extensions of $\PA(\Ver)$, $\DB$-$\DC$ is strictly weaker than $\Sigma_1$-soundness. 

\begin{prop}\label{nCon}
Let $T$ be any consistent r.e.~extension of $\PA(\Ver)$. 
Then, the following are equivalent:
\begin{enumerate}
	\item $T$ is $\DB$-$\DC$. 
	\item $T \nvdash \neg \Con_T$. 
\end{enumerate}
\end{prop}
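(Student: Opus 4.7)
The plan is to combine Proposition \ref{DBSent} (which reduces every $\DB$ sentence to a disjunction of $\B$ sentences) with the fact that over $\PA(\Ver)$ every $\B$ sentence is outright provable, so the only nontrivial information a $\DB$ sentence can carry is whether it reduces to $\bot$ or not.

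For the direction $(1) \Rightarrow (2)$, I would take $\varphi$ to be $\bot$, which lies in $\Delta_0 \subseteq \DB$. If $T \vdash \neg \Con_T$, that is, $T \vdash \PR_T(\gn{\bot})$, then trivially $T \vdash \bot \lor \PR_T(\gn{\bot})$. Applying $\DB$-$\DC$ to $\bot$ would give $T \vdash \bot$, contradicting the consistency of $T$. Hence $T \nvdash \neg \Con_T$.

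For the direction $(2) \Rightarrow (1)$, suppose $T \nvdash \neg \Con_T$ and let $\varphi$ be an arbitrary $\DB$ sentence with $T \vdash \varphi \lor \PR_T(\gn{\varphi})$. By Proposition \ref{DBSent}, fix $k$ and sentences $\psi_0, \ldots, \psi_{k-1}$ with $\PA_\Box \vdash \varphi \leftrightarrow \bigvee_{i<k} \Box \psi_i$. I would split on $k$. If $k \geq 1$, then since $T$ extends $\PA(\Ver)$ we have $T \vdash \Box \bot$, and by $\PA(\K)$ the rule \textsc{Nec} together with distribution yields $T \vdash \Box \psi_0$; thus $T \vdash \bigvee_{i<k} \Box \psi_i$ and so $T \vdash \varphi$ directly, without needing the hypothesis. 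If $k = 0$, then $\varphi$ is $\PA_\Box$-equivalent to $\bot$, hence $T$-equivalent to $\bot$; formalizing the $T$-provability of $\varphi \to \bot$ inside $\PA$ and using the standard provability condition $\PR_T(\gn{\alpha \to \beta}) \to (\PR_T(\gn{\alpha}) \to \PR_T(\gn{\beta}))$ yields $T \vdash \PR_T(\gn{\varphi}) \to \PR_T(\gn{\bot})$. The assumed disjunction $T \vdash \varphi \lor \PR_T(\gn{\varphi})$ then collapses to $T \vdash \PR_T(\gn{\bot})$, i.e.\ $T \vdash \neg \Con_T$, contradicting (2). So in the $k = 0$ case the hypothesis is vacuous, and in the $k \geq 1$ case the conclusion holds outright.

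I do not expect a genuine obstacle here; the argument is essentially a bookkeeping exercise once Proposition \ref{DBSent} is available. The only point requiring a little care is the $k=0$ case, where one must cleanly separate the $\PA_\Box$-equivalence $\varphi \leftrightarrow \bot$ from the formal claim about $\PR_T(\gn{\varphi})$, and invoke the standard provability conditions (and the fact that provability in the r.e.\ theory $T$ is $\PA$-verifiably closed under implication) to pass from $T$-provable equivalence of $\varphi$ with $\bot$ to $\PA$-provable equivalence of $\PR_T(\gn{\varphi})$ with $\PR_T(\gn{\bot})$.
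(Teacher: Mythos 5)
Your argument is correct and follows essentially the same route as the paper: both directions hinge on the observation that over $\PA(\Ver)$ every $\DB$ sentence is either provable or refutable (so $\DB$-$\DC$ reduces to the single instance $\varphi \equiv \bot$), the provable case being immediate and the refutable case collapsing the hypothesis to $T \vdash \neg\Con_T$. Your explicit case split on $k$ via Proposition \ref{DBSent} is just a slightly more detailed way of establishing the same dichotomy the paper invokes directly.
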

\begin{proof}
$(1 \Rightarrow 2)$: 
Suppose that $T$ is $\DB$-$\DC$. 
Since $T \nvdash \bot$, we obtain $T \nvdash \PR_T(\gn{\bot}) \lor \bot$. 
Hence, $T \nvdash \neg \Con_T$. 

$(2 \Rightarrow 1)$: 
Suppose $T \nvdash \neg \Con_T$. 
Let $\varphi$ be any $\DB$ sentence with $T \vdash \PR_T(\gn{\varphi}) \lor \varphi$. 
If $T \vdash \neg \varphi$, then $\varphi$ is $T$-equivalent to $\bot$. 
We have $T \vdash \PR_T(\gn{\bot}) \lor \bot$, and so $T \vdash \neg \Con_T$. 
This is a contradiction. 
Therefore, $T \vdash \varphi$. 
\end{proof}

There are $\mathcal{L}_A$-sound theories that do not have even $(\B, \Sigma_1)$-$\DP$.

\begin{prop}\label{nDP}\leavevmode
\begin{enumerate}
	\item $\PA(\Triv)$ does not have $(\B, \Sigma_1)$-$\DP$. 
	\item Let $T$ be any r.e.~theory such that $\PA(\Ver) \vdash T \vdash \PA(\KF)$ and let $U : = T + \{\PR_T(\gn{\Box \bot}) \lor \Box \bot\}$. 
If $T \nvdash \Box \bot$, then $U$ is $\mathcal{L}_A$-sound but is not $\B$-$\DC$. 
\end{enumerate}
\end{prop}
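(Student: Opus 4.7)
The plan is to handle the two clauses by constructing explicit witnesses. For Clause 1, I will exploit that the $\Triv$-axiom $\Box\varphi \leftrightarrow \varphi$ makes every $\LB$-sentence $\PA(\Triv)$-equivalent to a $\B$-sentence, which reduces the task to finding an $\mathcal{L}_A$-sentence $\chi$ and a $\Sigma_1$ sentence $\sigma$ whose disjunction is a tautology while neither disjunct is $\PA(\Triv)$-provable. For Clause 2, I will first show $U \nvdash \Box\bot$ by combining the $\SB$-deduction theorem (Corollary \ref{SBDT}) with L\"ob's theorem, then read off failure of $\B$-$\DC$ directly from the new axiom, and finally derive $\mathcal{L}_A$-soundness from the inclusion $T \subseteq \PA(\Ver)$ together with Corollary \ref{Ver}.

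For Clause 1, take $\chi := \Con_{\PA(\Triv)}$, $\varphi := \Box\chi \in \B$, and $\sigma := \PR_{\PA(\Triv)}(\gn{\bot}) \in \Sigma_1$. Since $\chi$ abbreviates $\neg\sigma$, the sentence $\chi \lor \sigma$ is a propositional tautology, and the $\Triv$-axiom applied to $\chi$ then yields $\PA(\Triv) \vdash \varphi \lor \sigma$. On the other hand, $\PA(\Triv) \vdash \varphi$ would collapse via $\Triv$ to $\PA(\Triv) \vdash \Con_{\PA(\Triv)}$, contradicting G\"odel's second incompleteness theorem applied to the consistent r.e.~extension $\PA(\Triv)$ of $\PA$ (consistency via Corollary \ref{Triv}); and $\PA(\Triv) \vdash \sigma$ would immediately contradict the $\mathcal{L}_A$-soundness guaranteed by the same corollary. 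This completes Clause 1.

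For Clause 2, first observe that $\theta := \PR_T(\gn{\Box\bot}) \lor \Box\bot$ is an $\SB$-sentence. Suppose, toward a contradiction, that $U \vdash \Box\bot$. By Corollary \ref{SBDT}, $T \vdash \theta \to \Box\bot$, hence $T \vdash \PR_T(\gn{\Box\bot}) \to \Box\bot$. Since $T \supseteq \PA(\KF) \supseteq \PA$ and $\PR_T$ satisfies the standard Hilbert--Bernays derivability conditions for arbitrary $\LB$-sentences, L\"ob's theorem (applied with $\Box\bot$ as the fixed sentence) yields $T \vdash \Box\bot$, contradicting the hypothesis. Hence $U \nvdash \Box\bot$. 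Since $U$ is an r.e.~extension of $T$, we have $\PA \vdash \PR_T(x) \to \PR_U(x)$, and combining this with the axiom $\theta$ gives $U \vdash \PR_U(\gn{\Box\bot}) \lor \Box\bot$. Thus $\Box\bot \in \B$ witnesses the failure of $\B$-$\DC$ in $U$.

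For the $\mathcal{L}_A$-soundness of $U$: if $\chi$ is any $\mathcal{L}_A$-sentence with $U \vdash \chi$, Corollary \ref{SBDT} again gives $T \vdash \theta \to \chi$, and hence in particular $T \vdash \Box\bot \to \chi$. Since $T \subseteq \PA(\Ver)$ and $\PA(\Ver) \vdash \Box\bot$, we obtain $\PA(\Ver) \vdash \chi$, whence $\NS \models \chi$ by Corollary \ref{Ver}. The step I expect to require the most care is the application of L\"ob's theorem to the $\LB$-sentence $\Box\bot$: one must confirm that the three derivability conditions on $\PR_T$ extend from $\mathcal{L}_A$-formulas to all $\LB$-formulas, which follows because $\PR_T$ is built uniformly from the primitive recursive proof predicate $\Prf_T$ and the standard Hilbert--Bernays verification carried out inside $\PA_\Box$ works uniformly in the G\"odel number of the target sentence.
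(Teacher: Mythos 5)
Your proof is correct and takes essentially the same approach as the paper's. Clause 1 rests on the same idea of boxing a $\Pi_1$ sentence independent of $\PA(\Triv)$ via the $\Triv$ axiom (you use $\Con_{\PA(\Triv)}$ and G\"odel's second incompleteness theorem where the paper uses a G\"odel sentence of $\PA$ plus conservativity), and Clause 2 is the same $\SB$-deduction-theorem-plus-L\"ob argument, the only cosmetic difference being that the paper gets $\mathcal{L}_A$-soundness directly from the inclusion $U \subseteq \PA(\Ver)$ rather than by a second appeal to Corollary \ref{SBDT}.
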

\begin{proof}
1. Let $\varphi$ be a $\Pi_1$ G\"odel sentence of $\PA$. 
Since $\PA \vdash \varphi \lor \neg \varphi$, we have $\PA(\Triv) \vdash \Box \varphi \lor \neg \varphi$. 
Since $\PA(\Triv)$ is a conservative extension of $\PA$ (Corollary \ref{Triv}), $\PA(\Triv) \nvdash \varphi$ and $\PA(\Triv) \nvdash \neg \varphi$. 
Then, $\PA(\Triv) \nvdash \Box \varphi$ and $\PA(\Triv) \nvdash \neg \varphi$. 

2. Since $U$ is a subtheory of $\PA(\Ver)$, $U$ is $\mathcal{L}_A$-sound by Corollary \ref{Ver}. 
Since $U \vdash \PR_{T}(\gn{\Box \bot}) \lor \Box \bot$, we have $U \vdash \PR_U(\gn{\Box \bot}) \lor \Box \bot$. 
Suppose, towards a contradiction, $U \vdash \Box \bot$. 
Since $\PR_{T}(\gn{\Box \bot}) \lor \Box \bot$ is a $\SB$ sentence, $T \vdash \PR_{T}(\gn{\Box \bot}) \lor \Box \bot \to \Box \bot$ by the $\SB$-deduction theorem (Corollary \ref{SBDT}). 
In particular, $T \vdash \PR_{T}(\gn{\Box \bot}) \to \Box \bot$. 
By L\"ob's theorem, $T \vdash \Box \bot$. 
This is a contradiction. 
Therefore, $U \nvdash \Box \bot$. 
Hence, $U$ is not $\B$-$\DC$. 
\end{proof}

\section{$\SB$-$\DP$ and related properties}\label{Sec:SB}

First of all, we consider the case that $T$ proves $\Box \bot$. 

\begin{prop}\label{SBDPEP}
Let $T$ be any consistent r.e.~extension of $\PA(\Ver)$. 
Then, the following are equivalent:
\begin{enumerate}
	\item $T$ is $\Sigma_1$-sound.
	\item $T$ has $\SB$-$\DP$. 
	\item $T$ has $(\SB, \Sigma_1)$-$\DP$. 
	\item $T$ has $\SB$-$\EP$. 
	\item $T$ has $\DB$-$\EP$. 
	\item $T$ is $\SB$-$\DC$.	
\end{enumerate}
\end{prop}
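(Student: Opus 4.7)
The key observation is that since $T$ extends $\PA(\Ver)$, we have $T \vdash \varphi \leftrightarrow \beta(\varphi)$ for every $\LB$-formula $\varphi$, as remarked before Proposition \ref{beta}. A straightforward induction on the construction of $\varphi$ shows that $\beta$ sends every $\SB$ formula to a $\Sigma_1$ formula and every $\DB$ formula to a $\Delta_0$ formula (up to $\PA$-provable equivalence), since the $\B$-atoms $\Box \psi$ are mapped to $0=0$, the $\Sigma_1$- and $\Delta_0$-atoms are fixed, and both $\Sigma_1$ and $\Delta_0$ are closed under the connectives appearing in the respective definitions. Consequently every $\SB$ (resp.\ $\DB$) formula is $T$-provably equivalent to a $\Sigma_1$ (resp.\ $\Delta_0$) formula with the same free variables, and $\beta$ commutes with numeral substitution. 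This reduces the proposition to classical facts about $\Sigma_1$ sentences in consistent r.e.~extensions of $\PA$.

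The plan is to establish the cycle $(1) \Rightarrow (4) \Rightarrow (5) \Rightarrow (1)$, then $(4) \Rightarrow (2) \Rightarrow (3) \Rightarrow (1)$, and finally $(1) \Leftrightarrow (6)$. For $(1) \Rightarrow (4)$: if $T \vdash \exists x \varphi(x)$ with $\varphi \in \SB$, then $T \vdash \exists x \beta(\varphi)(x)$ is a provable $\Sigma_1$ sentence; $\Sigma_1$-soundness yields a standard witness $n$, $\Sigma_1$-completeness of $\PA$ gives $T \vdash \beta(\varphi)(\overline{n})$, and the $\beta$-equivalence then gives $T \vdash \varphi(\overline{n})$. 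The implication $(4) \Rightarrow (5)$ is immediate since $\DB \subseteq \SB$. For $(5) \Rightarrow (1)$: any $T$-provable $\Sigma_1$ sentence has the form $\exists x \delta(x)$ with $\delta \in \Delta_0 \subseteq \DB$, so $\DB$-$\EP$ yields $n$ with $T \vdash \delta(\overline{n})$; consistency of $T \supseteq \PA$ together with $\Delta_0$-completeness of $\PA$ then forces $\NS \models \delta(\overline{n})$, hence $\NS \models \exists x \delta(x)$.

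For $(4) \Rightarrow (2)$ we invoke Proposition \ref{EPtoDP}.2, and $(2) \Rightarrow (3)$ is trivial since $\Sigma_1 \subseteq \SB$. For $(3) \Rightarrow (1)$: $(\SB, \Sigma_1)$-$\DP$ specializes to $\Sigma_1$-$\DP$, which in turn yields $\Sigma_1$-soundness by Guaspari's theorem recalled in the introduction. For $(1) \Leftrightarrow (6)$: $\Sigma_1$-$\DC$ and $\Sigma_1$-soundness coincide for consistent r.e.~extensions of $\PA$ (also noted in the introduction), while $\SB$-$\DC$ transfers to $\Sigma_1$-$\DC$ via $\beta$, using that $\PA \vdash \PR_T(\gn{\varphi}) \leftrightarrow \PR_T(\gn{\beta(\varphi)})$ follows from formalized $\Sigma_1$-completeness applied to the $T$-provable equivalence $\varphi \leftrightarrow \beta(\varphi)$.

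The main obstacle is the routine but careful bookkeeping needed to verify that $\beta$ genuinely respects the class structures $\SB$ and $\DB$, commutes with numeral substitution, and interacts correctly with provability predicates. Once these verifications are in place, each clause of the proposition reduces via a uniform translation argument to a well-known property of $\Sigma_1$ formulas over consistent r.e.~extensions of $\PA$.
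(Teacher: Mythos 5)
Your proof is correct and takes essentially the same route as the paper: both reduce the proposition to the classical $\Sigma_1$/$\Delta_0$ facts (Guaspari's theorem, the $\Sigma_1$-$\EP$/$\Delta_0$-$\EP$/$\Sigma_1$-soundness equivalences, and the $\Sigma_1$-$\DC$ characterization) via the observation that over $\PA(\Ver)$ every $\SB$ (resp.\ $\DB$) formula collapses to a $\Sigma_1$ (resp.\ $\Delta_0$) one. You merely make explicit, via the $\beta$-translation and the formalized provability equivalence $\PR_T(\gn{\varphi}) \leftrightarrow \PR_T(\gn{\beta(\varphi)})$, some bookkeeping that the paper leaves implicit.
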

\begin{proof}
Since every $\SB$ (resp.~$\DB$) formula is $T$-provably equivalent to some $\Sigma_1$ (resp.~$\Delta_0$) formula, we have $(2 \Leftrightarrow 3)$. 
Also by Guaspari's theorem (\cite{Gua79}) on the equivalence of the $\Sigma_1$-soundness and $\Sigma_1$-$\DP$, the equivalence $(1 \Leftrightarrow 2)$ holds. 
Moreover, the implications ``$\Sigma_1$-sound $\Rightarrow \Sigma_1$-$\EP$'', ``$\Sigma_1$-$\EP \Rightarrow \Delta_0$-$\EP$'', and ``$\Delta_0$-$\EP \Rightarrow \Sigma_1$-sound'' are easily verified, we obtain that Clauses 1, 4, and 5 are pairwise equivalent. 
Finally, since the equivalence of the $\Sigma_1$-soundness and $\Sigma_1$-$\DC$ is shown in (\cite{Kur18}), we get $(1 \Leftrightarrow 6)$. 
\end{proof}

\begin{cor}\label{VerEP}
$\PA(\Ver)$ has $\SB$-$\EP$ but does not have $\MDP$. 
\end{cor}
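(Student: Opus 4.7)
The plan is to derive both halves of Corollary \ref{VerEP} directly from results already established in the paper, treating $\PA(\Ver)$ as the test case $T = \PA(\Ver)$ of Proposition \ref{SBDPEP}.

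For the positive half ($\SB$-$\EP$), I would first observe that $\PA(\Ver)$ is a consistent r.e.~extension of itself, so Proposition \ref{SBDPEP} applies. By Corollary \ref{Ver}, $\PA(\Ver)$ is $\mathcal{L}_A$-sound, and in particular $\Sigma_1$-sound, since every $\Sigma_1$ sentence is an $\mathcal{L}_A$-sentence. Clause $(1 \Rightarrow 4)$ of Proposition \ref{SBDPEP} then yields $\SB$-$\EP$ immediately.

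For the negative half ($\neg \MDP$), the key observation is that $\Box \bot$ is an axiom of $\PA(\Ver)$, so $\PA(\Ver) \vdash \Box \bot \lor \Box \bot$. If $\PA(\Ver)$ enjoyed $\MDP$, this would force $\PA(\Ver) \vdash \bot$, contradicting the consistency guaranteed by its $\mathcal{L}_A$-soundness (Corollary \ref{Ver}). Hence $\MDP$ fails.

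There is no real obstacle here; the whole content is a bookkeeping consequence of Proposition \ref{SBDPEP} together with the $\mathcal{L}_A$-soundness of $\PA(\Ver)$. The only mild point worth stating explicitly is that $\MDP$ catastrophically fails for any theory proving $\Box \bot$ (so the gap between $\SB$-$\EP$ and $\MDP$ witnessed by $\PA(\Ver)$ is as wide as possible), which is exactly the informal message the corollary is designed to record.
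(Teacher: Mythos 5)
Your proposal is correct and follows essentially the same route as the paper: the positive half via the $\Sigma_1$-soundness of $\PA(\Ver)$ (Corollary \ref{Ver}) fed into Proposition \ref{SBDPEP}, and the negative half from $\PA(\Ver) \vdash \Box\bot$ together with consistency. The only difference is presentational — you spell out the instance $\Box\bot \lor \Box\bot$ explicitly, which the paper leaves implicit.
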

\begin{proof}
Since $\PA(\Ver)$ is $\Sigma_1$-sound by Corollary \ref{Ver}, $\PA(\Ver)$ has $\SB$-$\EP$. 
On the other hand, $\PA(\Ver) \vdash \Box \bot$ and $\PA(\Ver) \nvdash \bot$, and thus $\PA(\Ver)$ does not have $\MDP$. 
\end{proof}

We then discuss theories in which $\Box \bot$ is not necessarily provable. 
Unlike the cases of $\DB$ and $\B$ (Proposition \ref{DPtoSDP2}), $(\SB, \Sigma_1)$-$\DP$ directly follows from $\SB$-$\DP$ because $\Sigma_1 \subseteq \SB$. 
Also, as in the cases of $\DB$ and $\B$ (Proposition \ref{DCtoSDP}), we obtain the following proposition. 

\begin{prop}\label{SBDC}
Let $T$ be any r.e.~$\LB$-theory extending $\PA$. 
If $T$ is $\SB$-$\DC$, then $T$ has $(\SB, \Sigma_1)$-$\DP$. 
\end{prop}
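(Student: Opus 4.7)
The plan is to adapt the argument of Proposition~\ref{DCtoSDP} to the setting of $\SB$, exploiting the fact that $\Sigma_1 \subseteq \SB$ so that only a single self-referential sentence, rather than a tuple, is needed. The basic idea is: given $T \vdash \varphi \lor \psi$ with $\varphi \in \SB$ and $\psi \in \Sigma_1$, manufacture a $\SB$ fixed point $\xi$ that allows the disjunction to be rewritten in the shape $\xi \lor \PR_T(\gn{\xi})$ so that $\SB$-$\DC$ can be invoked.

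First I would suppose $T$ is $\SB$-$\DC$ and take a $\SB$ sentence $\varphi$ and a $\Sigma_1$ sentence $\psi$ with $T \vdash \varphi \lor \psi$. To prove the $(\SB,\Sigma_1)$-$\DP$, I assume $T \nvdash \psi$ and aim at $T \vdash \varphi$. Write $\psi$ as $\exists x\,\delta(x)$ with $\delta \in \Delta_0$; since $T$ is consistent in the weaker sense that it does not prove $\psi$, we have $\NS \models \forall x\,\neg\delta(x)$. Via the Fixed Point Lemma (Lemma~\ref{FPL}), I would produce a $\SB$ sentence $\xi$ satisfying
\[
	T \vdash \xi \leftrightarrow \Bigl[\varphi \lor \exists x \bigl(\delta(x) \land \forall y < x\, \neg\Prf_T(\gn{\xi},y) \bigr) \Bigr].
\]
This requires checking that the right-hand side is $\SB$, which follows because $\Prf_T$ is $\Delta_0$ and $\SB$ is closed under $\land$, $\lor$, bounded universal quantification, and unbounded existential quantification.

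The next step is to show inside $\PA$ that $\varphi \to \xi$ (immediate from the fixed-point equivalence) and that $\psi \to \xi \lor \PR_T(\gn{\xi})$ (by splitting on whether the least witness of $\delta$ precedes a $T$-proof of $\xi$). Combining these with $T \vdash \varphi \lor \psi$ yields $T \vdash \xi \lor \PR_T(\gn{\xi})$, and since $\xi \in \SB$, applying $\SB$-$\DC$ delivers $T \vdash \xi$. Finally, picking a concrete standard $T$-proof $p$ of $\xi$, I obtain $\PA \vdash \Prf_T(\gn{\xi},\overline{p})$; within $T$, the fixed-point equivalence together with $\xi$ then shows that either $\varphi$ holds, or there is an $x \leq \overline{p}$ with $\delta(x)$. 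The latter alternative is a false $\Delta_0$ statement, so $\PA$ refutes it, and hence $T \vdash \varphi$.

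The main obstacle is purely the complexity bookkeeping: one must verify that the self-referential formula stays inside $\SB$ so that $\SB$-$\DC$ is actually applicable to $\xi \lor \PR_T(\gn{\xi})$. The delicate feature is that $\SB$, unlike $\Sigma_1$, need not be closed under negation, so one has to be careful to introduce only $\neg\Prf_T$, which remains $\Delta_0$, and to use only bounded universal and unbounded existential quantification. Once this closure check is in hand, the rest follows the familiar pattern used in Proposition~\ref{DCtoSDP}.
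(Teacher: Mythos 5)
Your proof is correct and follows essentially the same route as the paper's: both construct a self-referential sentence asserting ``some $x$ witnesses $\delta$ before any $T$-proof of the fixed point appears,'' reduce $T \vdash \varphi \lor \exists x\,\delta(x)$ to a statement of the form $\chi \lor \PR_T(\gn{\chi})$ with $\chi \in \SB$, apply $\SB$-$\DC$, and then discharge the spurious disjunct using a standard proof code and the falsity of $\delta$. The only (cosmetic) difference is that you fold $\varphi$ into the fixed point $\xi$ itself, whereas the paper diagonalizes only a $\Sigma_1$ sentence $\sigma$ and applies $\SB$-$\DC$ to $\varphi \lor \sigma$.
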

\begin{proof}
Suppose that $T$ is $\SB$-$\DC$. 
Let $\varphi$ be any $\SB$ sentence and $\delta(x)$ be any $\Delta_0$ formula such that $T \vdash \varphi \lor \exists x \delta(x)$ and $T \nvdash \exists x \delta(x)$. 
We would like to show $T \vdash \varphi$. 
In this case, $\NS \models \forall x \neg \delta(x)$. 
Let $\sigma$ be a $\Sigma_1$ sentence satisfying
\begin{equation}\label{eqSBDC}
	\PA \vdash \sigma \leftrightarrow \exists x \bigl(\delta(x) \land \forall y < x \, \neg \Prf_T(\gn{\varphi \lor \sigma}, y) \bigr).
\end{equation}
Since $T$ is an extension of $\PA$ and $\sigma$ is $\Sigma_1$, we have $\PA \vdash \sigma \to \PR_T(\gn{\sigma})$, and hence $\PA \vdash \sigma \to \PR_T(\gn{\varphi \lor \sigma})$. 
By the equivalence (\ref{eqSBDC}), we obtain 
\[
	\PA \vdash \exists x \delta(x) \land \neg \Prf_T(\gn{\varphi \lor \sigma}) \to \sigma.
\]
It follows $\PA \vdash \exists x \delta(x) \land \neg \Prf_T(\gn{\varphi \lor \sigma}) \to \PR_T(\gn{\varphi \lor \sigma})$, and hence
\[
	\PA \vdash \exists x \delta(x) \to \Prf_T(\gn{\varphi \lor \sigma}).
\]
Since $T \vdash \varphi \lor \exists x \delta(x)$, we obtain $T \vdash (\varphi \lor \sigma) \lor \PR_T(\gn{\varphi \lor \sigma})$. 
Since $\varphi \lor \sigma$ is a $\SB$ sentence, by $\SB$-$\DC$, we have $T \vdash \varphi \lor \sigma$. 
Since $\NS \models \forall x \neg \delta(x)$, $\NS \models \exists y \bigl(\Prf_T(\gn{\varphi \lor \sigma}, y) \land \forall x \leq y \, \neg \delta(x) \bigr)$ and this is provable in $T$. 
Then, $T \vdash \neg \sigma$, and thus $T \vdash \varphi$. 
\end{proof}

From Propositions \ref{DPtoDC}, \ref{SBDPEP}, and \ref{SBDC}, we obtain the following corollary. 

\begin{cor}\label{SBDPDC}
For any r.e.~extension $T$ of $\PA(\K)$, $T$ has $(\SB, \Sigma_1)$-$\DP$ if and only if $T$ is $\SB$-$\DC$. 
\end{cor}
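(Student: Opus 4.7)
The plan is to observe that one direction is already essentially in hand and then handle the converse by a case split on whether $T$ proves $\Box\bot$.

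For the direction $(\Leftarrow)$, Proposition \ref{SBDC} gives immediately that $\SB$-$\DC$ implies $(\SB,\Sigma_1)$-$\DP$, with no further hypotheses on $T$ beyond being an r.e.\ extension of $\PA$; so this half is finished by citing that result.

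For the direction $(\Rightarrow)$, assume $T$ has $(\SB,\Sigma_1)$-$\DP$ and split into cases. If $T$ is inconsistent, then $T$ proves every $\SB$ sentence and hence is trivially $\SB$-$\DC$. If $T$ is consistent and $T \nvdash \Box\bot$, then since $\B \subseteq \SB$, Proposition \ref{DPtoDC} applied with $\Gamma = \SB$ delivers $\SB$-$\DC$ directly. The remaining case is where $T$ is consistent and $T \vdash \Box\bot$; here, since $T$ extends $\PA(\K)$ and contains $\Box\bot$, it is an extension of $\PA(\Ver)$, so Proposition \ref{SBDPEP} applies: among the equivalences listed there, $(\SB,\Sigma_1)$-$\DP$ and $\SB$-$\DC$ both appear, and therefore $(\SB,\Sigma_1)$-$\DP$ yields $\SB$-$\DC$ in this case as well.

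There is essentially no obstacle here beyond correctly assembling the existing pieces; the only minor subtlety is noticing that Proposition \ref{DPtoDC} imposes the side condition $T \nvdash \Box\bot$, which is exactly why the case split on whether $\Box\bot$ is provable is needed, with the complementary case being absorbed by Proposition \ref{SBDPEP} once one observes that $T \vdash \Box\bot$ forces $T$ to extend $\PA(\Ver)$.
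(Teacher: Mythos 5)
Your proof is correct and is essentially the paper's own argument: the paper derives this corollary by combining Propositions \ref{DPtoDC}, \ref{SBDPEP}, and \ref{SBDC}, which is exactly the assembly you carry out, including the case split on whether $T \vdash \Box\bot$ (with the $\Box\bot$-provable case absorbed by Proposition \ref{SBDPEP} via $\PA(\Ver)$). No gaps.
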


Before proving our main theorem of this section, we prepare some notations and lemmas. 

\begin{definition}
For each $\SB$ formula $\varphi$, we define the $\LB$-formula $\varphi^-$ inductively as follows: 
\begin{enumerate}
	\item If $\varphi$ is $\Sigma_1$, then $\varphi^- : \equiv \varphi$; 
	\item If $\varphi$ is of the form $\Box \psi$, then $\varphi^- : \equiv \psi$; 
	\item Otherwise if $\varphi$ is of the form $\psi \land \sigma$, $\psi \lor \sigma$, $\exists x \psi$ or $\forall x < t\, \psi$, then $\varphi^-$ is respectively $\psi^- \land \sigma^-$, $\psi^- \lor \sigma^-$, $\exists x \psi^-$ or $\forall x < t\, \psi^-$. 
\end{enumerate}
\end{definition}

The operation $(\cdot)^-$ removes the outermost $\Box$ of nested occurrences of $\Box$'s in the formula. 
For example, $(\Box \Box \varphi \lor \Box \psi)^-$ is $\Box \varphi \lor \psi$. 
The following lemma is a strengthening of Theorem \ref{S1compl}. 

\begin{lem}\label{Minus}
For any $\SB$ formula $\varphi$, $\PA(\K) \vdash \varphi \to \Box (\varphi^-)$. 
\end{lem}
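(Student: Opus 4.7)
The plan is to prove the lemma by induction on the construction of the $\SB$ formula $\varphi$, closely mirroring the structure of the proof of Theorem \ref{SBcompl}, but exploiting the fact that the $(\cdot)^-$ operation strips one $\Box$ from any formula of the form $\Box\psi$. This stripping is what will allow us to work in $\PA(\K)$ rather than $\PA(\KF)$: in the proof of the ordinary formalized $\SB$-completeness theorem, the case $\varphi \equiv \Box\psi$ needed $\Box\psi \to \Box\Box\psi$, whereas here $(\Box\psi)^- \equiv \psi$, so we only need the trivial $\Box\psi \to \Box\psi$.

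In the base cases, if $\varphi$ is $\Sigma_1$ then $\varphi^- \equiv \varphi$ and the conclusion is just Theorem \ref{S1compl}, and if $\varphi \equiv \Box\psi$ then $\varphi^- \equiv \psi$ and the conclusion is trivial. For the Boolean connectives, I would use the induction hypothesis together with the K axiom: for $\varphi \equiv \psi \land \sigma$, combine $\psi \to \Box(\psi^-)$ and $\sigma \to \Box(\sigma^-)$ with $\Box(\psi^-) \land \Box(\sigma^-) \to \Box(\psi^- \land \sigma^-)$, which is derivable in $\PA(\K)$; for $\varphi \equiv \psi \lor \sigma$, combine the induction hypotheses with $\Box(\psi^-) \to \Box(\psi^- \lor \sigma^-)$ and $\Box(\sigma^-) \to \Box(\psi^- \lor \sigma^-)$. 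For $\varphi \equiv \exists x\, \psi$, from $\PA(\K) \vdash \psi \to \Box(\psi^-)$ I get $\PA(\K) \vdash \psi \to \Box \exists x\, \psi^-$ using $\Box(\psi^-) \to \Box \exists x\, \psi^-$, and then existential generalization in the hypothesis gives $\exists x\, \psi \to \Box \exists x\, \psi^-$.

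The main obstacle is the case $\varphi \equiv \forall x < t\, \psi$, since, as in Theorem \ref{SBcompl}, we cannot just push a box through a bounded universal quantifier by simple propositional reasoning. The plan is to first handle the restricted case where the bound is a variable $y$ not occurring in $\psi$, by internal induction on $y$ applied to the formula $\xi(y) \equiv \varphi(y) \to \Box(\varphi^-(y))$, where $\varphi(y) \equiv \forall x < y\, \psi(x)$ and hence $\varphi^-(y) \equiv \forall x < y\, \psi^-(x)$. The base step $\xi(0)$ holds because $\forall x < 0\, \psi^-(x)$ is provable in $\PA(\K)$, hence so is $\Box \forall x < 0\, \psi^-(x)$ by \textsc{Nec}. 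For the induction step, using $\PA(\K) \vdash \varphi(S(y)) \leftrightarrow \varphi(y) \land \psi(y)$ and the (outer) induction hypothesis $\psi(y) \to \Box(\psi^-(y))$, one gets $\xi(y) \land \varphi(S(y)) \to \Box(\varphi^-(y)) \land \Box(\psi^-(y))$, and then the K axiom together with $\PA(\K) \vdash \varphi^-(y) \land \psi^-(y) \leftrightarrow \varphi^-(S(y))$ yields $\xi(y) \to \xi(S(y))$. Once this variable-bounded case is established, the general case follows by substituting the $\mathcal{L}_A$-term $t$ for $y$, which is legitimate in our framework because universal instantiation is an axiom scheme of $\PA_\Box$. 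The key point to emphasize is that only the K axiom and the induction scheme for $\LB$-formulas are needed throughout — the transitivity axiom of $\KF$ is not invoked anywhere.
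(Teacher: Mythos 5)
Your proposal is correct and follows essentially the same route as the paper, which proves the lemma by the same induction as in Theorem \ref{SBcompl} and singles out exactly the observation you emphasize: in the case $\varphi \equiv \Box\psi$ one has $\varphi^- \equiv \psi$, so the step is trivial and the transitivity axiom of $\KF$ is never needed. The remaining cases you spell out (Boolean connectives, $\exists$, and the internal induction for the bounded universal quantifier) match the paper's argument for Theorem \ref{SBcompl} verbatim.
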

\begin{proof}
This lemma is proved by induction on the construction of $\varphi$ as in the proof of Theorem \ref{SBcompl}. 
Notice that if $\varphi$ is of the form $\Box \psi$, then $\varphi^- \equiv \psi$ and thus $\PA(\K) \vdash \varphi \to \Box (\varphi^-)$ holds. 
\end{proof}

\begin{definition}
For each $\DB$ formula $\varphi(\vec{x})$, we define the $\DB$ formula $\varphi^*(\vec{x}, \vec{v})$ with zero or more additional free variables $\vec{v}$ which do not occur in $\varphi(\vec{x})$ inductively as follows: 
\begin{enumerate}
	\item If $\varphi(\vec{x})$ is either $\Delta_0$ or of the form $\Box \psi(\vec{x})$, then $\varphi^*(\vec{x}) : \equiv \varphi(\vec{x})$; 
	
	\item Otherwise if $\varphi(\vec{x})$ is of the form $\psi(\vec{x}) \land \sigma(\vec{x})$, then $\varphi^*(\vec{x}, \vec{u}, \vec{v}) : \equiv \psi^*(\vec{x}, \vec{u}) \land \sigma^*(\vec{x}, \vec{v})$ where $\vec{u}$ and $\vec{v}$ are pairwise disjoint; 
	
	\item Otherwise if $\varphi(\vec{x})$ is of the form $\psi(\vec{x}) \lor \sigma(\vec{x})$, then 
	\[
		\varphi^*(\vec{x}, \vec{u}, \vec{v}, w) : \equiv \Bigl[ \bigl(w = 0 \land \psi^*(\vec{x}, \vec{u}) \bigr) \lor \bigl(w \neq 0 \land \sigma^*(\vec{x}, \vec{v}) \bigr) \Bigr]
	\]
	where $\vec{u}$, $\vec{v}$, and $w$ are pairwise disjoint; 
		
	\item Otherwise if $\varphi(\vec{x})$ is of the form $\forall y < t\, \psi(\vec{x}, y)$, then $\varphi^*(\vec{x}, \vec{v}) : \equiv \forall y < t\, \psi^*(\vec{x}, y, \vec{v})$; 
	
	\item Otherwise if $\varphi(\vec{x})$ is of the form $\exists y < t\, \psi(\vec{x}, y)$, then $\varphi^*(\vec{x}, \vec{v}, w)$ is the formula $\exists y < t\, (y = w \land \psi^*(\vec{x}, y, \vec{v}))$. 
\end{enumerate}
\end{definition}

From the definition, we can easily prove the following lemma by induction on the construction of $\varphi(\vec{x}) \in \DB$. 

\begin{lem}\label{Ast1}
For any $\DB$ formula $\varphi(\vec{x})$, $\PA_\Box \vdash \varphi(\vec{x}) \leftrightarrow \exists \vec{v} \varphi^*(\vec{x}, \vec{v})$. 
\end{lem}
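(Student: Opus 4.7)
My plan is to proceed by induction on the construction of the $\DB$ formula $\varphi(\vec{x})$, handling each of the five clauses in the definition of $\varphi^*$ in turn.

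For clause 1 (the base case), $\varphi^*$ literally equals $\varphi$ with an empty tuple of fresh variables, so the equivalence $\varphi \leftrightarrow \exists \vec{v}\, \varphi^*$ is immediate. In the conjunction case (clause 2), because the fresh tuples $\vec{u}$ and $\vec{v}$ are chosen pairwise disjoint and disjoint from $\vec{x}$, the two existentials supplied by the induction hypothesis can be pulled out uniformly, so $(\exists \vec{u}\, \psi^*(\vec{x}, \vec{u})) \land (\exists \vec{v}\, \sigma^*(\vec{x}, \vec{v}))$ is provably equivalent to $\exists \vec{u}\, \exists \vec{v}\, (\psi^*(\vec{x}, \vec{u}) \land \sigma^*(\vec{x}, \vec{v}))$. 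In the disjunction case (clause 3), I would use the extra variable $w$ to encode which disjunct is witnessed: the forward direction splits on whether the induction hypothesis supplies a witness for $\psi$ (take $w = 0$, pair with any $\vec{v}$) or for $\sigma$ (take $w \neq 0$, pair with any $\vec{u}$), while the converse direction is a case analysis on $w = 0$ versus $w \neq 0$. In the bounded existential case (clause 5), I would rename the bound variable $y$ to a free variable $w$, reintroducing the bound via the equation $y = w$; this turns $\exists y < t\, \exists \vec{v}\, \psi^*(\vec{x}, y, \vec{v})$ into $\exists \vec{v}\, \exists w\, \exists y < t\, (y = w \land \psi^*(\vec{x}, y, \vec{v}))$, which is exactly $\exists \vec{v}\, \exists w\, \varphi^*(\vec{x}, \vec{v}, w)$.

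What I expect to be the main obstacle is the bounded universal case (clause 4): from the induction hypothesis $\forall y < t\, \exists \vec{v}\, \psi^*(\vec{x}, y, \vec{v})$ I need to derive $\exists \vec{v}\, \forall y < t\, \psi^*(\vec{x}, y, \vec{v})$. Since swapping $\exists \vec{v}$ past a bounded universal is not generally valid in first-order logic, I would invoke the collection principle for $\LB$-formulas, which is derivable in $\PA_\Box$ from the $\LB$-induction axioms and is the same tool already exploited in the proof of Proposition \ref{MSandBD}. Collection first provides a uniform numerical bound on the componentwise witnesses; combined with the specific recursive shape that $\psi^*$ inherits from the construction, this permits a single tuple $\vec{v}$ to be extracted that works uniformly across all $y < t$. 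The converse direction $\exists \vec{v}\, \varphi^* \to \varphi$ is in every case immediate: fix such a $\vec{v}$, observe $\psi^*(\vec{x}, y, \vec{v})$ for each $y < t$, and apply the induction hypothesis to recover $\psi(\vec{x}, y)$, hence $\varphi(\vec{x})$.
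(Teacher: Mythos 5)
The paper offers no proof of this lemma beyond the remark that it follows easily by induction on the construction of $\varphi$, so your write-up is the only detailed argument on the table; your base, conjunction, disjunction, and bounded-existential cases are fine and are surely what the authors intend. The genuine gap is exactly where you locate it, the bounded universal case, and the tool you reach for does not close it. Collection turns $\forall y < t\, \exists \vec{v}\, \psi^*(\vec{x}, y, \vec{v})$ into $\exists b\, \forall y < t\, \exists \vec{v} < b\, \psi^*(\vec{x}, y, \vec{v})$: it bounds the witnesses but leaves them dependent on $y$, whereas clause 4 of the definition of $*$ demands one tuple $\vec{v}$ serving every $y < t$ simultaneously. (Contrast Proposition \ref{MSandBD}, where collection suffices precisely because the inner existential $\exists v_0 < v$ is retained under the bounded universal.) No ``specific recursive shape'' of $\psi^*$ rescues the exchange: the extra variables of $\psi^*$ record discrete choices --- which disjunct holds, which element witnesses a bounded existential --- and these choices can genuinely vary with $y$.

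Concretely, take $\varphi :\equiv \forall y < \overline{2}\, \bigl(\Box (y=0) \lor y = \overline{1}\bigr)$, a $\DB$ sentence that is not $\Delta_0$. Then $\psi^*(y, w)$ is $\bigl(w = 0 \land \Box(y=0)\bigr) \lor \bigl(w \neq 0 \land y = \overline{1}\bigr)$ and $\varphi^*(w)$ is $\forall y < \overline{2}\, \psi^*(y, w)$. In $\PA(\Triv)$, a consistent extension of $\PA_\Box$, one has $\PA(\Triv) \vdash \varphi$, while $\varphi^*(w)$ forces $w = 0$ at $y = 0$ and $w \neq 0$ at $y = \overline{1}$, so $\PA(\Triv) \vdash \neg \exists w\, \varphi^*(w)$; hence $\PA_\Box \nvdash \varphi \to \exists w\, \varphi^*(w)$. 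So the step you flag as the main obstacle is not merely hard: it fails for the operation $*$ as defined, and no proof of the lemma as stated can exist. The statement becomes true --- and your collection argument then works essentially verbatim --- if clause 4 is repaired so that the fresh witnesses under $\forall y < t$ are read off a $y$-indexed coded sequence, e.g.\ $\varphi^*(\vec{x}, v) :\equiv \forall y < t\, \psi^*(\vec{x}, y, (v)_y)$, with Lemma \ref{Ast2} adjusted to match. You should flag this explicitly rather than paper over it with collection.
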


The following lemma is an important feature of our two transformations $-$ and $*$.

\begin{lem}\label{Ast2}
Let $T$ be any r.e.~extension of $\PA(\K)$ such that $T \nvdash \Box \bot$. 
For any $\DB$ sentence $\varphi$, if there exist numbers $\vec{p}$ such that $T \vdash \Box (\varphi^*)^-(\vec{\overline{p}})$, then $T \vdash \varphi$. 
\end{lem}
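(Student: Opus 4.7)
The plan is to prove the lemma by induction on the construction of the $\DB$ sentence $\varphi$, exploiting how the $*$-transformation records witness information and how $(\cdot)^-$ peels off precisely the outermost boxes of a $\SB$ formula. Throughout, the hypothesis $T \nvdash \Box\bot$ is used to discard ``incompatible'' witness choices that would otherwise force $T \vdash \Box\bot$ by necessitation together with the $\K$-axiom.

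For the base cases: if $\varphi$ is $\Delta_0$ then $(\varphi^*)^- \equiv \varphi$, so the assumption reads $T \vdash \Box\varphi$. By $\Delta_0$-decidability either $\PA \vdash \varphi$, giving $T \vdash \varphi$ directly, or $\PA \vdash \neg\varphi$; in the latter case necessitation yields $T \vdash \Box\neg\varphi$, and combining with $T \vdash \Box\varphi$ through $\K$ produces $T \vdash \Box\bot$, contradicting the hypothesis. If $\varphi$ is of the form $\Box\psi$ then $\varphi^* \equiv \Box\psi$ and $(\varphi^*)^- \equiv \psi$, so the assumption is literally $T \vdash \Box\psi \equiv \varphi$.

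For the inductive steps the arguments track the shape of the $*$-transformation. The case $\varphi \equiv \psi \land \sigma$ follows by splitting $\Box[(\psi^*)^-(\vec{\overline{p_1}}) \land (\sigma^*)^-(\vec{\overline{p_2}})]$ with $\K$ and invoking the induction hypothesis twice. The case $\varphi \equiv \psi \lor \sigma$ uses the numerical parameter $\overline{p}$ introduced by $*$: the trivial arithmetic fact $\PA \vdash \overline{p} = 0$ or $\PA \vdash \overline{p} \neq 0$ (depending on the value of $p$) collapses the disjunction inside the box so that $\K$ isolates either $\Box(\psi^*)^-(\vec{\overline{p_1}})$ or $\Box(\sigma^*)^-(\vec{\overline{p_2}})$, whereupon the induction hypothesis yields $T \vdash \psi$ or $T \vdash \sigma$. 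For the bounded cases, $t$ is a closed term since $\varphi$ is a sentence; writing $m$ for its value, the universal case unfolds $\forall y < \overline{m}$ into the finite conjunction $\bigwedge_{i<m}$ inside $\PA_\Box$, and the induction hypothesis is applied to each sentence $\psi(\overline{i})$. The existential case extracts the witness $\overline{p}$ from the clause $y = \overline{p}$: if $p \geq m$ then $\PA \vdash \neg(\overline{p} < \overline{m})$ refutes the boxed formula and forces $T \vdash \Box\bot$, which is excluded; hence $p < m$, and $\K$ combined with the induction hypothesis applied to $\psi(\overline{p})$ yields $T \vdash \psi(\overline{p})$, whence $T \vdash \varphi$.

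The main obstacle is the bookkeeping around the fresh auxiliary variables that $*$ introduces and their interaction with numeral substitution: one must verify that $(\psi(\overline{i}))^*(\vec{v})$ coincides with $\psi^*(\overline{i},\vec{v})$ (up to an innocuous renaming of witness variables), so that the induction hypothesis for the $\DB$ sentence $\psi(\overline{i})$ can be fed exactly the numerical witnesses already supplied by the hypothesis on $\varphi$. Once this bookkeeping is clear, each inductive case reduces to a single application of $\K$ together with the standing exclusion of $\Box\bot$.
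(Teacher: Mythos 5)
Your proposal is correct and follows essentially the same route as the paper's proof: induction on the construction of the $\DB$ sentence $\varphi$, using the numerical parameters supplied by the $*$-transformation to collapse disjunctions and bounded existentials inside the box, and invoking $T \nvdash \Box\bot$ exactly where the paper does (the $\Delta_0$ base case and the bounded existential case). The bookkeeping point you flag about $*$ commuting with numeral substitution is real but routine, and the paper handles it implicitly in the same way.
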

\begin{proof}
We prove the lemma by induction on the construction of $\varphi$. 
\begin{itemize}
	\item If $\varphi$ is a $\Delta_0$ sentence, then $(\varphi^*)^- \equiv \varphi^- \equiv \varphi$. 
	Suppose $T \vdash \Box (\varphi^*)^-$, i.e., $T \vdash \Box \varphi$. 
	If $\NS \models \neg \varphi$, then $T \vdash \neg \varphi$ and $T \vdash \Box \neg \varphi$. 
	We have $T \vdash \Box \bot$, a contradiction. 
	Hence, $\NS \models \varphi$. 
	We conclude $T \vdash \varphi$. 
	
	\item If $\varphi$ is of the form $\Box \psi$, then $(\varphi^*)^- \equiv (\Box \psi)^- \equiv \psi$. 
	Suppose $T \vdash \Box (\varphi^*)^-$. 
	Then, $T \vdash \varphi$. 
	
	\item If $\varphi$ is of the form $\psi \land \sigma$, then $(\varphi^*)^-(\vec{u}, \vec{v}) \equiv (\psi^*)^-(\vec{u}) \land (\sigma^*)^-(\vec{v})$. 
	Suppose $T \vdash \Box (\varphi^*)^-(\vec{\overline{p}}, \vec{\overline{q}})$. 
	Then, $T \vdash \Box (\psi^*)^-(\vec{\overline{p}})$ and $T \vdash \Box (\sigma^*)^-(\vec{\overline{q}})$. 
	By the induction hypothesis, $T \vdash \psi$ and $T \vdash \sigma$. 
	We conclude $T \vdash \varphi$. 
	
	\item If $\varphi$ is of the form $\psi \lor \sigma$, then 
	\[
		(\varphi^*)^-(\vec{u}, \vec{v}, w) \equiv \Bigl[ \bigl(w = 0 \land (\psi^*)^-(\vec{u}) \bigr) \lor \bigl(w \neq 0 \land (\sigma^*)^-(\vec{v}) \bigr) \Bigr].
	\] 
	Suppose $T \vdash \Box (\varphi^*)^-(\vec{\overline{p}}, \vec{\overline{q}}, \overline{r})$. 
	Then, 
	\[
		T \vdash \Box \Bigl[ \bigl(\overline{r} = 0 \land (\psi^*)^-(\vec{\overline{p}}) \bigr) \lor \bigl(\overline{r} \neq 0 \land (\sigma^*)^-(\vec{\overline{q}}) \bigr) \Bigr]. 
	\]
	If $r = 0$, then $T \vdash \Box (\psi^*)^-(\vec{\overline{p}})$. 
	By the induction hypothesis, $T \vdash \psi$. 
	If $r \neq 0$, then $T \vdash \Box (\sigma^*)^-(\vec{\overline{q}})$. 
	By the induction hypothesis, $T \vdash \sigma$. 
	In either case, $T \vdash \varphi$. 
	
	\item If $\varphi$ is of the form $\forall x < t\, \psi(x)$ for some $\mathcal{L}_A$-term $t$, then $(\varphi^*)^-(\vec{v})$ is the formula $\forall x < t\, (\psi^*)^-(\vec{v}, x)$. 
	Since $\varphi$ is a sentence, $t$ is a closed term. 
	Let $k$ be the value of the term $t$ and suppose $T \vdash \Box (\varphi^*)^-(\vec{\overline{p}})$. 
	Then, for all $n < k$, $T \vdash \Box (\psi^*)^-(\vec{\overline{p}}, \overline{n})$. 
	By the induction hypothesis, $T \vdash \psi(\overline{n})$. 
	We obtain $T \vdash \varphi$. 
	
	\item If $\varphi$ is of the form $\exists x < t\, \psi(x)$ for some closed term $t$, then 
	\[
		(\varphi^*)^-(\vec{v}, w) \equiv \exists x < t \bigl(x = w \land (\psi^*)^-(\vec{v}, x) \bigr).
	\] 
	Suppose $T \vdash \Box (\varphi^*)^-(\vec{\overline{p}}, \overline{q})$. 
	Then, $T \vdash \Box \exists x < t  \bigl(x = \overline{q} \land (\psi^*)^-(\vec{\overline{p}}, x) \bigr)$. 
	Since $T \nvdash \Box \bot$, the value of $t$ is larger than $q$. 
	Since $T \vdash \Box (\psi^*)^-(\vec{\overline{p}}, \overline{q})$, by the induction hypothesis, $T \vdash \psi(\overline{q})$. 
	Then, $T \vdash \exists x < t\, \psi(x)$, that is, $T \vdash \varphi$. 
\end{itemize}
\end{proof}

We are ready to prove our main theorem of this section.

\begin{thm}\label{SBDP}
Let $T$ be any r.e.~extension of $\PA(\K)$ such that $T \nvdash \Box \bot$. 
Then, the following are equivalent: 
\begin{enumerate}
	\item $T$ has $\DB$-$\DP$ and $(\SB, \Sigma_1)$-$\DP$. 	
	\item $T$ has $\B$-$\EP$. 
	\item $T$ has $\SB$-$\EP$.
	\item $T$ has $\SB$-$\DP$. 
\end{enumerate}
\end{thm}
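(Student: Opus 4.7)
The plan is to establish the cycle of implications $(3) \Rightarrow (4) \Rightarrow (1) \Rightarrow (2) \Rightarrow (3)$; together with the trivial $(3) \Rightarrow (2)$ from $\B \subseteq \SB$, this delivers the four-way equivalence. The implication $(3) \Rightarrow (4)$ is immediate from Proposition \ref{EPtoDP}, and $(4) \Rightarrow (1)$ is immediate from $\Sigma_1, \DB \subseteq \SB$, so that $\SB$-$\DP$ subsumes both $\DB$-$\DP$ and $(\SB, \Sigma_1)$-$\DP$.

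For $(2) \Rightarrow (3)$, I would chain the transformations already developed to reduce any $\SB$-existence question to a $\B$-existence question. Given $\varphi \in \SB$ with $T \vdash \exists x\, \varphi(x)$, Proposition \ref{MSandBD} yields $\alpha \in \DB$ with $\varphi(x) \leftrightarrow \exists \vec{v}\, \alpha(x, \vec{v})$; Lemma \ref{Ast1} yields $\alpha^* \in \DB$ with $\alpha(x, \vec{v}) \leftrightarrow \exists \vec{w}\, \alpha^*(x, \vec{v}, \vec{w})$; and Lemma \ref{Minus} yields $\alpha^*(x, \vec{v}, \vec{w}) \to \Box (\alpha^*)^-(x, \vec{v}, \vec{w})$ in $\PA(\K)$. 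Packing the three blocks of existentials through a pairing function gives $T \vdash \exists z\, \Box \theta(z)$ for $\theta(z) := (\alpha^*)^-((z)_1, (z)_2, (z)_3)$, an instance of $\B$-$\EP$. Hypothesis (2) then produces $n$ with $T \vdash \Box \theta(\overline{n})$; decoding $n = \langle n_1, \vec{n_2}, \vec{n_3}\rangle$ and applying Lemma \ref{Ast2} (available since $T \nvdash \Box \bot$) to the $\DB$ sentence $\alpha(\overline{n_1}, \vec{\overline{n_2}})$ gives $T \vdash \alpha(\overline{n_1}, \vec{\overline{n_2}})$, whence $T \vdash \varphi(\overline{n_1})$.

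The main obstacle is $(1) \Rightarrow (2)$. Since $(\SB, \Sigma_1)$-$\DP$ gives $(\B, \Sigma_1)$-$\DP$, Proposition \ref{DPtoS1sound} together with $T \nvdash \Box \bot$ yields that $T$ is $\Sigma_1$-sound. Given $T \vdash \exists x\, \Box \psi(x)$, Lemma \ref{Minus} applied to $\exists x > \overline{k}\, \Box \psi(x) \to \Box \exists x > \overline{k}\, \psi(x)$ converts the trivial decomposition $T \vdash \bigvee_{i \leq k} \Box \psi(\overline{i}) \lor \exists x > \overline{k}\, \Box \psi(x)$ into the $\B^{k+2}$-disjunction $T \vdash \bigvee_{i \leq k} \Box \psi(\overline{i}) \lor \Box \exists x > \overline{k}\, \psi(x)$, on which $\DB$-$\DP$ (equivalently $\B^n$-$\DP$ for all $n$ by Proposition \ref{BDP}) operates. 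The delicate step is to combine this with a Fixed Point construction in the style of Lemma \ref{DPtoSDP}, using Rosser-type $\Sigma_1$ sentences keyed to the existential witness $x$, so that $(\SB, \Sigma_1)$-$\DP$ and $\Sigma_1$-soundness together force a concrete $n$ with $T \vdash \Box \psi(\overline{n})$ instead of merely the infinite family $\{T \vdash \Box \exists x > \overline{k}\, \psi(x)\}_k$ that would otherwise result. Coordinating the fixed-point equations with the two assumed disjunction properties so that a $T$-visible contradiction (or a concrete witness) emerges is the crux of this implication.
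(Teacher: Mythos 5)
Your cycle $(3)\Rightarrow(4)\Rightarrow(1)$ and your proof of $(2)\Rightarrow(3)$ match the paper essentially step for step: the paper also chains Proposition \ref{MSandBD}, Lemma \ref{Ast1}, a pairing function, Lemma \ref{Minus}, $\B$-$\EP$, and Lemma \ref{Ast2} in exactly the order you describe. The problem is $(1)\Rightarrow(2)$, which you correctly identify as the crux but do not actually prove: you describe the shape of a Rosser-style argument and then state that ``coordinating the fixed-point equations with the two assumed disjunction properties'' remains to be done. That coordination is the content of the implication, so as it stands there is a genuine gap. Moreover, the decomposition you propose, $T \vdash \bigvee_{i \leq k}\Box\psi(\overline{i}) \lor \Box\exists x > \overline{k}\,\psi(x)$, is a dead end on its own: applying $\B^{k+2}$-$\DP$ to it can always return the last disjunct, and knowing $T \vdash \Box\exists x > \overline{k}\,\psi(x)$ for every $k$ yields no witness.

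The missing idea in the paper is simpler than the multi-sentence Rosser system of Lemma \ref{DPtoSDP} that you point to. Given $T \vdash \exists x\,\Box\varphi(x)$, one takes a \emph{single} $\SB$ fixed point
\[
\PA_\Box \vdash \psi \leftrightarrow \exists x \bigl(\Box\varphi(x) \land \forall y < x\,\neg\Prf_T(\gn{\psi}, y)\bigr),
\]
from which $T \vdash \psi \lor \PR_T(\gn{\psi})$. The hypothesis $(\SB,\Sigma_1)$-$\DP$ is used not directly but through Corollary \ref{SBDPDC}, which converts it into $\SB$-$\DC$; applied to the displayed disjunction this gives $T \vdash \psi$ outright. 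The decisive point is that an \emph{actual} $T$-proof $p$ of $\psi$ now bounds the existential quantifier: $T \vdash \Prf_T(\gn{\psi},\overline{p})$ forces $T \vdash \exists x \leq \overline{p}\,\Box\varphi(x)$, hence $T \vdash \Box\varphi(\overline{0}) \lor \cdots \lor \Box\varphi(\overline{p})$, and $\B^{p+1}$-$\DP$ (available from $\DB$-$\DP$ by Proposition \ref{BDP}) extracts a concrete $k \leq p$ with $T \vdash \Box\varphi(\overline{k})$. So the fixed point is not ``keyed to the existential witness'' as you suggest; it is designed so that $T$ proves it, and the G\"odel number of that proof is the numerical bound. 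With this step supplied, the rest of your argument goes through.
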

\begin{proof}
$(1 \Rightarrow 2)$: 
Let $\varphi(x)$ be any $\LB$-formula with no free variables except possibly $x$, such that $T \vdash \exists x \Box \varphi(x)$. 
By the Fixed Point Lemma, let $\psi$ be a $\SB$ sentence satisfying
\[
	\PA_\Box \vdash \psi \leftrightarrow \exists x \bigl(\Box \varphi(x) \land \forall y < x \, \neg \Prf_T(\gn{\psi}, y) \bigr). 
\]
Since $\PA_\Box \vdash \exists x \Box \varphi(x) \land \neg \PR_T(\gn{\psi}) \to \psi$, we have $T \vdash \PR_T(\gn{\psi}) \lor \psi$. 
Since $T$ is $\SB$-$\DC$ by Corollary \ref{SBDPDC}, we obtain $T \vdash \psi$. 
By the choice of $\psi$, 
\begin{equation}\label{eq1}
	T \vdash \exists x \bigl(\Box \varphi(x) \land \forall y < x \, \neg \Prf_T(\gn{\psi}, y) \bigr).
\end{equation} 
Let $p$ be a proof of $\psi$ in $T$, then $T \vdash \Prf_T(\gn{\psi}, \overline{p})$ and thus $T \vdash \exists x \leq \overline{p} \, \Box \varphi(x)$ by (\ref{eq1}). 
Then, $T \vdash \Box \varphi(\overline{0}) \lor \cdots \lor \Box \varphi(\overline{p})$. 
Since $T$ has $\B^{p+1}$-$\DP$ by Proposition \ref{BDP}.2, there exists $k \leq p$ such that $T \vdash \Box \varphi(\overline{k})$. 
Therefore, $T$ has $\B$-$\EP$. 

$(2 \Rightarrow 3)$: 
Let $\varphi(x)$ be any $\SB$ formula without having free variables except $x$ such that $T \vdash \exists x \varphi(x)$. 
By Proposition \ref{MSandBD}, there exists a $\DB$ formula $\psi(x, y)$ such that $\PA_\Box \vdash \varphi(x) \leftrightarrow \exists y \psi(x, y)$. 
Also, by Lemma \ref{Ast1}, $\PA_\Box \vdash \psi(x, y) \leftrightarrow \exists \vec{v} \psi^*(x, y, \vec{v})$. 
Then, $T \vdash \exists x \exists y \exists \vec{v} \psi^*(x, y, \vec{v})$ and 
\[
	T \vdash \exists w \, \exists x \leq w \, \exists y \leq w \, \exists \vec{v} \leq w\,  \bigl( w = \langle x, y, \vec{v} \rangle \land \psi^*(x, y, \vec{v}) \bigr).
\]
Here $\langle x, y, \vec{v} \rangle$ is an appropriate iteration of usual $\Delta_0$ representable bijective pairing function $\langle \cdot, \cdot \rangle$. 
We may assume that $\PA$ proves $x\, {\leq}\, \langle x, y \rangle$ and $y \, {\leq} \, \langle x, y \rangle$. 
By Lemma \ref{Minus}, 
\[
	T \vdash \exists w \, \Box \exists x \leq w \, \exists y \leq w \, \exists \vec{v} \leq w \, \bigl(w = \langle x, y, \vec{v} \rangle \land (\psi^*)^-(x, y, \vec{v}) \bigr).
\]
By $\B$-$\EP$, there exists a natural number $k$ such that
\[
	T \vdash \Box \exists x \leq \overline{k} \, \exists y \leq \overline{k} \, \exists \vec{v} \leq \overline{k} \, \bigl(\overline{k} = \langle x, y, \vec{v} \rangle \land (\psi^*)^-(x, y, \vec{v}) \bigr).
\]
For the unique $p$, $q$, and $\vec{r}$ such that $k = \langle p, q, \vec{r} \rangle$, 
\[
	T \vdash \Box (\psi^*)^-(\overline{p}, \overline{q}, \vec{\overline{r}}).
\]
By Lemma \ref{Ast2}, we obtain $T \vdash \psi(\overline{p}, \overline{q})$. 
Then, $T \vdash \varphi(\overline{p})$. 
Therefore, $T$ has $\SB$-$\EP$. 

$(3 \Rightarrow 4)$: 
By Proposition \ref{EPtoDP}.2. 

$(4 \Rightarrow 1)$:
This is trivial. 
\end{proof}

In order to derive the equivalence of $\MDP$ and $\MEP$ from Theorem \ref{SBDP}, we prove a proposition that connects $\MDP$ (resp.~$\MEP$) and $\SB$-$\DP$ (resp.~$\SB$-$\EP$). 

\begin{prop}\label{MDPtoSBDP}
Let $T$ be any r.e.~extension of $\PA(\KF)$. 
\begin{enumerate}
	\item If $T$ has $\MDP$, then $T$ also has $\SB$-$\DP$;  
	\item If $T$ has $\MEP$, then $T$ also has $\SB$-$\EP$. 
\end{enumerate}
\end{prop}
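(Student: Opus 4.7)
The plan is to use the formalized $\SB$-completeness theorem (Theorem \ref{SBcompl}) as the key tool, which gives $\PA(\KF) \vdash \varphi \to \Box\varphi$ for every $\varphi \in \SB$. This lets us ``insert a box'' in front of any $\SB$ formula provably in $T$, which converts $\SB$-disjunctions and $\SB$-existence statements into the modal forms to which $\MDP$ and $\MEP$ directly apply.

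For Clause 1, suppose $T$ has $\MDP$ and let $\varphi, \psi$ be $\SB$ sentences with $T \vdash \varphi \lor \psi$. By Theorem \ref{SBcompl}, $T \vdash \varphi \to \Box\varphi$ and $T \vdash \psi \to \Box\psi$, hence $T \vdash \Box\varphi \lor \Box\psi$. Applying $\MDP$, we obtain $T \vdash \varphi$ or $T \vdash \psi$, which is exactly $\SB$-$\DP$.

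For Clause 2, suppose $T$ has $\MEP$ and let $\varphi(x)$ be an $\SB$ formula whose only free variable is $x$, with $T \vdash \exists x\, \varphi(x)$. I would first check that Theorem \ref{SBcompl} is in fact proved for $\SB$ formulas with free variables (a look at its proof, for instance the clause handling $\forall x < t$, confirms this). Thus $\PA(\KF) \vdash \varphi(x) \to \Box\varphi(x)$, and by generalization and predicate logic $T \vdash \exists x\, \varphi(x) \to \exists x\, \Box\varphi(x)$. Consequently $T \vdash \exists x\, \Box\varphi(x)$, and $\MEP$ yields a natural number $n$ with $T \vdash \varphi(\overline{n})$, giving $\SB$-$\EP$.

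There is no real obstacle here beyond making sure Theorem \ref{SBcompl} applies to formulas with a free variable (so that the boxing step survives quantification), which is immediate from its inductive proof. The argument is essentially a one-line reduction in each case via formalized $\SB$-completeness.
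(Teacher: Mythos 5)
Your proof is correct and follows the paper's own argument exactly: Clause 1 is the same boxing-via-Theorem \ref{SBcompl} reduction to $\MDP$, and your Clause 2 is precisely the "proved similarly" argument the paper leaves implicit (noting, as you do, that Theorem \ref{SBcompl} holds for formulas with free variables). Nothing to add.
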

\begin{proof}
1. Let $\varphi$ and $\psi$ be any $\SB$ sentences such that $T \vdash \varphi \lor \psi$. 
By Theorem \ref{SBcompl}, $T \vdash \Box \varphi \lor \Box \psi$. 
By $\MDP$, we obtain $T \vdash \varphi$ or $T \vdash \psi$. 
Therefore, $T$ has $\SB$-$\DP$. 

Clause 2 is proved similarly. 
\end{proof}


\begin{cor}\label{FSrev}
For any r.e.~extension $T$ of $\PA(\KF)$, $T$ has $\MDP$ if and only if $T$ has $\MEP$. 
\end{cor}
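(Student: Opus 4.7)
The plan is to derive Corollary \ref{FSrev} as an almost immediate consequence of Theorem \ref{SBDP}, threaded through Propositions \ref{DPEPb}, \ref{EPtoDP}.1, and \ref{MDPtoSBDP}. If $T$ is inconsistent there is nothing to prove, so I assume $T$ is consistent. The direction $\MEP \Rightarrow \MDP$ is already Proposition \ref{EPtoDP}.1, applied to $T$ viewed as an extension of $\PA(\K)$; no new argument is needed there.

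For $\MDP \Rightarrow \MEP$, the chain assembles as follows. First, by Proposition \ref{DPEPb}.1, the assumption of $\MDP$ forces $T$ to be closed under the box elimination rule, so consistency of $T$ yields $T \nvdash \Box \bot$ (otherwise $T \vdash \bot$). This is precisely the hypothesis needed to invoke Theorem \ref{SBDP}. Next, Proposition \ref{MDPtoSBDP}.1 converts $\MDP$ into $\SB$-$\DP$; this is the step that genuinely requires $T$ to extend $\PA(\KF)$ rather than merely $\PA(\K)$, since its proof goes through the formalized $\SB$-completeness theorem (Theorem \ref{SBcompl}). Theorem \ref{SBDP} (clause $4 \Rightarrow$ clause $2$) then upgrades $\SB$-$\DP$ to $\B$-$\EP$. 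Finally, combining $\B$-$\EP$ with the closure under the box elimination rule obtained above and applying Proposition \ref{DPEPb}.2 delivers $\MEP$.

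Since the real content lies in Theorem \ref{SBDP}, the corollary itself is essentially a packaging step and presents no substantial difficulty. The only point that warrants attention is tracing precisely where the $\PA(\KF)$ hypothesis (rather than merely $\PA(\K)$) is used, and I expect that this reduces to the single appeal to formalized $\SB$-completeness inside Proposition \ref{MDPtoSBDP}; every other ingredient in the chain is available already over $\PA(\K)$.
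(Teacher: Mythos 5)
Your proof is correct and follows essentially the same route as the paper: handle the trivial inconsistent case, get $\MEP \Rightarrow \MDP$ from Proposition \ref{EPtoDP}.1, and for the converse chain together Proposition \ref{MDPtoSBDP}, Proposition \ref{DPEPb} (to get closure under box elimination and hence $T \nvdash \Box \bot$), Theorem \ref{SBDP}, and Proposition \ref{DPEPb} again. Your remark about where the $\PA(\KF)$ hypothesis enters is also accurate and is exactly the point raised as an open problem in the final section.
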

\begin{proof}
Since $\MEP$ implies $\MDP$ by Proposition \ref{EPtoDP}.1, it suffices to show that $\MDP$ implies $\MEP$. 
We may assume that $T$ is consistent. 
If $T$ has $\MDP$, then $T$ has $\SB$-$\DP$ by Proposition \ref{MDPtoSBDP}. 
Also, $T$ is closed under the box elimination rule by Proposition \ref{DPEPb}. 
Then, $T \nvdash \Box \bot$ by the consistency of $T$. 
By Theorem \ref{SBDP}, $T$ has $\B$-$\EP$. 
By Proposition \ref{DPEPb} again, we conclude that $T$ has $\MEP$. 
\end{proof}

\begin{remark}\label{RemFS}
In the introduction, we imprecisely mentioned the result of Friedman and Sheard (\cite{FS89}). 
Firstly, Friedman and Sheard actually proved their theorem in the setting where the use of the rule \textsc{Nec} and the axiom $\forall \vec{x}(\Box(\varphi \to \psi) \to (\Box \varphi \to \Box \psi))$ is restricted, that is, in the non-normal setting. 
In our normal setting, the following result follows from their theorem: 
For any r.e.~extension $T$ of $\PA(\KF) + \{\forall \vec{x} \Box (\Box \varphi \to \varphi) \mid \varphi \in \Delta_0\}$, if $T$ is closed under the box elimination rule, then $T$ has $\B$-$\DP$ if and only if $T$ has $\B$-$\EP$. 
Then, in the light of Proposition \ref{DPEPb}, this statement can be rewritten as follows: 
For any r.e.~extension $T$ of $\PA(\KF) + \{\forall \vec{x} \Box (\Box \varphi \to \varphi) \mid \varphi \in \Delta_0\}$, $T$ has $\MDP$ if and only if $T$ has $\MEP$. 
Therefore, our Corollary \ref{FSrev} shows that the same consequence is obtained without using the axiom scheme $\{\forall \vec{x} \Box (\Box \varphi \to \varphi) \mid \varphi \in \Delta_0\}$. 
Notice that by Theorem \ref{S1compl}, over $\PA(\K)$, $\{\forall \vec{x} \Box (\Box \varphi \to \varphi) \mid \varphi \in \Delta_0\}$ is equivalent to a single sentence $\Box \neg \Box \bot$. 
\end{remark}

\section{Generalizations of the notions of soundness and $\Sigma_1$-soundness}\label{Sec:Soundness}

In this section, we introduce several notions related to the soundness of theories of modal arithmetic with respect to $\LB$-sentences. 
This section consists of three subsections. 
In the first subsection, we introduce the notion of the $\LB$-soundness, and prove several $\LB$-theories are actually $\LB$-sound. 
In the second subsection, we introduce the notions of the $\SB$-soundness and the weak $\SB$-soundness. 
Then, we prove that over appropriate theories, the $\SB$-soundness and the weak $\SB$-soundness characterize $\MDP$ and $\SB$-$\DP$, respectively. 
In the last subsection, we prove two non-implications between the properties as applications of the results we have obtained so far.

\subsection{$\LB$-soundness}

We formulate the notion of the $\LB$-soundness under the interpretation that boxed formulas represent the provability of some formula in the standard model $\NS$ of arithmetic.
To do so, we once translate each $\LB$-sentence into an $\mathcal{L}_A$-sentence using a provability predicate, and then consider the truth of the translated sentence in $\NS$. 
First, we introduce two types of translations $\pi_T$ and $\pi'_T$. 

\begin{definition}[$\pi$-translations]
Let $T$ be any r.e.~$\LB$-theory. 
We define a translation $\pi_T$ of $\LB$-formulas into $\mathcal{L}_A$-formulas inductively as follows: 
\begin{enumerate}
	\item If $\varphi$ is an $\mathcal{L}_A$-formula, then $\pi_T(\varphi) : \equiv \varphi$; 
	\item $\pi_T$ preserves logical connectives and quantifiers; 
	\item $\pi_T(\Box \varphi(\vec{x})) : \equiv \PR_T(\ulcorner \varphi(\vec{\dot{x}}) \urcorner)$. 
\end{enumerate}
\end{definition}

Here $\gn{\varphi(\vec{\dot{x}})}$ is a primitive recursive term corresponding to a primitive recursive function calculating the G\"odel number of $\varphi(\vec{\overline{n}})$ from $\vec{n}$. 
Note that $\vec{x}$ are free variables in the formula $\PR_T(\gn{\varphi(\vec{\dot{x}})})$. 

\begin{definition}[$\pi'$-translations]
Let $T$ be any r.e.~$\LB$-theory. 
We define a translation $\pi'_T$ of $\LB$-formulas into $\mathcal{L}_A$-formulas inductively as follows: 
\begin{enumerate}
	\item If $\varphi$ is an $\mathcal{L}_A$-formula, then $\pi'_T(\varphi) : \equiv \varphi$; 
	\item $\pi'_T$ preserves logical connectives and quantifiers; 
	\item $\pi'_T(\Box \varphi(\vec{x})) : \equiv \PR_T(\ulcorner \varphi(\vec{\dot{x}}) \urcorner) \land \pi'_T(\varphi(\vec{x}))$. 
\end{enumerate}
\end{definition}

The translation $\pi'_T$ is a formalization of Shapiro's slash interpretation (\cite{Sha85}), introduced in Halbach and Horsten (\cite{HH00}) under the name $\sigma_T$.

\begin{definition}
Let $T$ be any r.e.~$\LB$-theory.  
\begin{itemize}
	\item $T$ is said to be \textit{$\LB$-sound} if for any $\LB$-sentence $\varphi$, if $T \vdash \varphi$, then $\NS \models \pi_T(\varphi)$; 
	\item $T$ is said to be \textit{alternatively $\LB$-sound} if for any $\LB$-sentence $\varphi$, if $T \vdash \varphi$, then $\NS \models \pi'_T(\varphi)$. 
\end{itemize}
\end{definition}

Actually, these two notions are equivalent. 

\begin{prop}\label{soundness}
For any r.e.~$\LB$-theory $T$, the following are equivalent: 
\begin{enumerate}
	\item $T$ is $\LB$-sound. 
	\item $T$ is alternatively $\LB$-sound. 
\end{enumerate}
\end{prop}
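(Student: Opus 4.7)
The plan is to prove the following key claim, from which the proposition follows immediately: under either soundness hypothesis on $T$, for every $\LB$-sentence $\varphi$, $\NS \models \pi_T(\varphi) \leftrightarrow \pi'_T(\varphi)$. Granting the claim, $\LB$-soundness yields $T \vdash \varphi \Rightarrow \NS \models \pi_T(\varphi) \Rightarrow \NS \models \pi'_T(\varphi)$, hence alternative $\LB$-soundness; the reverse direction is symmetric.

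To prove the claim, I would fix $T$ together with one of the two soundness hypotheses, and then proceed by induction on the construction of an $\LB$-formula $\varphi(\vec{x})$, establishing for every tuple $\vec{n}$ of natural numbers that $\NS \models \pi_T(\varphi(\vec{\overline{n}})) \leftrightarrow \pi'_T(\varphi(\vec{\overline{n}}))$. Since both translations fix $\mathcal{L}_A$-formulas, commute with logical connectives and quantifiers, and commute with substitution of numerals for free variables, the atomic, propositional, and quantifier cases follow directly from the induction hypothesis.

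The only substantive case is $\varphi(\vec{x}) \equiv \Box \psi(\vec{x})$. For any $\vec{n}$, what must be verified is
\[
\NS \models \PR_T(\gn{\psi(\vec{\overline{n}})}) \leftrightarrow \bigl(\PR_T(\gn{\psi(\vec{\overline{n}})}) \land \pi'_T(\psi(\vec{\overline{n}})) \bigr).
\]
The $(\Leftarrow)$ direction is trivial. For $(\Rightarrow)$, observe that $\NS \models \PR_T(\gn{\psi(\vec{\overline{n}})})$ is equivalent to $T \vdash \psi(\vec{\overline{n}})$, since $\PR_T$ is a fixed $\Sigma_1$ provability predicate of the r.e.\ theory $T$. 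Under the $\LB$-soundness assumption, $T \vdash \psi(\vec{\overline{n}})$ gives $\NS \models \pi_T(\psi(\vec{\overline{n}}))$, and the induction hypothesis (applied to the strictly simpler formula $\psi$) converts this to $\NS \models \pi'_T(\psi(\vec{\overline{n}}))$. Under the alternative $\LB$-soundness assumption, $T \vdash \psi(\vec{\overline{n}})$ gives $\NS \models \pi'_T(\psi(\vec{\overline{n}}))$ directly, without even invoking the induction hypothesis.

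I anticipate no serious obstacle. The heart of the argument is the recognition that the extra conjunct $\pi'_T(\psi)$ appearing in $\pi'_T(\Box \psi)$ is automatically satisfied whenever $T$ proves $\psi$, and this is precisely what each form of soundness guarantees; consequently the two translations cannot disagree on any $\LB$-sentence, and the two soundness notions coincide.
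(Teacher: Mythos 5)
Your proposal is correct and follows essentially the same route as the paper: both prove by induction on the structure of $\varphi$ that, under either soundness hypothesis, $\NS \models \pi_T(\varphi) \leftrightarrow \pi'_T(\varphi)$ for every $\LB$-sentence, with the only substantive step being the $\Box\psi$ case, where $\NS \models \PR_T(\gn{\psi})$ is unpacked to $T \vdash \psi$ and the relevant soundness assumption supplies the extra conjunct $\pi'_T(\psi)$ (via the induction hypothesis in the $\LB$-sound case, directly in the alternative case). No gaps.
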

\begin{proof}
$(1 \Rightarrow 2)$: 
Suppose that $T$ is $\LB$-sound. 
We prove by induction on the construction of $\varphi$ that for all $\LB$-sentences $\varphi$, $\NS \models \pi_T(\varphi) \leftrightarrow \pi_T'(\varphi)$. 
If $\varphi$ is an atomic $\mathcal{L}_A$-sentence, then $\pi_T(\varphi)$ coincides with $\pi'_T(\varphi)$. 
The cases for Boolean connectives are easy. 

If $\varphi$ is of the form $\forall x \psi(x)$, then for any natural number $n$, $\NS \models \pi_T(\psi(\overline{n})) \leftrightarrow \pi'_T(\psi(\overline{n}))$ by the induction hypothesis. 
Then, $\NS \models \forall x \bigl(\pi_T(\psi(x)) \leftrightarrow \pi'_T(\psi(x) \bigr)$ and hence $\NS \models \forall x \pi_T(\psi(x)) \leftrightarrow \forall x \pi'_T(\psi(x))$. 
This means $\NS \models \pi_T(\varphi) \leftrightarrow \pi'_T(\varphi)$. 

Suppose that $\varphi$ is of the form $\Box \psi$. 
Since $\pi_T(\Box \psi)$ is $\PR_T(\gn{\psi})$, by the $\LB$-soundness of $T$, 
$\NS \models \pi_T(\Box \psi)$ if and only if $\NS \models \PR_T(\gn{\psi}) \land \pi_T(\psi)$. 
By the induction hypothesis, $\NS \models \PR_T(\gn{\psi}) \land \pi_T(\psi)$ if and only if $\NS \models \PR_T(\gn{\psi}) \land \pi'_T(\psi)$. 
Thus, $\NS \models \pi_T(\Box \psi) \leftrightarrow \pi'_T(\Box \psi)$. 

$(2 \Rightarrow 1)$: 
Suppose that $T$ is alternatively $\LB$-sound. 
Similarly, we only prove that for all $\LB$-sentences $\psi$, $\NS \models \pi'_T(\Box \psi) \leftrightarrow \pi_T(\Box \psi)$. 
$\NS \models \pi'_T(\Box \psi)$ is equivalent to $\NS \models \PR_T(\gn{\psi}) \land \pi'_T(\psi)$. 
Then, by the alternative $\LB$-soundness of $T$, this is equivalent to $\NS \models \PR_T(\gn{\psi})$. 
This is exactly $\NS \models \pi_T(\Box \psi)$. 
\end{proof}

Here we show some propositions that help to prove the $\LB$-soundness of each $\LB$-theory. 

\begin{prop}\label{pi}
Let $T$ be an $\LB$-theory obtained by adding some axioms of $\PA(\GL)$ into $\PA_\Box$. 
For any $\LB$-formula $\varphi$, if $T \vdash \varphi$, then for any r.e.~extension $U$ of $T$, $\PA \vdash \pi_U(\varphi)$. 
\end{prop}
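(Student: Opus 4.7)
The natural approach is induction on the length of a $T$-proof of $\varphi$, verifying at each step that $\PA \vdash \pi_U(\chi)$ for every sentence $\chi$ introduced. The reason this works despite $T$ being only a subtheory of $U$ is that the translation needs the provability predicate of $U$ (not of $T$), which gives access to the Hilbert--Bernays--L\"ob derivability conditions for $\PR_U$ inside $\PA$; moreover, since $U$ extends $T$, any $T$-proof is automatically a $U$-proof, which is what will be needed for the modal rule.

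For the axiom cases I split $T$'s axioms by source. The arithmetical axioms of $\PA$, the logical axioms (including identity axioms, which in $\PA_\Box$ are only for atomic formulas), and each instance of the universal instantiation scheme are $\pi_U$-invariant in the sense that they contain no $\Box$, so their translations are provable in $\PA$ outright. An induction axiom for an $\LB$-formula $\psi(x)$ translates under $\pi_U$ to the induction axiom for the $\mathcal{L}_A$-formula $\pi_U(\psi)(x)$, which is an axiom of $\PA$. The remaining axioms of $T$ come from the schemes $\mathbf{K}$, $\mathbf{4}$, and $\mathbf{GL}$ of $\PA(\GL)$; their $\pi_U$-translations are, respectively,
\begin{align*}
&\forall \vec{x}\bigl(\PR_U(\gn{(\varphi \to \psi)(\vec{\dot{x}})}) \to (\PR_U(\gn{\varphi(\vec{\dot{x}})}) \to \PR_U(\gn{\psi(\vec{\dot{x}})}))\bigr),\\
&\forall \vec{x}\bigl(\PR_U(\gn{\varphi(\vec{\dot{x}})}) \to \PR_U(\gn{\Box \varphi(\vec{\dot{x}})})\bigr),\\
&\forall \vec{x}\bigl(\PR_U(\gn{(\Box \varphi \to \varphi)(\vec{\dot{x}})}) \to \PR_U(\gn{\varphi(\vec{\dot{x}})})\bigr),
\end{align*}
which are exactly the formalized modus ponens, formalized necessitation, and formalized L\"ob's theorem for $\PR_U$, all standardly provable in $\PA$ for a natural $\Sigma_1$ provability predicate.

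The rules MP and \textsc{Gen} pose no difficulty since $\pi_U$ commutes with $\to$ and $\forall$, so the induction hypothesis transfers cleanly. The interesting case, and the main obstacle, is \textsc{Nec}: from $T \vdash \psi$ we infer $\Box \psi$ and must show $\PA \vdash \pi_U(\Box \psi)$, i.e., $\PA \vdash \forall \vec{x}\, \PR_U(\gn{\psi(\vec{\dot{x}})})$ where $\vec{x}$ lists the free variables of $\psi$. The induction hypothesis gives only $\PA \vdash \pi_U(\psi)$, which is not directly the statement required. The remedy is to exploit the hypothesis that $U$ extends $T$: applying \textsc{Gen} in $T$ we have $T \vdash \forall \vec{x}\, \psi$, hence $U \vdash \forall \vec{x}\, \psi$; since $U$ is r.e., there is a concrete natural number $p$ coding a $U$-proof of $\forall \vec{x}\, \psi$, so $\PA \vdash \Prf_U(\gn{\forall \vec{x}\, \psi}, \overline{p})$, and thus $\PA \vdash \PR_U(\gn{\forall \vec{x}\, \psi})$. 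A formalized instantiation argument inside $\PR_U$ then yields $\PA \vdash \forall \vec{x}\, \PR_U(\gn{\psi(\vec{\dot{x}})})$, completing the induction. This step is where the r.e.~presentation of $U$ is essential; elsewhere, $U$ could in principle be replaced by $T$.
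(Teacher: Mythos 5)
Your overall strategy---induction on the length of the $T$-proof, with a case split over the axiom schemes and the three rules---is exactly the paper's, and your treatment of the distribution axioms, the axioms $\forall \vec{x}(\Box\psi \to \Box\Box\psi)$ (via formalized closure under \textsc{Nec}), MP, \textsc{Gen}, and \textsc{Nec} itself matches the paper's (the paper's \textsc{Nec} case is just the compressed form of your argument). However, your justification of the $\GL$-axiom case has a genuine error. The $\pi_U$-translation of $\forall \vec{x}\bigl(\Box(\Box\psi \to \psi) \to \Box\psi\bigr)$ is $\forall \vec{x}\bigl(\PR_U(\gn{\Box\psi(\vec{\dot{x}}) \to \psi(\vec{\dot{x}})}) \to \PR_U(\gn{\psi(\vec{\dot{x}})})\bigr)$, and the formula under the outer provability predicate is the \emph{modal} formula $\Box\psi \to \psi$, not the arithmetized reflection formula $\PR_U(\gn{\psi}) \to \psi$. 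So this is not an instance of the formalized L\"ob theorem, and it is not ``standardly provable in $\PA$ for a natural $\Sigma_1$ provability predicate'': for a theory $U$ that does not prove the L\"ob axioms (say $U = \PA_\Box$ and $\psi \equiv \bot$), there is no reason for $\PA$ to prove $\PR_U(\gn{\neg\Box\bot}) \to \PR_U(\gn{\bot})$. The correct derivation, which is the one the paper gives, must use the hypothesis that this very axiom belongs to $T$ and hence to $U$: from $\PR_U(\gn{\Box\psi \to \psi})$ one obtains $\PR_U(\gn{\Box(\Box\psi \to \psi)})$ by formalized necessitation, then $\PR_U(\gn{\Box\psi})$ by formalized modus ponens against $\PR_U(\gn{\Box(\Box\psi \to \psi) \to \Box\psi})$ (available precisely because $U$ proves the $\GL$ axiom), and finally $\PR_U(\gn{\psi})$ by formalized modus ponens against $\PR_U(\gn{\Box\psi \to \psi})$ again. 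Consequently your closing remark that ``elsewhere, $U$ could in principle be replaced by $T$'' is also off: the extension hypothesis is used in the $\GL$ case, not only for \textsc{Nec}.

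A smaller inaccuracy: instances of universal instantiation $\forall x\,\varphi(x) \to \varphi(t)$ range over all $\LB$-formulas $\varphi$, so they may well contain $\Box$. What saves this case is not the absence of $\Box$ but the fact that $\pi_U$ commutes with the logical structure and, $\PA$-provably, with substitution of terms into the dotted G\"odel-number terms, so that the translation is again ($\PA$-provably equivalent to) a logical axiom of first-order arithmetic.
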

\begin{proof}
Let $U$ be any r.e.~extension of $T$. 
As in the proof of Proposition \ref{alpha}, by induction on the length of proofs of $\varphi$ in $T$, we prove that for any $\LB$-formula $\varphi$, if $T \vdash \varphi$, then $\PA \vdash \pi_U(\varphi)$. 
We only give proofs of the following four cases. 
\begin{itemize}
	\item If $\varphi$ is $\forall \vec{x} \bigl(\Box(\psi(\vec{x}) \to \sigma(\vec{x})) \to (\Box \psi(\vec{x}) \to \Box \sigma(\vec{x})) \bigr)$, then $\pi_U(\varphi)$ is 
\[
	\forall \vec{x} \bigl(\PR_U(\gn{\psi(\vec{\dot{x}}) \to \sigma(\vec{\dot{x}})}) \to (\PR_U(\gn{\psi(\vec{\dot{x}})}) \to \PR_U(\gn{\sigma(\vec{\dot{x}})})) \bigr),
\]
and this is provable in $\PA$. 

	\item If $\varphi$ is $\forall \vec{x} \bigl(\Box \psi(\vec{x}) \to \Box \Box \psi(\vec{x}) \bigr)$, then $\pi_U(\varphi)$ is 
\[
	\forall \vec{x} \bigl(\PR_U(\gn{\psi(\vec{\dot{x}})}) \to \PR_U(\gn{\Box \psi(\vec{\dot{x}})}) \bigr).
\] 
	Since $\PA$ proves the fact that the consequences of $U$ are closed under the rule \textsc{Nec}, this sentence is provable in $\PA$. 

	\item If $\varphi$ is $\forall \vec{x} \bigl(\Box (\Box \psi(\vec{x}) \to \psi(\vec{x})) \to \Box \psi(\vec{x}) \bigr)$, then we reason as follows: 
By invoking \textsc{Nec}, 
\[
	\PA \vdash \PR_U(\gn{\Box \psi(\vec{\dot{x}}) \to \psi(\vec{\dot{x}})}) \to \PR_U(\gn{\Box(\Box \psi(\vec{\dot{x}}) \to \psi(\vec{\dot{x}}))}). 
\]
Since $U$ is an extension of $T$, we have $\PA \vdash \PR_U(\gn{\varphi})$ and hence
\[
	\PA \vdash \PR_U(\gn{\Box(\Box \psi(\vec{\dot{x}}) \to \psi(\vec{\dot{x}}))}) \to \PR_U(\gn{\Box \psi(\vec{\dot{x}})}). 
\]
Then, 
\[
	\PA \vdash \PR_U(\gn{\Box \psi(\vec{\dot{x}}) \to \psi(\vec{\dot{x}})}) \to \PR_U(\gn{\Box \psi(\vec{\dot{x}})}), 
\]
and thus
\[
	\PA \vdash \PR_U(\gn{\Box \psi(\vec{\dot{x}}) \to \psi(\vec{\dot{x}})}) \to \PR_U(\gn{\psi(\vec{\dot{x}})}). 
\]
This means $\PA \vdash \pi_U(\varphi)$. 

	\item If $\varphi(\vec{x})$ is derived from $\psi(\vec{x})$ by \textsc{Nec}, then $\varphi(\vec{x}) \equiv \Box \psi(\vec{x})$. 
	Since $U \vdash \psi(\vec{x})$, $\PA \vdash \PR_U(\gn{\psi(\vec{\dot{x}})})$. 
	Thus, $\PA \vdash \pi_U(\varphi(\vec{x}))$. 
\end{itemize}
\end{proof}

\begin{prop}\label{ax-soundness1}
Let $T$ be an $\LB$-theory obtained by adding some axioms of $\PA(\GL)$ into $\PA_\Box$, and let $U$ be any r.e.~extension of $T$. 
If $\NS \models \pi_U(\varphi)$ for all $\varphi \in U \setminus T$, then $U$ is $\LB$-sound. 
\end{prop}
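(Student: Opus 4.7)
The plan is to proceed by induction on the length of a proof of $\varphi$ in $U$, showing that for every $\LB$-formula $\varphi(\vec{x})$ with $U \vdash \varphi(\vec{x})$, we have $\NS \models \forall \vec{x}\, \pi_U(\varphi(\vec{x}))$. The proposition for sentences then falls out immediately, because for a sentence $\varphi$ the universal closure is $\pi_U(\varphi)$ itself. This mirrors the structure of the proof of Proposition \ref{pi}, but it is carried out semantically in $\NS$ rather than syntactically inside $\PA$, and it is broadened to allow extra axioms outside $T$.

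For the axiom cases I would split according to whether the axiom belongs to $T$ or to $U \setminus T$. If the axiom $\varphi$ belongs to $T$, then Proposition \ref{pi}, applied with the same $T$ and with $U$ itself playing the role of the extension, gives $\PA \vdash \pi_U(\varphi)$, and hence $\NS \models \pi_U(\varphi)$ by the soundness of $\PA$. If $\varphi \in U \setminus T$, then the hypothesis of the proposition directly supplies $\NS \models \pi_U(\varphi)$; since axioms are sentences, there are no free variables to worry about.

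For the inference rules, MP and \textsc{Gen} are routine because $\pi_U$ commutes with $\to$ and with $\forall$, so the required semantic closure follows immediately from the induction hypotheses. The interesting case is \textsc{Nec}, where $\Box\psi(\vec{x})$ is inferred from $\psi(\vec{x})$. Here I would not appeal to the induction hypothesis for $\psi$ at all; instead I would use the external fact that the previous step was a $U$-proof of $\psi(\vec{x})$, so by substitution $U \vdash \psi(\vec{\overline{n}})$ for every tuple $\vec{n}$. Since $\PR_U(x)$ is a provability predicate of $U$, this yields $\NS \models \PR_U(\gn{\psi(\vec{\overline{n}})})$ for every $\vec{n}$, i.e.\ $\NS \models \forall \vec{x}\, \PR_U(\gn{\psi(\vec{\dot{x}})})$, which is exactly $\NS \models \forall \vec{x}\, \pi_U(\Box\psi(\vec{x}))$.

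The argument is essentially bookkeeping; I do not expect a genuine obstacle. The one place that calls for some care is the \textsc{Nec} step, where the semantic treatment forces a small shift from the purely syntactic induction used in Proposition \ref{pi}: one has to remember that the justification comes from the $U$-provability witnessed in the proof tree rather than from any translated induction hypothesis about $\psi$. Once this is handled cleanly, the $\LB$-soundness of $U$ follows at once.
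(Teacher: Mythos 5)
Your proposal is correct and follows essentially the same route as the paper: induction on the length of the $U$-proof, with Proposition \ref{pi} (applied to $U$ as its own extension) handling the axioms of $T$, the hypothesis handling $U \setminus T$, and the \textsc{Nec} case discharged by observing that $U \vdash \psi(\vec{\overline{n}})$ for every tuple $\vec{n}$, so that $\NS \models \forall \vec{x}\, \PR_U(\gn{\psi(\vec{\dot{x}})})$ directly. Your explicit remark that the \textsc{Nec} step bypasses the induction hypothesis for $\psi$ matches exactly what the paper does.
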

\begin{proof}
Suppose that $\NS \models \pi_U(\varphi)$ for all $\varphi \in U \setminus T$. 
We prove by induction on the length of a proof of $\varphi$ in $U$ that for all $\mathcal{L}_A$-formulas $\varphi$, if $U \vdash \varphi$, then $\NS \models \pi_U(\forall \vec{x} \varphi)$. 
\begin{itemize}
	\item If $\varphi$ is an axiom of $T$ or a logical axiom, then $\PA \vdash \pi_U(\forall \vec{x} \varphi)$ by Proposition \ref{pi}. 
	Thus, $\NS \models \pi_U(\forall \vec{x} \varphi)$. 
	\item If $\varphi$ is in $U \setminus T$, then $\NS \models \pi_U(\varphi)$ by the supposition. 
	\item If $\varphi$ is derived from $\psi$ and $\psi \to \varphi$ by MP, then by the induction hypothesis, $\NS \models \pi_U(\forall \vec{x} \psi)$ and $\NS \models \pi_U(\forall \vec{x}( \psi \to \varphi))$. 
	Then, $\NS \models \pi_U(\forall \vec{x} \varphi)$. 
	\item If $\varphi$ is derived from $\psi(y)$ by \textsc{Gen}, then $\varphi \equiv \forall y \psi(y)$. 
	By the induction hypothesis, $\NS \models \pi_U(\forall \vec{x} \forall y \psi(y))$. 
	Hence, $\NS \models \pi_U(\forall \vec{x} \varphi)$. 
	\item If $\varphi$ is derived from $\psi$ by \textsc{Nec}, then $\varphi \equiv \Box \psi$ and $U \vdash \psi$. 
	We have $\NS \models \forall \vec{x} \, \PR_U(\gn{\psi})$, and equivalently $\NS \models \pi_U(\forall \vec{x} \varphi)$. 
\end{itemize}
\end{proof}

\begin{cor}
The theories $\PA_\Box$, $\PA(\K)$, $\PA(\KF)$, and $\PA(\GL)$ are $\LB$-sound. 
\end{cor}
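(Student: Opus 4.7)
The plan is to apply Proposition \ref{ax-soundness1} with the theory $T$ in that proposition equal to $U$ itself for each of the four theories under consideration. Each of $\PA_\Box$, $\PA(\K)$, $\PA(\KF)$, and $\PA(\GL)$ is by construction obtained by adding (possibly zero) axioms of $\PA(\GL)$ to $\PA_\Box$: the K-axiom scheme, the 4-axiom scheme, and the L\"ob axiom scheme are all among the axioms of $\PA(\GL)$. Hence each theory $U$ in the list fits the hypothesis of Proposition \ref{ax-soundness1} applied with $T := U$.

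With $T = U$, the set $U \setminus T$ is empty, so the hypothesis ``$\NS \models \pi_U(\varphi)$ for all $\varphi \in U \setminus T$'' is vacuously satisfied. Proposition \ref{ax-soundness1} therefore yields that $U$ is $\LB$-sound. Applied in turn to $U = \PA_\Box$, $U = \PA(\K)$, $U = \PA(\KF)$, and $U = \PA(\GL)$, this gives the corollary.

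There is no real obstacle here, since the entire content is packaged inside Proposition \ref{pi} and Proposition \ref{ax-soundness1}. The one thing worth double-checking is that each of the four theories is indeed of the stated form, i.e.\ that $\PA_\Box \subseteq \PA(\K) \subseteq \PA(\KF) \subseteq \PA(\GL)$ and that all of the additional modal axioms schemes (K, 4, L\"ob) are axioms of $\PA(\GL)$; this is immediate from the definitions of the theories given earlier in Section \ref{Sec:MA}.
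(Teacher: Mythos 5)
Your proof is correct and matches the paper's intent exactly: the corollary is stated immediately after Proposition \ref{ax-soundness1} precisely because it follows by taking $U = T$ there, so that $U \setminus T$ is empty and the hypothesis holds vacuously. Your check that each of the four theories is obtained by adding (possibly no) axioms of $\PA(\GL)$ to $\PA_\Box$ and is r.e.\ is the only substantive point, and it is handled correctly.
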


Here, we give some more examples of $\LB$-sound theories. 
Let $x \in W_y$ be a $\Sigma_1$ formula saying that $x$ is in the $y$-th r.e.~set. 
Reinhardt's Weak Mechanistic Thesis (WMT) is the following schema:  
\begin{itemize}
	\item $\exists y \forall x \bigl(\Box \varphi(x) \leftrightarrow x \in W_y \bigr)$, where $\varphi(x)$ is an $\LB$-formula having lone free variable $x$. 
\end{itemize}
When $\Box$ is interpreted as knowledge, WMT can be thought as a formalization of `Knowledge is mechanical'. 
Concerning WMT, we obtain the following corollary to Proposition \ref{ax-soundness1}. 

\begin{cor}
Let $T$ be an r.e.~$\LB$-theory obtained by adding some axioms of $\PA(\GL)$ into $\PA_\Box$. 
Then, the theory $U : = T + \mathrm{WMT}$ is $\LB$-sound. 
\end{cor}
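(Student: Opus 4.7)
The plan is to invoke Proposition \ref{ax-soundness1}, which reduces the $\LB$-soundness of $U = T + \mathrm{WMT}$ to checking that $\NS \models \pi_U(\sigma)$ for every $\sigma \in U \setminus T$, and in particular for every instance of the WMT schema. So it suffices to fix an arbitrary $\LB$-formula $\varphi(x)$ with $x$ its sole free variable and verify the truth in $\NS$ of the $\pi_U$-translation of the corresponding WMT axiom.

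Unwinding the definition of $\pi_U$, the translation of $\exists y \forall x (\Box \varphi(x) \leftrightarrow x \in W_y)$ is the $\mathcal{L}_A$-sentence
\[
    \exists y \forall x \bigl( \PR_U(\gn{\varphi(\dot{x})}) \leftrightarrow x \in W_y \bigr).
\]
Here the atomic formula $x \in W_y$ is already in $\mathcal{L}_A$ and is untouched by $\pi_U$. Thus the goal becomes: exhibit an $\NS$-witness $y_0$ such that $W_{y_0}$ coincides with the set of $n$ for which $\NS \models \PR_U(\gn{\varphi(\overline{n})})$.

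Since $U$ is recursively enumerable, so is the set $A_\varphi := \{ n \in \NS : U \vdash \varphi(\overline{n}) \}$, uniformly in the (arithmetized) $\LB$-formula $\varphi(x)$. By the universality of the enumeration $\{W_y\}_{y \in \NS}$ of all r.e.\ sets, there is an index $y_0$ with $W_{y_0} = A_\varphi$. Because $\PR_U(v)$ is a $\Sigma_1$ provability predicate of the r.e.\ theory $U$, for each $n$ we have $\NS \models \PR_U(\gn{\varphi(\overline{n})})$ iff $U \vdash \varphi(\overline{n})$, iff $n \in A_\varphi = W_{y_0}$. This gives $\NS \models \forall x (\PR_U(\gn{\varphi(\dot{x})}) \leftrightarrow x \in W_{y_0})$, witnessing the required existential.

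There is no real obstacle; the only point requiring a little care is keeping track of what $\pi_U$ does to the free variable $x$ inside $\Box \varphi(x)$, namely that it produces $\PR_U(\gn{\varphi(\dot{x})})$ with $x$ remaining free via the primitive recursive numeral term $\dot{x}$. Once that is noted, WMT is verified essentially by definition of what it means for $U$ to be r.e., and Proposition \ref{ax-soundness1} delivers the conclusion.
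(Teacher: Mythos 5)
Your proposal is correct and follows essentially the same route as the paper: both reduce the claim to Proposition \ref{ax-soundness1} and then verify $\NS \models \pi_U(\mathrm{WMT})$ by producing an r.e.\ index $e$ with $W_e$ equal to the set defined by the $\Sigma_1$ formula $\PR_U(\gn{\varphi(\dot{x})})$. The only cosmetic difference is that the paper obtains the index directly from the fact that every $\Sigma_1$-definable set is r.e., whereas you pass through the set $\{n : U \vdash \varphi(\overline{n})\}$; these are the same observation.
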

\begin{proof}
Since $\PR_U(\gn{\varphi(\dot{x})})$ is a $\Sigma_1$ formula, there exists a natural number $e$ such that 
\[
	\NS \models \forall x \bigl(\PR_U(\gn{\varphi(\dot{x})}) \leftrightarrow x \in W_{\overline{e}} \bigr). 
\]
Then, we have $\NS \models \pi_U \Bigl(\exists y \forall x \bigl(\Box \varphi(x) \leftrightarrow x \in W_y \bigr) \Bigr)$. 
By Proposition \ref{ax-soundness1}, the theory $U$ is $\LB$-sound. 
\end{proof}

We prove an analogue of Proposition \ref{pi} with respect to $\pi'$-translations. 

\begin{prop}\label{pi'}
Let $T$ be an $\LB$-theory obtained by adding some axioms of $\PA(\SF)$ into $\PA_\Box$. 
For any $\LB$-formula $\varphi$, if $T \vdash \varphi$, then for any r.e.~extension $U$ of $T$, $\PA \vdash \pi'_U(\varphi)$. 
\end{prop}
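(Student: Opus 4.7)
The plan is to mimic the proof of Proposition~\ref{pi} with $\pi'_U$ in place of $\pi_U$, performing induction on the length of a $T$-proof of $\varphi$ and establishing at each step that $\PA$ proves the universal closure of the $\pi'_U$-translation of the current formula. Since $\pi'_U$ commutes with the Boolean connectives and the quantifiers and fixes $\mathcal{L}_A$-formulas, the cases for axioms of $\PA$, logical axioms, induction axioms for $\LB$-formulas, and the rules \textsc{MP} and \textsc{Gen} carry over verbatim from that proof.

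The new content is concentrated in the rule \textsc{Nec} and in the modal axioms taken from $\PA(\SF)$. For \textsc{Nec}: if $\varphi \equiv \Box\psi(\vec{x})$ is obtained from $\psi(\vec{x})$ with $U \vdash \psi(\vec{x})$, then $\PA \vdash \PR_U(\gn{\psi(\vec{\dot{x}})})$ by the definition of the provability predicate, while $\PA \vdash \pi'_U(\forall \vec{x}\,\psi)$ holds by the induction hypothesis, so unfolding $\pi'_U(\Box\psi) \equiv \PR_U(\gn{\psi(\vec{\dot{x}})}) \land \pi'_U(\psi)$ yields the required conclusion. For each modal axiom the $\pi'_U$-translation splits into a provability part and a ``slash'' part. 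The $\K$-axiom needs $\PA \vdash \PR_U(\gn{\psi \to \sigma}) \land \PR_U(\gn{\psi}) \to \PR_U(\gn{\sigma})$ on the first part and the propositional tautology $((A \to B) \land A) \to B$ on the second; the $4$-axiom needs the formalized internalization of \textsc{Nec}, namely $\PA \vdash \PR_U(\gn{\psi}) \to \PR_U(\gn{\Box\psi})$, together with the self-implication of $\pi'_U(\psi)$; the reflection axiom $\Box\psi \to \psi$ translates to $(\PR_U(\gn{\psi(\vec{\dot{x}})}) \land \pi'_U(\psi)) \to \pi'_U(\psi)$, which is a propositional tautology.

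I do not anticipate a real obstacle; the adaptation is essentially syntactic. The only point worth emphasising is that the reflection case is trivialized precisely because $\pi'_U(\Box\psi)$ carries $\pi'_U(\psi)$ as a conjunct, and this same feature is what blocks an analogue of the proposition over $\PA(\GL)$: under $\pi'_U$ the L\"ob axiom $\Box(\Box\psi \to \psi) \to \Box\psi$ is not in general $\PA$-provable, so the result really is tied to theories whose modal axioms lie within $\PA(\SF)$.
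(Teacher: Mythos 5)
Your proposal is correct and follows essentially the same route as the paper's proof: induction on the length of the $T$-proof, with the only nontrivial cases being \textsc{Nec} (handled via $U \vdash \psi$ and the induction hypothesis) and the three modal axiom schemes, each of whose $\pi'_U$-translations is verified in $\PA$ exactly as you describe. Your closing observation about why the L\"ob axiom blocks an analogue over $\PA(\GL)$ under $\pi'_U$ is a sensible aside but not part of the required argument.
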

\begin{proof}
Let $U$ be any r.e.~extension of $T$. 
As in the proof of Proposition \ref{alpha}, we prove by induction on the length of proofs of $\varphi$ in $T$ that for any $\LB$-formula $\varphi$, if $T \vdash \varphi$, then $\PA \vdash \pi'_U(\varphi)$. 
We only give proofs of the following four cases. 
\begin{itemize}
	\item If $\varphi$ is $\forall \vec{x} \bigl(\Box(\psi(\vec{x}) \to \sigma(\vec{x})) \to (\Box \psi(\vec{x}) \to \Box \sigma(\vec{x})) \bigr)$, then $\pi'_U(\varphi)$ is
\begin{align*}
	& \forall \vec{x} \Bigl(\PR_U(\gn{\psi(\vec{\dot{x}}) \to \sigma(\vec{\dot{x}})}) \land \pi'_U(\psi(\vec{x}) \to \sigma(\vec{x}))\\
	& \ \ \ \ \  \to \bigl(\bigl[\PR_U(\gn{\psi(\vec{\dot{x}})}) \land \pi'_U(\psi(\vec{x})) \bigr] \to \bigl[\PR_U(\gn{\sigma(\vec{\dot{x}})}) \land \pi'_U(\sigma(\vec{x})) \bigr] \bigr) \Bigr).
\end{align*}
	This sentence is provable in $\PA$. 

	\item If $\varphi$ is $\forall \vec{x}(\Box \psi(\vec{x}) \to \psi(\vec{x}))$, then $\pi'_U(\varphi)$ is
\[
	\forall \vec{x} \bigl(\PR_U(\gn{\psi(\vec{\dot{x}})}) \land \pi'_U(\psi(\vec{x})) \to \pi'_U(\psi(\vec{x})) \bigr), 
\]
and this is obviously provable in $\PA$. 

	\item If $\varphi$ is $\forall \vec{x}(\Box \psi(\vec{x}) \to \Box \Box \psi(\vec{x}))$, then $\pi_U(\varphi)$ is 
\[
	\forall \vec{x} \Bigl(\bigl[\PR_U(\gn{\psi(\vec{\dot{x}})}) \land \pi'_U(\psi(\vec{x})) \bigr] \to \bigl[\PR_U(\gn{\Box \psi(\vec{\dot{x}})}) \land \PR_U(\gn{\psi(\vec{\dot{x}})}) \land \pi'_U(\psi(\vec{x})) \bigr] \Bigr),
\]
and this is provable in $\PA$. 

	\item If $\varphi(\vec{x})$ is derived from $\psi(\vec{x})$ by \textsc{Nec}, then $\varphi(\vec{x}) \equiv \Box \psi(\vec{x})$. 
	Since $T \vdash \psi(\vec{x})$, by the induction hypothesis, $\PA \vdash \pi'_U(\psi(\vec{x}))$. 
	Also, $\PA \vdash \PR_U(\gn{\psi(\vec{\dot{x}})})$ because $U$ is an extension of $T$. 
	Thus, $\PA \vdash \pi'_U(\varphi(\vec{x}))$. 
\end{itemize}\end{proof}

As in the proof of Proposition \ref{ax-soundness1}, we can prove the following proposition from Propositions \ref{soundness} and \ref{pi'}. 

\begin{prop}[{cf.~\cite[TB]{Sha85}}]\label{ax-soundness2}
Let $T$ be an $\LB$-theory obtained by adding some axioms of $\PA(\SF)$ into $\PA_\Box$, and let $U$ be any r.e.~extension of $T$. 
If $\NS \models \pi'_U(\varphi)$ for all $\varphi \in U \setminus T$, then $U$ is $\LB$-sound. 
\end{prop}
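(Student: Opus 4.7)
The plan is to imitate the proof of Proposition \ref{ax-soundness1}, substituting $\pi'_U$ for $\pi_U$ throughout, and to invoke Proposition \ref{soundness} at the end to convert alternative $\LB$-soundness into $\LB$-soundness. Proposition \ref{pi'} plays the role that Proposition \ref{pi} played before, supplying the base cases for the axioms of $T$ in the new translation.

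Concretely, I would first prove by induction on the length of a proof of $\varphi$ in $U$ that for every $\LB$-formula $\varphi$ with free variables $\vec{x}$, if $U \vdash \varphi$ then $\NS \models \pi'_U(\forall \vec{x} \varphi)$. For axioms of $T$ (namely the logical axioms, the induction axioms of $\PA_\Box$, and the modal axioms drawn from $\PA(\SF)$), Proposition \ref{pi'} gives $\PA \vdash \pi'_U(\forall \vec{x} \varphi)$, whence $\NS \models \pi'_U(\forall \vec{x} \varphi)$. For axioms $\varphi \in U \setminus T$, the hypothesis of the proposition supplies $\NS \models \pi'_U(\varphi)$ directly. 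The MP and Gen steps go through routinely from the induction hypothesis, essentially word-for-word as in Proposition \ref{ax-soundness1}.

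The only step where the $\pi'$-translation requires care is Nec: if $\varphi \equiv \Box \psi$ is derived from $\psi$ with free variables $\vec{x}$, then the induction hypothesis yields $\NS \models \pi'_U(\forall \vec{x} \psi)$, and $U \vdash \psi$ yields $\NS \models \forall \vec{x}\, \PR_U(\gn{\psi(\vec{\dot{x}})})$ by definition of $\PR_U$. Conjoining these gives $\NS \models \forall \vec{x} \bigl(\PR_U(\gn{\psi(\vec{\dot{x}})}) \land \pi'_U(\psi(\vec{x})) \bigr)$, which is exactly $\NS \models \pi'_U(\forall \vec{x} \Box \psi)$. This is the sole divergence from the proof of Proposition \ref{ax-soundness1}, where only the provability conjunct was needed; here, the recursive conjunct is precisely what the induction hypothesis is set up to deliver.

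Having established that $\NS \models \pi'_U(\varphi)$ for every $\LB$-sentence $\varphi$ provable in $U$, i.e.\ that $U$ is alternatively $\LB$-sound, Proposition \ref{soundness} lets me conclude that $U$ is $\LB$-sound. I do not anticipate any real obstacle: the Nec case is the only non-trivial check, and it works cleanly because $\pi'_U$ was designed so that both the $\PR_U$ conjunct and the recursive $\pi'_U$ conjunct become simultaneously available from the induction hypothesis together with the closure of the $U$-theorems under necessitation.
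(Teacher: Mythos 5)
Your proposal is correct and is exactly the argument the paper intends: the paper itself states only that the proposition "can be proved as in the proof of Proposition \ref{ax-soundness1} from Propositions \ref{soundness} and \ref{pi'}," which is precisely your induction with $\pi'_U$ in place of $\pi_U$, the Nec case handled via the extra recursive conjunct, and Proposition \ref{soundness} applied at the end.
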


\begin{cor}
The theories $\PA(\KT)$ and $\PA(\SF)$ are $\LB$-sound. 
\end{cor}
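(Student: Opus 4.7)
The plan is to derive this corollary directly from Proposition \ref{ax-soundness2} by taking $U = T$ in each case, so that the hypothesis on $U \setminus T$ becomes vacuous.

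First I would verify that both $\PA(\KT)$ and $\PA(\SF)$ fit the template of Proposition \ref{ax-soundness2}, namely that they are obtained by adding some axioms of $\PA(\SF)$ to $\PA_\Box$. For $\PA(\SF)$ this is immediate by definition. For $\PA(\KT)$, its axioms over $\PA_\Box$ are the $\K$-axioms and the $\KT$-axioms $\forall \vec{x}(\Box \varphi \to \varphi)$, and both schemata are axiom schemata of $\PA(\SF) = \PA(\KT) + \{\forall \vec{x}(\Box \varphi \to \Box \Box \varphi)\}$.

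Next I would apply Proposition \ref{ax-soundness2} twice, in each case with the choice $T = U$. Concretely, take $T = U = \PA(\SF)$; then $U \setminus T = \emptyset$, so the assumption ``$\NS \models \pi'_U(\varphi)$ for all $\varphi \in U \setminus T$'' holds vacuously, and the proposition yields that $\PA(\SF)$ is $\LB$-sound. Repeating the same argument with $T = U = \PA(\KT)$ gives the $\LB$-soundness of $\PA(\KT)$.

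There is no real obstacle here; the substantive work has already been carried out in Propositions \ref{soundness}, \ref{pi'}, and \ref{ax-soundness2}. The only point to be a little careful about is the verification that $\PA(\KT)$ qualifies as a theory obtained from $\PA_\Box$ by adjoining some axioms of $\PA(\SF)$, which is immediate from the definitions of these theories listed in Section \ref{Sec:MA}.
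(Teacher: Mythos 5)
Your proposal is correct and matches the paper's (implicit) argument: the corollary is intended as an immediate consequence of Proposition \ref{ax-soundness2} with $U = T$, the hypothesis on $U \setminus T$ holding vacuously, and your check that $\PA(\KT)$ is obtained by adding a subset of the axiom schemata of $\PA(\SF)$ to $\PA_\Box$ is exactly the observation needed.
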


The alternative $\LB$-soundness of $\PA(\SF)$ is already proved by Shapiro \cite[TB']{Sha85}. 

\begin{cor}\label{egLB-sound}
Let $T$ be an r.e.~$\LB$-theory obtained by adding some axioms of $\PA(\SF) + \{\Box \Diamond \varphi \to \varphi \mid \varphi$ is an $\mathcal{L}_A$-sentence$\}$ into $\PA_\Box$. 
Then, $T$ is $\LB$-sound. 
\end{cor}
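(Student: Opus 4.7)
The plan is to apply Proposition~\ref{ax-soundness2}. Write $T = T_0 + X$, where $T_0$ is obtained from $\PA_\Box$ by adding just the $\PA(\SF)$-axioms used in forming $T$, and $X$ is the set of added axioms of the form $\Box \Diamond \varphi \to \varphi$ with $\varphi$ an $\mathcal{L}_A$-sentence. Since $T_0$ meets the hypothesis of Proposition~\ref{ax-soundness2} and $T$ is an r.e.~extension of $T_0$, it suffices to verify that $\NS \models \pi'_T(\psi)$ for every $\psi \in X$.

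The key preliminary step is the observation that $T$ is a subtheory of $\PA(\Triv)$. Every axiom of $\PA(\SF)$ is provable in $\PA(\Triv)$, because the scheme $\Box \varphi \leftrightarrow \varphi$ available in $\PA(\Triv)$ yields each of the schemes K, T, and 4. Moreover each added axiom $\Box \Diamond \varphi \to \varphi$ is provable in $\PA(\Triv)$, since there
\[
	\Box \Diamond \varphi \leftrightarrow \Diamond \varphi \leftrightarrow \neg \Box \neg \varphi \leftrightarrow \neg \neg \varphi \leftrightarrow \varphi.
\]
By Corollary~\ref{Triv}, $\PA(\Triv)$ is $\mathcal{L}_A$-sound, and hence so is any of its subtheories; in particular, $T$ is $\mathcal{L}_A$-sound.

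Now fix any $\psi = \Box \Diamond \varphi \to \varphi$ in $X$. Because $\varphi$ is an $\mathcal{L}_A$-sentence, unfolding $\pi'_T$ yields
\[
	\pi'_T(\psi) = \bigl[\PR_T(\gn{\Diamond \varphi}) \land \neg \bigl(\PR_T(\gn{\neg \varphi}) \land \neg \varphi \bigr) \bigr] \to \varphi.
\]
Assume the antecedent holds in $\NS$. Then $T \vdash \Diamond \varphi$, so by the rule \textsc{Nec} we have $T \vdash \Box \Diamond \varphi$, and combining with the axiom $\psi$ gives $T \vdash \varphi$. By the $\mathcal{L}_A$-soundness of $T$ established above, $\NS \models \varphi$. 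Hence $\NS \models \pi'_T(\psi)$, and Proposition~\ref{ax-soundness2} delivers the $\LB$-soundness of $T$.

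The main obstacle is avoiding the circular trap of trying to extract $\mathcal{L}_A$-soundness from within the verification of the very axioms one wishes to verify; this is bypassed by noting the inclusion $T \subseteq \PA(\Triv)$, which gives $\mathcal{L}_A$-soundness essentially for free and reduces the rest of the argument to a short unfolding of $\pi'_T$ followed by a single appeal to \textsc{Nec}.
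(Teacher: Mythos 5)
Your proof is correct and takes essentially the same route as the paper's: both reduce the claim to Proposition~\ref{ax-soundness2}, derive $T \vdash \varphi$ from the antecedent of $\pi'_T(\Box\Diamond\varphi\to\varphi)$ via \textsc{Nec} and the axiom, and conclude $\NS\models\varphi$ from the inclusion of $T$ in $\PA(\Triv)$ together with Corollary~\ref{Triv}. The only cosmetic difference is that the paper routes the final step through conservativity over $\PA$ and the $\mathcal{L}_A$-soundness of $\PA$, whereas you invoke the $\mathcal{L}_A$-soundness of $T$ directly as a subtheory of $\PA(\Triv)$.
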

\begin{proof}
By Proposition \ref{ax-soundness2}, it suffices to show that for any $\mathcal{L}_A$-sentence $\varphi$, if $\Box \Diamond \varphi \to \varphi \in T$, then $\NS \models \pi'_T(\Box \Diamond \varphi \to \varphi)$. 
Suppose that $\Box \Diamond \varphi \to \varphi \in T$ and $\NS \models \pi'_T(\Box \Diamond \varphi)$. 
Then, $T \vdash \Diamond \varphi$, and $T \vdash \Box \Diamond \varphi$. 
Since $T \vdash \Box \Diamond \varphi \to \varphi$, we have $T \vdash \varphi$. 
Since $T$ is a subtheory of $\PA(\Triv)$, $T$ is an conservative extension of $\PA$ by Corollary \ref{Triv}. 
Then, $\PA \vdash \varphi$ because $\varphi$ is an $\mathcal{L}_A$-sentence. 
By the $\mathcal{L}_A$-soundness of $\PA$, we have $\NS \models \varphi$ and hence $\NS \models \pi_T'(\varphi)$. 
We have proved $\NS \models \pi'_U(\Box \Diamond \varphi \to \varphi)$. 
\end{proof}

In contrast to Corollary \ref{egLB-sound}, we have the following proposition which is a refinement of Proposition \ref{nDP}.1.

\begin{prop}
Let $T$ be a consistent r.e.~$\LB$-theory extending the theory $\PA(\KT) + \{\Box \Diamond \varphi \to \varphi \mid \varphi$ is a $\SB$-sentence$\}$. 
Then, $T$ does not have $(\B, \Sigma_1)$-$\DP$. 
\end{prop}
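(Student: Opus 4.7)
The plan is a proof by contradiction using a Gödel sentence together with a well-chosen instance of the axiom scheme.

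Suppose for contradiction that $T$ has the $(\B, \Sigma_1)$-$\DP$. Since $T \supseteq \PA(\KT)$ and is consistent, the $\KT$-axiom $\Box \bot \to \bot$ forces $T \nvdash \Box \bot$; hence by Proposition \ref{DPtoS1sound}, $T$ is $\Sigma_1$-sound. Let $\gamma$ be a Gödel sentence of $T$, i.e., $\PA \vdash \gamma \leftrightarrow \neg \PR_T(\gn{\gamma})$, so that $\gamma \in \Pi_1$ and the $\Sigma_1$-sentence $\PR_T(\gn{\gamma})$ is $\PA$-equivalent to $\neg \gamma$. By Gödel's first incompleteness theorem, $T \nvdash \gamma$; by $\Sigma_1$-soundness, $T \nvdash \PR_T(\gn{\gamma})$; and using $\KT$ together with \textsc{Nec}, $T \vdash \Box \gamma$ is meta-theoretically equivalent to $T \vdash \gamma$, so $T \nvdash \Box \gamma$.

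The crucial step is to derive $T \vdash \Box \gamma \lor \neg \gamma$, which will yield a counterexample to $(\B, \Sigma_1)$-$\DP$ since $\Box \gamma \in \B$ and $\neg \gamma$ is $T$-equivalent to the $\Sigma_1$-sentence $\PR_T(\gn{\gamma})$, with both unprovable. The key idea is to feed the sentence $\Box \gamma \lor \neg \gamma$ itself to the axiom scheme: since $\Box \gamma \in \B \subseteq \SB$, $\neg \gamma \in \Sigma_1 \subseteq \SB$, and $\SB$ is closed under $\lor$, the sentence $\Box \gamma \lor \neg \gamma$ lies in $\SB$. The axiom instance thus yields
\[
T \vdash \Box \Diamond (\Box \gamma \lor \neg \gamma) \to \Box \gamma \lor \neg \gamma.
\]
It remains to verify $T \vdash \Box \Diamond (\Box \gamma \lor \neg \gamma)$. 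Using the $\K$-provable distribution $\Diamond(A \lor B) \leftrightarrow \Diamond A \lor \Diamond B$ and the definitional identity $\Diamond \neg \gamma = \neg \Box \gamma$, we have
\[
T \vdash \Diamond(\Box \gamma \lor \neg \gamma) \leftrightarrow \Diamond \Box \gamma \lor \neg \Box \gamma.
\]
The right-hand side is a $\KT$-theorem: it is classically equivalent to $\Box \gamma \to \Diamond \Box \gamma$, and the schema $X \to \Diamond X$ follows by contraposing the $\KT$-axiom $\Box \neg X \to \neg X$ with $X := \Box \gamma$. Applying \textsc{Nec} then gives $T \vdash \Box(\Diamond \Box \gamma \lor \neg \Box \gamma)$, and hence $T \vdash \Box \Diamond(\Box \gamma \lor \neg \gamma)$. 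By MP, $T \vdash \Box \gamma \lor \neg \gamma$, contradicting $(\B, \Sigma_1)$-$\DP$.

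The only step that required nontrivial insight was the choice of the compound $\SB$-sentence $\Box \gamma \lor \neg \gamma$; feeding it into the axiom scheme is what dispenses with any intricate chaining of modal implications. Once that choice is made, all remaining manipulations are routine modal reasoning inside $\PA(\KT)$.
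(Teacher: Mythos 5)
Your proof is correct and follows essentially the same route as the paper: both arguments hinge on the observation that $\Diamond(\Box\psi\lor\sigma)$ is a $\KT$-theorem for the relevant $\psi,\sigma$, so that necessitation plus the axiom instance for the compound $\SB$ sentence $\Box\psi\lor\sigma$ yields $T\vdash\Box\psi\lor\sigma$ with both disjuncts unprovable. The paper argues directly, taking an arbitrary $\Sigma_1$ sentence $\varphi$ independent of $T$ and the disjunction $\Box\neg\varphi\lor\varphi$, which avoids your detour through $(\B,\Sigma_1)$-$\DP\Rightarrow\Sigma_1$-soundness; your contradiction-based variant with a G\"odel sentence is fine but does strictly more work. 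One small repair is needed: since $\gamma$ is $\Pi_1$, the sentence $\neg\gamma$ is only $T$-equivalent to a $\Sigma_1$ sentence, so $\Box\gamma\lor\neg\gamma$ is not literally a $\SB$ sentence and the axiom instance $\Box\Diamond(\Box\gamma\lor\neg\gamma)\to(\Box\gamma\lor\neg\gamma)$ is not literally in the scheme; replace $\neg\gamma$ by $\PR_T(\gn{\gamma})$ throughout (using $T\vdash\Diamond\neg\gamma\to\Diamond\PR_T(\gn{\gamma})$), or start from a genuinely $\Sigma_1$ independent sentence as the paper does.
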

\begin{proof}
Let $\varphi$ be a $\Sigma_1$ sentence such that $T \nvdash \varphi$ and $T \nvdash \neg \varphi$. 
Then, $T \nvdash \Box \neg \varphi$ because $T \vdash \Box \neg \varphi \to \neg \varphi$. 
Since $T \vdash \Box \Diamond \varphi \to \Diamond \varphi$, we have $T \vdash \Diamond \Box \neg \varphi \lor \Diamond \varphi$. 
Then, $T \vdash \Diamond (\Box \neg \varphi \lor \varphi)$ and hence $T \vdash \Box \Diamond (\Box \neg \varphi \lor \varphi)$. 
Since $\Box \neg \varphi \lor \varphi$ is a $\SB$ sentence, we obtain $T \vdash \Box \neg \varphi \lor \varphi$ because $\Box \Diamond (\Box \neg \varphi \lor \varphi) \to (\Box \neg \varphi \lor \varphi)$ is an axiom of $T$. 
We have shown that $T \nvdash \Box \neg \varphi$, $T \nvdash \varphi$, and $T \vdash \Box \neg \varphi \lor \varphi$. 
This means that $T$ does not have $(\B, \Sigma_1)$-$\DP$. 
\end{proof}

\subsection{$\SB$-soundness and weak $\SB$-soundness}

We then export the notion of the $\Sigma_1$-soundness to modal arithmetic. 
This is easy to do since we have already introduced the class $\SB$ corresponding to $\Sigma_1$ in modal arithmetic. 
Here we further introduce another type of translation $\rho_T$, which is different from $\pi_T$.

\begin{definition}[$\rho$-translations]
Let $T$ be any r.e.~$\LB$-theory. 
We define a translation $\rho_T$ of $\LB$-formulas into $\mathcal{L}_A$-formulas inductively as follows: 
\begin{enumerate}
	\item If $\varphi$ is an $\mathcal{L}_A$-formula, then $\rho_T(\varphi) : \equiv \varphi$; 
	\item $\rho_T$ preserves logical connectives and quantifiers; 
	\item $\rho_T(\Box \varphi(\vec{x})) : \equiv \PR_T(\ulcorner \Box \varphi(\vec{\dot{x}}) \urcorner)$. 
\end{enumerate}
\end{definition}

With respect to $\SB$ sentences, there is the following relationship between the translations $\pi_T$ and $\rho_T$. 

\begin{prop}\label{piVSrho}
Let $T$ be any r.e.~$\LB$-theory.  
\begin{enumerate}
	\item For any $\SB$-sentence $\varphi$, $\NS \models \pi_T(\varphi) \to \rho_T(\varphi)$; 
	\item If $T$ is closed under the box elimination rule, then for any $\SB$-sentence $\varphi$, $\NS \models \rho_T(\varphi) \to \pi_T(\varphi)$.  
\end{enumerate}
\end{prop}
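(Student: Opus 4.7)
The plan is to prove both clauses simultaneously by induction on the construction of $\varphi$ as an $\SB$ formula. To make the induction close under the quantifier rules, I would strengthen the statement to cover $\SB$ formulas $\varphi(\vec{x})$ with arbitrary free variables, establishing that for every tuple $\vec{n}$ of natural numbers, $\NS \models \pi_T(\varphi(\vec{\overline{n}})) \to \rho_T(\varphi(\vec{\overline{n}}))$ (and, under the extra hypothesis of Clause 2, the reverse implication as well). The original statement for sentences is the special case of empty $\vec{x}$.

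For Clause 1, the only nontrivial case is the box base case. If $\varphi(\vec{x})$ is $\Box\psi(\vec{x})$, then $\pi_T(\varphi)$ is $\PR_T(\gn{\psi(\vec{\dot{x}})})$ and $\rho_T(\varphi)$ is $\PR_T(\gn{\Box\psi(\vec{\dot{x}})})$. Assume $\NS \models \PR_T(\gn{\psi(\vec{\overline{n}})})$, i.e.\ $T \vdash \psi(\vec{\overline{n}})$; by the rule \textsc{Nec} we have $T \vdash \Box\psi(\vec{\overline{n}})$, whence $\NS \models \PR_T(\gn{\Box\psi(\vec{\overline{n}})})$. If $\varphi$ is $\Sigma_1$, then $\pi_T(\varphi)$ and $\rho_T(\varphi)$ are literally $\varphi$, so the implication is trivial. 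The inductive steps for $\land$, $\lor$, $\exists x$, and $\forall x < t$ are immediate because $\pi_T$ and $\rho_T$ both commute with connectives and quantifiers, and the truth predicate of $\NS$ does as well.

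For Clause 2, the situation is symmetric, with the box case being where the hypothesis of closure under the box elimination rule is used. If $\varphi(\vec{x})$ is $\Box\psi(\vec{x})$ and $\NS \models \PR_T(\gn{\Box\psi(\vec{\overline{n}})})$, then $T \vdash \Box\psi(\vec{\overline{n}})$, so by closure under the box elimination rule (applied to the sentence $\psi(\vec{\overline{n}})$) we get $T \vdash \psi(\vec{\overline{n}})$, hence $\NS \models \PR_T(\gn{\psi(\vec{\overline{n}})})$. The $\Sigma_1$ base case is again trivial, and the inductive steps are again pointwise on the connectives.

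There is no serious obstacle: the argument is a routine induction. The only mild subtlety worth pointing out in the write-up is that the induction has to be framed in terms of numerical substitutions, so that the box base case reduces to \textsc{Nec} (for Clause 1) and to the box elimination rule applied to a sentence (for Clause 2); one needs to remember that $\psi(\vec{\overline{n}})$ is a sentence so that the box elimination rule is applicable there.
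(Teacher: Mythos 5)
Your proof is correct and follows essentially the same route as the paper's: induction on the construction of $\varphi$, with the box case handled by \textsc{Nec} for Clause 1 and by the box elimination rule for Clause 2, all other cases being immediate since both translations commute with the connectives and quantifiers. Your explicit strengthening to formulas with numeral substitutions is a reasonable way to make the quantifier steps precise; the paper leaves this implicit.
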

\begin{proof}
These statements are proved by induction on the construction of $\varphi$. 
We only prove the case of $\varphi \equiv \Box \psi$. 

1. If $\NS \models \pi_T(\Box \psi)$, then $\NS \models \PR_T(\gn{\psi})$. 
Then, $T \vdash \psi$. 
By the rule \textsc{Nec}, $T \vdash \Box \psi$. 
Then, $\NS \models \PR_T(\gn{\Box \psi})$, and hence $\NS \models \rho_T(\Box \psi)$. 

2. If $\NS \models \rho_T(\Box \psi)$, then $T \vdash \Box \psi$. 
By the box elimination rule, $T \vdash \psi$. 
Hence, $\NS \models \pi_T(\Box \psi)$. 
\end{proof}

We strengthen the usual $\Sigma_1$-completeness theorem of $\PA$ as follows. 

\begin{thm}[$\SB$-completeness theorem]\label{SBcompl2}
Let $T$ be any r.e.~extension of $\PA_\Box$. 
Then, for any $\SB$ sentence $\varphi$, if $\NS \models \rho_T(\varphi)$, then $T \vdash \varphi$. 
\end{thm}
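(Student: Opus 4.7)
The plan is to prove a slightly strengthened statement by induction on the construction of $\SB$ formulas: for any $\SB$ formula $\varphi(\vec{x})$ and any tuple $\vec{n}$ of natural numbers, if $\NS \models \rho_T(\varphi(\vec{x}))[\vec{x} := \vec{n}]$, then $T \vdash \varphi(\vec{\overline{n}})$. The clause defining $\rho_T(\Box \varphi(\vec{x}))$ uses the dot-term convention, so $\rho_T(\varphi(\vec{x}))[\vec{x} := \vec{n}]$ is literally $\rho_T(\varphi(\vec{\overline{n}}))$, which makes the induction go through cleanly.

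There are two base cases. If $\varphi$ is a $\Sigma_1$ formula, then $\rho_T(\varphi) \equiv \varphi$, and so $\NS \models \varphi(\vec{\overline{n}})$ implies $\PA \vdash \varphi(\vec{\overline{n}})$ by the ordinary $\Sigma_1$-completeness of $\PA$, and thus $T \vdash \varphi(\vec{\overline{n}})$ since $T$ extends $\PA_\Box$. If $\varphi(\vec{x}) \equiv \Box \psi(\vec{x})$, then $\rho_T(\varphi(\vec{x}))$ is $\PR_T(\gn{\Box \psi(\vec{\dot{x}})})$, so the hypothesis yields $\NS \models \PR_T(\gn{\Box \psi(\vec{\overline{n}})})$; by the weak representability of $T$-provability by $\PR_T$, this gives $T \vdash \Box \psi(\vec{\overline{n}})$, as required.

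The inductive steps are essentially routine because $\rho_T$ commutes with logical connectives and quantifiers. The cases $\psi \land \sigma$ and $\psi \lor \sigma$ follow immediately from the induction hypothesis. For $\exists x \psi(x, \vec{y})$, $\NS \models \exists x \rho_T(\psi(x, \vec{\overline{n}}))$ supplies a witness $k$, so the induction hypothesis yields $T \vdash \psi(\overline{k}, \vec{\overline{n}})$, and existential introduction in $T$ closes the case. For the bounded universal $\forall x < t \, \psi(x, \vec{y})$, where $t$ is an $\mathcal{L}_A$-term, we evaluate $t$ at $\vec{\overline{n}}$ to obtain some $m$; the induction hypothesis gives $T \vdash \psi(\overline{k}, \vec{\overline{n}})$ for every $k < m$, and the conclusion follows using the $\PA_\Box$-provable equivalence $\forall x < t\, \chi(x) \leftrightarrow \bigwedge_{k<m} \chi(\overline{k})$ when $t$ is numerically equal to $\overline{m}$.

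The only step that requires anything beyond routine bookkeeping is the $\B$ base case, where we invoke the external fact that $\PR_T$ is a weak representation of $T$-provability in the standard model; no internal reasoning in $T$ or use of $\PA(\KF)$ is needed (contrasting with Theorem \ref{SBcompl}). I therefore expect no real obstacle, provided the induction is set up on formulas with free variables rather than just sentences, so that the quantifier cases reduce correctly to instances of the hypothesis on $\psi(\overline{k}, \vec{\overline{n}})$.
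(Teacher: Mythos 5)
Your proposal is correct and follows essentially the same route as the paper: induction on the construction of $\SB$ formulas, with the $\Sigma_1$ base case handled by ordinary $\Sigma_1$-completeness of $\PA$, the $\B$ base case by the fact that $\PR_T$ correctly (weakly) represents $T$-provability in $\NS$, and routine inductive steps for the connectives and quantifiers. The paper merely states the inductive cases are "straightforward," whereas you spell them out (including the useful precaution of carrying free variables and numeral parameters through the induction), but there is no substantive difference.
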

\begin{proof}
We prove the theorem by induction on the construction of $\varphi$. 
\begin{itemize}
	\item If $\varphi$ is a $\Sigma_1$ sentence, then the statement immediately follows from the usual $\Sigma_1$-completeness of $\PA$ because $\rho_T(\varphi)$ is exactly $\varphi$. 
	\item If $\varphi$ is of the form $\Box \psi$, then $\NS \models \rho_T(\Box \psi)$ means $\NS \models \PR_T(\gn{\Box \psi})$, and hence $T \vdash \Box \psi$. 
	\item If $\varphi$ is one of the forms $\psi \land \sigma$, $\psi \lor \sigma$, $\exists x \psi$, and $\forall x < t \ \psi$, then the proof is straightforward by the induction hypothesis. 
\end{itemize}
\end{proof}

In the light of Proposition \ref{piVSrho} and Theorem \ref{SBcompl2}, we introduce the following two different types of the notion of $\SB$-soundness.

\begin{definition}
Let $T$ be any r.e. $\LB$-theory. 
\begin{itemize}
	\item $T$ is said to be \textit{$\SB$-sound} if for any $\SB$ sentence $\varphi$, if $T \vdash \varphi$, then $\NS \models \pi_T(\varphi)$; 
	\item $T$ is said to be \textit{weakly $\SB$-sound} if for any $\SB$ sentence $\varphi$, if $T \vdash \varphi$, then $\NS \models \rho_T(\varphi)$. 
\end{itemize}
\end{definition}

\begin{lem}\label{twoSBS}
For any r.e.~$\LB$-theory $T$, the following are equivalent: 
\begin{enumerate}
	\item $T$ is $\SB$-sound. 
	\item $T$ is weakly $\SB$-sound and $T$ is closed under the box elimination rule. 
\end{enumerate}
\end{lem}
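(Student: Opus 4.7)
The plan is to prove both directions by combining Proposition \ref{piVSrho} with a single application of each soundness definition to a $\B$-sentence, for the box-elimination closure part.

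For $(1 \Rightarrow 2)$, I would assume $T$ is $\SB$-sound and split into two sub-claims. First, weak $\SB$-soundness follows immediately: if $T \vdash \varphi$ for a $\SB$ sentence $\varphi$, then $\NS \models \pi_T(\varphi)$ by $\SB$-soundness, and then $\NS \models \rho_T(\varphi)$ by Proposition \ref{piVSrho}.1. Second, to show closure under the box elimination rule, suppose $T \vdash \Box \varphi$ for some $\LB$-sentence $\varphi$. Since $\Box \varphi$ is a $\B$ sentence and hence a $\SB$ sentence, $\SB$-soundness yields $\NS \models \pi_T(\Box \varphi)$, i.e., $\NS \models \PR_T(\gn{\varphi})$, which is exactly $T \vdash \varphi$.

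For $(2 \Rightarrow 1)$, I would assume $T$ is weakly $\SB$-sound and closed under box elimination. Let $\varphi$ be a $\SB$ sentence with $T \vdash \varphi$. Weak $\SB$-soundness gives $\NS \models \rho_T(\varphi)$, and then Proposition \ref{piVSrho}.2, which requires exactly the box elimination hypothesis, yields $\NS \models \pi_T(\varphi)$. Hence $T$ is $\SB$-sound.

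There is no real obstacle here; all the substantive work has already been done in Proposition \ref{piVSrho}, and the lemma is essentially the statement that the extra strength of $\pi_T$ over $\rho_T$ on $\SB$ sentences is equivalent to closure under the box elimination rule. The only point worth emphasizing in the write-up is that the box elimination closure is extracted from $\SB$-soundness by specializing to a $\B$ sentence $\Box \varphi$, which is why $\B \subseteq \SB$ is the relevant containment.
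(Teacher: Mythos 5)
Your proof is correct and follows essentially the same route as the paper's: both reduce everything to Proposition \ref{piVSrho} and then check that $\SB$-soundness yields closure under the box elimination rule by applying it to the $\SB$ sentence $\Box\varphi$, reading off $\NS \models \PR_T(\gn{\varphi})$ and hence $T \vdash \varphi$. The paper merely compresses the remaining bookkeeping into the phrase ``by Proposition \ref{piVSrho}, it suffices to show that $\SB$-soundness implies the box elimination rule,'' which is exactly the decomposition you spell out.
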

\begin{proof}
By Proposition \ref{piVSrho}, it suffices to show that $\SB$-soundness implies the box elimination rule. 
Suppose that $T$ is $\SB$-sound. 
Let $\varphi$ be any $\LB$-sentence such that $T \vdash \Box \varphi$. 
By the $\SB$-soundness of $T$, $\NS \models \pi_T(\Box \varphi)$ and hence $\NS \models \PR_T(\gn{\varphi})$. 
We obtain $T \vdash \varphi$. 
\end{proof}

We are ready to prove an analogue of Guaspari's theorem.

\begin{thm}\label{weakSBsound}\leavevmode
Let $T$ be an r.e.~extension of $\PA_\Box$. 
\begin{enumerate}
	\item If $T$ contains $\PA(\K)$, $T \nvdash \Box \bot$, and $T$ has $\SB$-$\DP$, then $T$ is weakly $\SB$-sound. 
	\item 	If $T$ is weakly $\SB$-sound, then $T$ has $\SB$-$\EP$. 
\end{enumerate}
\end{thm}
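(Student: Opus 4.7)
The plan for Clause 1 is to prove the statement $\NS \models \rho_T(\varphi)$ by induction on the construction of the $\SB$ sentence $\varphi$, using the hypothesis $T \vdash \varphi$ throughout. First I need the base case for $\Sigma_1$ sentences, which reduces to the claim that $T$ is $\Sigma_1$-sound. This follows by observing that $\SB$-$\DP$ trivially entails $(\SB, \Sigma_1)$-$\DP$, hence $(\B, \Sigma_1)$-$\DP$ since $\B \subseteq \SB$, and then invoking Proposition \ref{DPtoS1sound} (which applies because $T \supseteq \PA(\K)$ and $T \nvdash \Box \bot$). The case $\varphi \equiv \Box \psi$ is immediate from the definition $\rho_T(\Box \psi) \equiv \PR_T(\gn{\Box \psi})$, since $T \vdash \Box \psi$ directly witnesses this. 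The conjunction case is trivial.

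The remaining cases use exactly the strength of $\SB$-$\DP$. For $\varphi \equiv \psi \lor \sigma$, apply $\SB$-$\DP$ to split into $T \vdash \psi$ or $T \vdash \sigma$ and use the induction hypothesis. For $\varphi \equiv \exists x\, \psi(x)$, first note that by Theorem \ref{SBDP} (which applies since $T \supseteq \PA(\K)$, $T \nvdash \Box \bot$, and $T$ has $\SB$-$\DP$) the theory $T$ also enjoys $\SB$-$\EP$; this produces a numeral $\overline{n}$ with $T \vdash \psi(\overline{n})$, and the induction hypothesis yields $\NS \models \rho_T(\psi(\overline{n}))$, whence $\NS \models \exists x\, \rho_T(\psi(x)) \equiv \rho_T(\varphi)$. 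For $\varphi \equiv \forall x < t\, \psi(x)$, the term $t$ is closed since $\varphi$ is a sentence, so $T \vdash \psi(\overline{n})$ for each $n$ below the value of $t$, and the induction hypothesis suffices.

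Clause 2 is essentially a one-line derivation from the $\SB$-completeness theorem. Given an $\SB$ formula $\varphi(x)$ with at most $x$ free such that $T \vdash \exists x\, \varphi(x)$, the sentence $\exists x\, \varphi(x)$ lies in $\SB$, so weak $\SB$-soundness gives $\NS \models \rho_T(\exists x\, \varphi(x))$. Since $\rho_T$ commutes with the existential quantifier, there exists a natural number $n$ with $\NS \models \rho_T(\varphi(\overline{n}))$. Theorem \ref{SBcompl2} then yields $T \vdash \varphi(\overline{n})$.

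The main obstacle is really just identifying the right earlier results to invoke, rather than any substantive new argument: the $\Sigma_1$-soundness needed for the base case of Clause 1 travels through Proposition \ref{DPtoS1sound}, and the existential step depends essentially on the nontrivial equivalence $\SB$-$\DP \Leftrightarrow \SB$-$\EP$ established in Theorem \ref{SBDP}. Once those are in hand, the induction and the use of Theorem \ref{SBcompl2} in Clause 2 are routine.
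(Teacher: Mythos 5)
Your proposal is correct and follows essentially the same route as the paper: the same induction on the structure of the $\SB$ sentence for Clause 1, deriving $\Sigma_1$-soundness from Proposition \ref{DPtoS1sound} for the base case, using $\SB$-$\DP$ for disjunctions and Theorem \ref{SBDP} (to upgrade to $\SB$-$\EP$) for existentials, and the same one-line appeal to Theorem \ref{SBcompl2} for Clause 2. The only difference is that you spell out the bounded-universal case (reducing to the finitely many instances $\psi(\overline{n})$) where the paper simply calls it straightforward.
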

\begin{proof}
1. We prove by induction on the construction of $\varphi$ that for any $\SB$ sentence $\varphi$, if $T \vdash \varphi$, then $\NS \models \rho_T(\varphi)$. 

\begin{itemize}
	\item If $\varphi$ is a $\Sigma_1$ sentence, then $\rho_T(\varphi) \equiv \varphi$. 
	Suppose $T \vdash \varphi$. 
	Since $\SB$-$\DP$ implies $(\B, \Sigma_1)$-$\DP$, $T$ is $\Sigma_1$-sound by Proposition \ref{DPtoS1sound}. 
	Therefore, $\NS \models \rho_T(\varphi)$. 

	\item If $\varphi$ is of the form $\Box \psi$, then $\rho_T(\varphi) \equiv \PR_T(\gn{\Box \psi})$. 
	Suppose $T \vdash \Box \psi$. 
	Then, obviously $\NS \models \rho_T(\varphi)$. 

	\item If $\varphi$ is of the form $\psi \land \sigma$ or $\forall x < t\, \psi$, then the proof is straightforward from the induction hypothesis. 

	\item If $\varphi$ is $\psi \lor \sigma$, then $\rho_T(\varphi) \equiv \rho_T(\psi) \lor \rho_T(\sigma)$. 
	Suppose $T \vdash \psi \lor \sigma$. 
	Then, by $\SB$-$\DP$, $T \vdash \psi$ or $T \vdash \sigma$. 
	By the induction hypothesis, $\NS \models \rho_T(\psi)$ or $\NS \models \rho_T(\sigma)$. 
	Hence, $\NS \models \rho_T(\varphi)$.

	\item If $\varphi$ is $\exists x \psi(x)$, then $\rho_T(\varphi) \equiv \exists x \rho_T(\psi(x))$. 
	Suppose $T \vdash \exists x \psi(x)$. 
	Since $T \nvdash \Box \bot$, by Theorem \ref{SBDP}, $T$ has $\SB$-$\EP$. 
	Then, there exists a natural number $n$ such that $T \vdash \psi(\overline{n})$. 
	By the induction hypothesis, $\NS \models \rho_T(\psi(\overline{n}))$. 
	Therefore, $\NS \models \rho_T(\varphi)$. 
\end{itemize}

2. Let $\varphi(x)$ be any $\SB$ formula having no free variables except $x$ such  that $T \vdash \exists x \varphi(x)$. 
By the weak $\SB$-soundness of $T$, $\NS \models \rho_T(\exists x \varphi(x))$. 
Then, for some natural number $n$, $\NS \models \rho_T(\varphi(\overline{n}))$. 
By Theorem \ref{SBcompl2}, $T \vdash \varphi(\overline{n})$. 
\end{proof}

\begin{cor}\label{SBsound}
Let $T$ be an r.e.~extension of $\PA_\Box$. 
\begin{enumerate}
	\item If $T$ contains $\PA(\KF)$, $T$ is consistent, and $T$ has $\MDP$, then $T$ is $\SB$-sound. 
	\item 	If $T$ is $\SB$-sound, then $T$ has $\MEP$. 
\end{enumerate}
\end{cor}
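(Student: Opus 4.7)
The plan is to derive both clauses as quick combinations of Theorem \ref{weakSBsound}, Lemma \ref{twoSBS}, Proposition \ref{DPEPb}, and Proposition \ref{MDPtoSBDP}, without any further technical work. The key observation is that $\MDP$ already packages together two ingredients: $\B$-$\DP$ (and hence $\SB$-$\DP$ over $\PA(\KF)$) and closure under the box elimination rule, which are exactly what Lemma \ref{twoSBS} asks for in order to upgrade weak $\SB$-soundness to $\SB$-soundness.

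For Clause 1, assume $T \supseteq \PA(\KF)$ is consistent and has $\MDP$. First I would invoke Proposition \ref{DPEPb}.1 to conclude that $T$ is closed under the box elimination rule and has $\B$-$\DP$. Together with consistency, closure under box elimination rules out $T \vdash \Box \bot$, since otherwise $T \vdash \bot$. Next, Proposition \ref{MDPtoSBDP}.1 gives $\SB$-$\DP$. Since $T$ extends $\PA(\KF) \supseteq \PA(\K)$ and $T \nvdash \Box \bot$, Theorem \ref{weakSBsound}.1 applies and yields weak $\SB$-soundness. Finally, Lemma \ref{twoSBS} combines weak $\SB$-soundness with box elimination to give full $\SB$-soundness.

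For Clause 2, assume $T$ is $\SB$-sound. Lemma \ref{twoSBS} tells me that $T$ is weakly $\SB$-sound and closed under the box elimination rule. Theorem \ref{weakSBsound}.2 then delivers $\SB$-$\EP$. Since every $\B$-formula is an $\SB$-formula, $T$ in particular has $\B$-$\EP$. Invoking Proposition \ref{DPEPb}.2 together with closure under box elimination, I conclude $\MEP$.

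No step is delicate; the whole proof is a bookkeeping exercise, and the only thing to watch is that for Clause 1 one really needs consistency to pass from closure under box elimination to $T \nvdash \Box \bot$, and one needs $T \supseteq \PA(\KF)$ (not merely $\PA(\K)$) to apply Proposition \ref{MDPtoSBDP} and get $\SB$-$\DP$ from $\MDP$.
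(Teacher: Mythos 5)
Your proof is correct and follows essentially the same route as the paper: Proposition \ref{MDPtoSBDP} plus Proposition \ref{DPEPb} to get $\SB$-$\DP$, box elimination, and $T \nvdash \Box \bot$, then Theorem \ref{weakSBsound} and Lemma \ref{twoSBS} for Clause 1, and the reverse chain for Clause 2. The only (welcome) extra care is your explicit note that $\SB$-$\EP$ implies $\B$-$\EP$ because $\B \subseteq \SB$ before invoking Proposition \ref{DPEPb}.2, a step the paper leaves implicit.
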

\begin{proof}
1. Since $T$ contains $\PA(\KF)$ and $T$ has $\MDP$, by Proposition \ref{MDPtoSBDP}, $T$ has $\SB$-$\DP$. 
Also, $T$ is closed under the box elimination rule by Proposition \ref{DPEPb}. 
Then, $T \nvdash \Box \bot$ by the consistency of $T$. 
By Theorem \ref{weakSBsound}.1, $T$ is weakly $\SB$-sound. 
By Lemma \ref{twoSBS}, $T$ is $\SB$-sound. 

2. By Lemma \ref{twoSBS}, $T$ is weakly $\SB$-sound and is closed under the box elimination rule. 
By Theorem \ref{weakSBsound}.2, $T$ has $\SB$-$\EP$. 
Therefore, by Proposition \ref{DPEPb}, $T$ has $\MEP$. 
\end{proof}

Since the $\LB$-soundness implies the $\SB$-soundness, we obtain the following corollary from Propositions \ref{ax-soundness1} and \ref{ax-soundness2}. 

\begin{cor}\label{MDP}\leavevmode
\begin{enumerate}
	\item Let $T$ be an $\LB$-theory obtained by adding some axioms of $\PA(\GL)$ into $\PA_\Box$, and let $U$ be any r.e.~extension of $T$. 
If $\NS \models \pi_U(\varphi)$ for all $\varphi \in U \setminus T$, then $U$ has $\MEP$. 
	\item Let $T$ be an $\LB$-theory obtained by adding some axioms of $\PA(\SF)$ into $\PA_\Box$, and let $U$ be any r.e.~extension of $T$. 
If $\NS \models \pi'_U(\varphi)$ for all $\varphi \in U \setminus T$, then $U$ has $\MEP$. 
\end{enumerate}
In particular, $\PA_\Box$, $\PA(\K)$, $\PA(\KT)$, $\PA(\KF)$, $\PA(\SF)$, and $\PA(\GL)$ have $\MEP$. 
\end{cor}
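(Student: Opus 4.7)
The plan is to chain together the $\LB$-soundness criteria of Propositions \ref{ax-soundness1} and \ref{ax-soundness2} with the $\SB$-soundness characterization of $\MEP$ given by Corollary \ref{SBsound}.2. The key observation is that $\LB$-soundness trivially implies $\SB$-soundness, since every $\SB$ sentence is an $\LB$ sentence and the definition of $\SB$-soundness uses the same translation $\pi_T$ as $\LB$-soundness.

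For Clause 1, I would argue as follows. Given the hypothesis $\NS \models \pi_U(\varphi)$ for every $\varphi \in U \setminus T$, Proposition \ref{ax-soundness1} yields that $U$ is $\LB$-sound. Restricting to $\SB$ sentences, $U$ is in particular $\SB$-sound. Then Corollary \ref{SBsound}.2 gives $\MEP$ for $U$. For Clause 2, the same strategy applies, but starting from Proposition \ref{ax-soundness2} to obtain the alternative $\LB$-soundness of $U$; Proposition \ref{soundness} then promotes this to genuine $\LB$-soundness, whence $\SB$-soundness and finally $\MEP$ via Corollary \ref{SBsound}.2.

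For the ``in particular'' clause, I would specialize each named theory to the case $U = T$. For $\PA_\Box$, $\PA(\K)$, $\PA(\KF)$, and $\PA(\GL)$, the axioms are all among those of $\PA(\GL)$, so Clause 1 applies with $U \setminus T = \emptyset$, making the hypothesis vacuously true. For $\PA(\KT)$ and $\PA(\SF)$, which contain the reflection axiom $\Box \varphi \to \varphi$ not available in $\PA(\GL)$, Clause 2 applies with $T = U$ and $U \setminus T = \emptyset$, again vacuously.

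There is no real obstacle here; the corollary is essentially a packaging of already-proved results, and the only care required is ensuring that each named theory is routed through the correct clause (reflection-free theories through Clause 1, theories with the T axiom through Clause 2). The small subtlety worth flagging is that Corollary \ref{SBsound}.2 delivers $\MEP$ from $\SB$-soundness without any consistency assumption, because $\SB$-soundness already entails closure under the box elimination rule via Lemma \ref{twoSBS}, so no separate check of $U \nvdash \Box \bot$ is needed.
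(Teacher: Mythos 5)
Your proposal is correct and follows exactly the route the paper intends: Propositions \ref{ax-soundness1} and \ref{ax-soundness2} give $\LB$-soundness, which trivially restricts to $\SB$-soundness, and Corollary \ref{SBsound}.2 then yields $\MEP$, with the named theories handled vacuously via $U = T$. The only cosmetic difference is that Proposition \ref{ax-soundness2} as stated already concludes genuine $\LB$-soundness (the promotion via Proposition \ref{soundness} is internal to its proof), so your explicit extra step there is harmless but unnecessary.
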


By Lemma \ref{twoSBS}, each $\SB$-sound theory is also weakly $\SB$-sound. 
Therefore, $\PA_\Box$, $\PA(\K)$, $\PA(\KT)$, $\PA(\KF)$, $\PA(\SF)$, and $\PA(\GL)$ also have $\SB$-$\EP$. 
Recall that $\PA(\Ver)$ also has $\SB$-$\EP$ (Corollary \ref{VerEP}). 

Here we give another sufficient condition for a theory to have $\SB$-$\EP$.
First, we prove an analogue of Proposition \ref{pi} with respect to $\rho$-translations. 

\begin{prop}\label{rho}
Let $T$ be any $\LB$-theory obtained by adding some axioms of the form $\forall \vec{x}(\Box \psi_0 \land \cdots \land \Box \psi_{k-1} \to \Box \psi_k)$ into $\PA_\Box$. 
Then, for any $\LB$-formula $\varphi$, if $T \vdash \varphi$, then for any r.e.~extension $U$ of $T$, $\PA \vdash \rho_U(\varphi)$. 
\end{prop}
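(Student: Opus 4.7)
The plan is to mimic the strategy of Proposition \ref{pi}, carrying out an induction on the length of a proof of $\varphi$ in $T$ and verifying that $\rho_U$ sends each step of such a proof to a $\PA$-provable formula. Fix an arbitrary r.e.~extension $U$ of $T$. Throughout the induction, write $\vec{x}$ for the free variables of the formula under consideration and remember that $\rho_U$ commutes with all Boolean connectives and quantifiers.

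For the base step I would dispatch the standard cases first. Logical axioms and the axioms of $\PA$ translate under $\rho_U$ to the same formulas (or to sentences whose outer $\Box$'s are replaced by $\PR_U$-expressions in inert positions), and each such translated sentence is a logical axiom or a $\PA$-provable sentence. The induction axiom for an $\LB$-formula $\varphi(y,\vec{x})$ translates to the induction axiom for the $\mathcal{L}_A$-formula $\rho_U(\varphi)(y,\vec{x})$, which is a $\PA$-axiom. The only genuinely new base case is an axiom of the form $\forall \vec{x}(\Box \psi_0 \land \cdots \land \Box \psi_{k-1} \to \Box \psi_k)$. Here $\rho_U$ turns this into
\[
    \forall \vec{x}\bigl(\PR_U(\gn{\Box \psi_0(\vec{\dot{x}})}) \land \cdots \land \PR_U(\gn{\Box \psi_{k-1}(\vec{\dot{x}})}) \to \PR_U(\gn{\Box \psi_k(\vec{\dot{x}})})\bigr).
\]
Because the unquantified formula is an axiom of $T$ and $U$ extends $T$, $\PA$ proves $\forall \vec{x}\,\PR_U(\gn{\Box \psi_0(\vec{\dot{x}}) \land \cdots \land \Box \psi_{k-1}(\vec{\dot{x}}) \to \Box \psi_k(\vec{\dot{x}})})$; combining this with the formalized distributivity of $\PR_U$ over implication and conjunction yields the required sentence.

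The inductive step handles the three rules. For MP, if $\varphi$ is derived from $\psi$ and $\psi\to\varphi$, the induction hypothesis gives $\PA \vdash \rho_U(\psi)$ and $\PA \vdash \rho_U(\psi) \to \rho_U(\varphi)$, so $\PA \vdash \rho_U(\varphi)$. The case of \textsc{Gen} is immediate since $\rho_U$ commutes with $\forall$. For \textsc{Nec}, if $\varphi \equiv \Box \psi$ is obtained from $\psi$, then $T \vdash \Box \psi$ (apply \textsc{Nec} once in $T$), hence $U \vdash \Box \psi(\vec{x})$, and the standard formalization of provability gives $\PA \vdash \forall \vec{x}\,\PR_U(\gn{\Box \psi(\vec{\dot{x}})})$, which is exactly $\rho_U(\Box \psi)$.

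The only subtlety I anticipate is the bookkeeping for free variables, in particular in the \textsc{Nec} case and in the axioms of the new scheme, where one must ensure that the numeral-substitution dot notation inside $\gn{\cdot}$ is correctly respected when applying formalized deduction inside $\PA$. Once one is comfortable that $\PA$ proves the basic facts about $\PR_U$ (closure under \textsc{Nec}, distribution over $\to$ and $\land$), this reduces to a routine verification, and the result follows.
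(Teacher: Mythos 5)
Your proposal is correct and follows essentially the same route as the paper: induction on the length of the $T$-proof, with the new axiom scheme handled by $\PA$-provable closure of $U$ under the axioms of $T$ plus formalized distribution of $\PR_U$ over $\to$ and $\land$, and \textsc{Nec} handled by noting $U \vdash \Box\psi(\vec{x})$ so $\PA \vdash \PR_U(\gn{\Box\psi(\vec{\dot{x}})})$, which is exactly $\rho_U(\Box\psi(\vec{x}))$. The remaining cases are dispatched just as in Proposition \ref{alpha}, as the paper also does.
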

\begin{proof}
Let $U$ be any r.e.~extension of $T$. 
As in the proof of Proposition \ref{alpha}, we prove by induction on the length of proofs of $\varphi$ in $T$ that for any $\LB$-formula $\varphi$, if $T \vdash \varphi$, then $\PA \vdash \rho_U(\varphi)$. 
We only give proofs of the following two cases. 

\begin{itemize}
	\item The case $\varphi \equiv \forall \vec{x} \bigl(\Box \psi_0(\vec{x}) \land \cdots \land \Box \psi_{k-1}(\vec{x}) \to \Box \psi_k(\vec{x}) \bigr)$: 
Since $U$ is an extension of $T$, we have $\PA \vdash \forall \vec{x} \, \PR_U(\gn{\Box \psi_0(\vec{\dot{x}}) \land \cdots \land \Box \psi_{k-1}(\vec{\dot{x}}) \to \Box \psi_k(\vec{\dot{x}})})$, and hence $\PA$ proves
\[
	\forall \vec{x} \bigl(\PR_U(\gn{\Box \psi_0(\vec{\dot{x}})}) \land \cdots \land \PR_U(\gn{\Box \psi_{k-1}(\vec{\dot{x}})}) \to \PR_U(\gn{\Box \psi_k(\vec{\dot{x}})}) \bigr).
\] 
This sentence is exactly $\rho_U(\varphi)$. 
	\item If $\varphi(\vec{x})$ is derived from $\psi(\vec{x})$ by \textsc{Nec}, then $\varphi(\vec{x}) \equiv \Box \psi(\vec{x})$. 
	Since $T \vdash \Box \psi(\vec{x})$, $U \vdash \Box \psi(\vec{x})$, and hence $\PA \vdash \PR_U(\gn{\Box \psi(\vec{\dot{x}})})$. 
	Thus, $\PA \vdash \rho_U(\varphi(\vec{x}))$. 
\end{itemize}
\end{proof}

\begin{cor}\label{SBEP}
Let $T$ be any $\LB$-theory obtained by adding some axioms of the form $\forall \vec{x}(\Box \psi_0 \land \cdots \land \Box \psi_{k-1} \to \Box \psi_k)$ into $\PA_\Box$, and let $U$ be any r.e.~extension of $T$. 
If $\NS \models \rho_U(\varphi)$ for all $\varphi \in U \setminus T$, then $U$ has $\SB$-$\EP$. 
\end{cor}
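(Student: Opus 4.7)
The plan is to mirror the proof of Proposition \ref{ax-soundness1}, but replacing $\pi_U$ with $\rho_U$ and invoking Proposition \ref{rho} in place of Proposition \ref{pi}. Concretely, I would first establish that $U$ is weakly $\SB$-sound, and then appeal to Theorem \ref{weakSBsound}.2 to conclude that $U$ has $\SB$-$\EP$. Note that weak $\SB$-soundness follows if I show the stronger statement that for every $\LB$-formula $\varphi$, if $U \vdash \varphi$, then $\NS \models \rho_U(\forall \vec{x}\, \varphi)$, where $\vec{x}$ is the list of free variables.

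To prove this stronger statement, I would proceed by induction on the length of a $U$-proof of $\varphi$. The cases break up as follows. If $\varphi$ is an axiom of $T$ or a logical axiom, then by Proposition \ref{rho}, $\PA \vdash \rho_U(\forall \vec{x}\, \varphi)$, and hence $\NS \models \rho_U(\forall \vec{x}\, \varphi)$. If $\varphi \in U \setminus T$, then $\NS \models \rho_U(\varphi)$ by hypothesis. The cases of MP and \textsc{Gen} are routine from the definition of $\rho_U$ and the induction hypothesis. The \textsc{Nec} case is where one must be slightly careful: if $\varphi \equiv \Box \psi(\vec{x})$ is inferred from $\psi(\vec{x})$, then since $U \vdash \psi(\vec{x})$, we have $U \vdash \Box \psi(\vec{x})$ by \textsc{Nec}, so $\PA \vdash \forall \vec{x}\, \PR_U(\gn{\Box \psi(\vec{\dot{x}})})$, which is exactly $\rho_U(\forall \vec{x}\, \Box \psi(\vec{x}))$.

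Once weak $\SB$-soundness of $U$ is in hand, the conclusion is immediate: by Theorem \ref{weakSBsound}.2, $U$ has $\SB$-$\EP$.

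The only potentially delicate point is the \textsc{Nec} case, since one must verify that $\rho_U$ behaves correctly under the passage from $\psi$ to $\Box \psi$; the essential fact is that $\rho_U(\Box\psi)$ is $\PR_U(\gn{\Box \psi})$ rather than $\PR_U(\gn{\psi})$, so closure of $U$ under \textsc{Nec} directly gives what we need, without needing closure under the box elimination rule (which is exactly why the conclusion is $\SB$-$\EP$ rather than $\MEP$). Beyond this, the argument is a straightforward transcription of the proof of Proposition \ref{ax-soundness1}.
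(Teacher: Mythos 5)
Your proposal is correct and follows exactly the paper's own route: the paper likewise deduces weak $\SB$-soundness of $U$ ``as in the proof of Proposition \ref{ax-soundness1}'' using Proposition \ref{rho}, and then concludes $\SB$-$\EP$ from Theorem \ref{weakSBsound}.2. The extra care you take with the \textsc{Nec} case (that $\rho_U(\Box\psi)$ is $\PR_U(\gn{\Box\psi})$, so necessitation suffices without box elimination) is exactly the point the paper relies on implicitly.
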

\begin{proof}
Suppose that $\NS \models \rho_U(\varphi)$ for all $\varphi \in U \setminus T$. 
As in the proof of Proposition \ref{ax-soundness1}, it follows from Proposition \ref{rho} that $U$ is weakly $\SB$-sound. 
Then, by Theorem \ref{weakSBsound}, $U$ has $\SB$-$\EP$.
\end{proof}

\subsection{Applications}

In this subsection, as applications of our results we have obtained so far, we show two non-implications between the properties. 
Corollary \ref{VerEP} shows that in general, $\SB$-$\DP$ does not imply $\MDP$. 
The first application shows that this is also true for theories that do not contain $\PA(\Ver)$.

\begin{prop}\label{nclosed}\leavevmode
\begin{enumerate}
	\item There exists an r.e.~theory $T$ such that $\PA(\SF) \vdash T \vdash \PA(\KF)$, $T \nvdash \Box \bot$, $T$ has $\SB$-$\EP$, and $T$ does not have $\MDP$; 
	\item There exists an r.e.~theory $T$ such that $\PA(\Ver) \vdash T \vdash \PA(\GL)$, $T \nvdash \Box \bot$, $T$ has $\SB$-$\EP$, and $T$ does not have $\MDP$. 
\end{enumerate}
\end{prop}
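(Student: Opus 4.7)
The plan is to exhibit explicit r.e.~witnesses obtained by adjoining a single $\Box$-axiom to the lower theory in each clause. For (1), I take $T := \PA(\KF) + \{\Box \neg \Box \bot\}$; for (2), $T := \PA(\GL) + \{\Box \Box \bot\}$. The bracketing containments are immediate: in (1), $\PA(\SF) \vdash \neg \Box \bot$ from the axiom $\Box \bot \to \bot$, so $\PA(\SF) \vdash \Box \neg \Box \bot$ by \textsc{Nec}; in (2), $\PA(\Ver)$ proves every $\Box \psi$ (from $\Box \bot$ via K and \textsc{Nec}), which subsumes both the Löb scheme of $\PA(\GL)$ and the added axiom $\Box \Box \bot$.

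For $\SB$-$\EP$, I invoke Corollary \ref{SBEP} with base theory $\PA(\KF)$ or $\PA(\GL)$ (whose axioms have the required Boolean-combination-of-boxes shape). The only verification needed is $\NS \models \rho_T(\Box \varphi)$ for the new axiom $\Box \varphi$, which reduces to $\PR_T(\gn{\Box \varphi})$; this holds by $\Sigma_1$-completeness since $\Box \varphi$ is a $T$-axiom.

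The core step is $T \nvdash \Box \bot$, from which failure of $\MDP$ follows via Proposition \ref{DPEPb} by exhibiting a violation of the box elimination rule. In case (1), $T \nvdash \Box \bot$ is immediate from $T \subseteq \PA(\SF)$ and the consistency of $\PA(\SF)$ (a subtheory of the conservative extension $\PA(\Triv)$, cf.~Corollary \ref{Triv}). The failure of box elimination is witnessed by $\Box \neg \Box \bot$: if $T$ proved $\neg \Box \bot$, then because $\PA(\Ver) \vdash T$ (as $\PA(\Ver)$ proves every $\Box \psi$), $\PA(\Ver)$ would prove $\neg \Box \bot$, conflicting with the axiom $\Box \bot$ and the consistency of $\PA(\Ver)$ (Corollary \ref{Ver}).

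Case (2) requires genuine work for $T \nvdash \Box \bot$, since $\PA(\Ver)$ itself proves $\Box \bot$ so the direct containment trick is unavailable. Assuming $T \vdash \Box \bot$ toward a contradiction, the $\SB$-deduction theorem (Corollary \ref{SBDT}, applicable since $\PA(\GL) \supseteq \PA(\KF)$ and $\Box \Box \bot \in \SB$) yields $\PA(\GL) \vdash \Box \Box \bot \to \Box \bot$; Proposition \ref{pi} applied with the r.e.~extension $\PA(\Ver) \supseteq \PA(\GL)$ then gives $\PA \vdash \PR_{\PA(\Ver)}(\gn{\Box \bot}) \to \PR_{\PA(\Ver)}(\gn{\bot})$. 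Since $\Box \bot$ is an axiom of $\PA(\Ver)$, the antecedent is $\PA$-provable by $\Sigma_1$-completeness, so $\PA \vdash \neg \Con_{\PA(\Ver)}$, contradicting the consistency of $\PA(\Ver)$ (Corollary \ref{Ver}) via the $\mathcal{L}_A$-soundness of $\PA$. Failure of $\MDP$ in (2) then follows from $T \vdash \Box \Box \bot$ together with $T \nvdash \Box \bot$ via Proposition \ref{DPEPb}. This last argument is the main obstacle: one cannot use containment in the upper theory directly, and must instead combine the $\SB$-deduction theorem with the $\pi$-translation soundness of $\PA(\GL)$ to reduce the question to the consistency of $\PA(\Ver)$ formalized inside $\PA$.
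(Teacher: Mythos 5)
Your proposal is correct, uses the same two witness theories as the paper ($\PA(\KF)+\{\Box\neg\Box\bot\}$ and $\PA(\GL)+\{\Box\Box\bot\}$), and handles $\SB$-$\EP$ identically via Corollary \ref{SBEP}; the difference lies in how you derive the two key non-provability facts. In clause 1 the paper first applies the $\SB$-deduction theorem to get $\PA(\KF) \vdash \Box\neg\Box\bot \to \neg\Box\bot$ and then evaluates this in $\PA(\Ver)$, whereas you skip the deduction theorem and simply observe $\PA(\Ver) \vdash T$, so $T \vdash \neg\Box\bot$ would make $\PA(\Ver)$ inconsistent --- a small but genuine simplification. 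In clause 2 both proofs start from the $\SB$-deduction theorem's conclusion $\PA(\GL) \vdash \Box\Box\bot \to \Box\bot$, but the paper stays inside $\PA(\GL)$: it applies \textsc{Nec} and the L\"ob axiom to get $\PA(\GL) \vdash \Box\Box\bot$ and then uses $\MDP$ of $\PA(\GL)$ (Corollary \ref{MDP}) twice to reach $\PA(\GL) \vdash \bot$. You instead push the implication through $\pi_{\PA(\Ver)}$ via Proposition \ref{pi}, obtaining $\PA \vdash \PR_{\PA(\Ver)}(\gn{\Box\bot}) \to \PR_{\PA(\Ver)}(\gn{\bot})$, note the antecedent is $\PA$-provable since $\Box\bot$ is a $\PA(\Ver)$-axiom, and contradict the consistency of $\PA(\Ver)$ by the soundness of $\PA$. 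Your route trades the L\"ob axiom and the (nontrivial) $\MDP$ of $\PA(\GL)$ for the arithmetical soundness machinery of Proposition \ref{pi}; both inputs are established earlier in the paper, so neither argument is circular, and which is ``cheaper'' is a matter of taste --- yours has the mild advantage of working uniformly for any base theory covered by Proposition \ref{pi} rather than relying on the L\"ob axiom specifically. (One cosmetic remark: verifying $\NS \models \rho_T(\Box\varphi)$ for the added axiom needs only that $\Box\varphi$ is a $T$-axiom, i.e.\ truth of $\PR_T(\gn{\Box\varphi})$ in $\NS$; invoking $\Sigma_1$-completeness there is unnecessary.)
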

\begin{proof}
1. Let $T$ be the theory $\PA(\KF) + \{\Box \neg \Box \bot\}$. 
Since $T$ is a subtheory of $\PA(\Triv)$, we have $T \nvdash \Box \bot$. 
By Corollary \ref{SBEP}, $T$ has $\SB$-$\EP$. 
Suppose, towards a contradiction, that $T \vdash \neg \Box \bot$. 
Then, by the $\SB$-deduction theorem (Corollary \ref{SBDT}), $\PA(\KF) \vdash \Box \neg \Box \bot \to \neg \Box \bot$. 
Then, $\PA(\Ver) \vdash \Box \neg \Box \bot \to \neg \Box \bot$. 
Since $\PA(\Ver) \vdash \Box \neg \Box \bot \land \Box \bot$, this contradicts the consistency of $\PA(\Ver)$. 
Therefore, $T \nvdash \neg \Box \bot$. 
Since $T \vdash \Box \neg \Box \bot$, $T$ does not have $\MDP$. 

2. Let $T: = \PA(\GL) + \{\Box \Box \bot\}$. 
By Corollary \ref{SBEP}, $T$ has $\SB$-$\EP$. 
Suppose, towards a contradiction, that $T$ proves $\Box \bot$. 
By the $\SB$-deduction theorem, $\PA(\GL) \vdash \Box \Box \bot \to \Box \bot$. 
Then, $\PA(\GL) \vdash \Box(\Box \Box \bot \to \Box \bot)$ and hence $\PA(\GL) \vdash \Box \Box \bot$. 
By $\MDP$ of $\PA(\GL)$ (Corollary \ref{MDP} and Proposition \ref{EPtoDP}.1), $\PA(\GL) \vdash \bot$. 
This is a contradiction. 
Therefore, $T \nvdash \Box \bot$. 
Since $T \vdash \Box \Box \bot$, $T$ does not have $\MDP$. 
\end{proof}

Unlike the notion of the soundness of $\mathcal{L}_A$-theories, Proposition \ref{nclosed}.1 shows that the $\LB$-soundness is not preserved by taking a subtheory because $\PA(\SF)$ is $\LB$-sound but $T$ is not $\SB$-sound.

The second application shows that $\SB$-$\DC$ does not imply $\B$-$\DP$ in general.

\begin{prop}\label{ex2}
There exists a consistent r.e.~extension $T$ of $\PA(\KF)$ satisfying the following two conditions: 
\begin{enumerate}
	\item $T$ is $\SB$-$\DC$; 
	\item $T$ does not have $\B$-$\DP$. 
\end{enumerate}
\end{prop}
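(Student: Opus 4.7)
The plan is to construct $T$ as $\PA(\KF)$ extended by a single modal disjunctive axiom $\Box\alpha\lor\Box\beta$, where $\alpha,\beta$ are $\Sigma_1$ (hence $\SB$) sentences crafted via a simultaneous Rosser-style fixed-point construction applied to the r.e.~enumeration of $T$ itself (using Lemma \ref{FPL}, applied to the uniformly enumerable family of axioms parametrized by $\alpha,\beta$). The intended shape is that $\alpha$ asserts that some $T$-proof of $\Box\alpha$ occurs strictly before any $T$-proof of $\Box\beta$, and $\beta$ asserts the symmetric statement with the strict/non-strict comparison reversed. This makes the added disjunctive axiom ``genuinely disjunctive,'' preventing either of $\Box\alpha,\Box\beta$ from becoming separately $T$-provable while being weak enough not to introduce new $\SB\lor\Sigma_1$ theorems beyond what $\PA(\KF)$ alone decides.

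Consistency of $T$ and $T\nvdash\Box\bot$ are obtained via the translations of Section \ref{Sec:MA}. Consistency is immediate from the $\beta$-translation, since $\beta(\Box\alpha\lor\Box\beta)\equiv 0=0$, so $T$ sits inside the consistent theory $\PA(\Ver)$ (Corollary \ref{Ver}). For $T\nvdash\Box\bot$: if $T\vdash\Box\bot$, then by the $\SB$-deduction theorem (Corollary \ref{SBDT}) one obtains $\PA(\KF)\vdash\Box\alpha\to\Box\bot$ and $\PA(\KF)\vdash\Box\beta\to\Box\bot$, which by Proposition \ref{alpha} yield $\PA\vdash\neg\alpha\land\neg\beta$, contradicting the $\PA$-independence of the Rosser-style fixed-point sentences $\alpha,\beta$. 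Failure of $\B$-$\DP$ at the witness pair $(\alpha,\beta)$ then follows by the classical Rosser argument: a minimal $T$-proof of, say, $\Box\alpha$, combined with Theorem \ref{SBcompl} and the defining fixed-point equation of $\beta$, forces a strictly earlier $T$-proof of $\Box\beta$, contradicting minimality.

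The main obstacle is verifying $\SB$-$\DC$, equivalently $(\SB,\Sigma_1)$-$\DP$ by Corollary \ref{SBDPDC}. Suppose $T\vdash\chi\lor\sigma$ with $\chi\in\SB$ and $\sigma\in\Sigma_1$. By Corollary \ref{SBDT} this factors into $\PA(\KF)\vdash\Box\alpha\to(\chi\lor\sigma)$ and $\PA(\KF)\vdash\Box\beta\to(\chi\lor\sigma)$, from which we must conclude $T\vdash\chi$ or $T\vdash\sigma$. The delicate step is ruling out the ``mixed'' scenarios in which, say, $\Box\alpha$ forces $\chi$ alone while $\Box\beta$ forces $\sigma$ alone (or the symmetric configuration). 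Here one invokes an auxiliary Rosser-style diagonalization in the spirit of the proof of Lemma \ref{DPtoSDP}, designed so that any such mixed configuration would produce a fresh $T$-proof of $\Box\alpha$ or $\Box\beta$, contradicting the failure of $\B$-$\DP$ established in the previous step. The remaining ``uniform'' configurations---both branches producing $\chi$, or both producing $\sigma$---yield $T\vdash\chi$ or $T\vdash\sigma$ directly. Orchestrating these two intertwined Rosser constructions, one calibrated to break $\B$-$\DP$ at $(\alpha,\beta)$ and one to preserve $(\SB,\Sigma_1)$-$\DP$, is the technical heart of the argument.
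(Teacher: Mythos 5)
Your overall shape --- extend $\PA(\KF)$ by a single ``genuinely disjunctive'' axiom $\Box\alpha\lor\Box\beta$ with neither disjunct provable --- matches the paper's construction, but the step you defer as ``the technical heart'' is exactly where the proposal has a genuine gap. To verify $\SB$-$\DC$ you would need your one fixed pair $(\alpha,\beta)$, chosen in advance, to defeat the ``mixed'' configurations for \emph{every} pair $(\chi,\sigma)$ with $T\vdash\chi\lor\sigma$; a Rosser-style diagonalization in the spirit of Lemma \ref{DPtoSDP} is performed per instance $(\chi,\sigma)$, and there is no indication of how a single auxiliary fixed point could handle all instances uniformly, nor how it would feed back into the already-fixed $\alpha,\beta$ without circularity. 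As written, condition 1 of the proposition is simply not proved. There are also smaller problems: your argument that $T\nvdash\Box\bot$ appeals to the ``$\PA$-independence'' of $\alpha,\beta$, which is not established (if neither $\Box\alpha$ nor $\Box\beta$ is $T$-provable, both sentences are false $\Sigma_1$ sentences, and $\PA\vdash\neg\alpha$ need not be absurd); and your sketch of the failure of $\B$-$\DP$ does not actually produce a contradiction, since your $\alpha$ asserts that $\Box\alpha$ is proved \emph{first}, so a minimal proof of $\Box\alpha$ makes $\alpha$ true and yields $T\vdash\Box\alpha$ consistently rather than refuting minimality.

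The paper's proof avoids all of this with a decomposition you are missing, and no Rosser machinery at all. Take a G\"odel sentence $\varphi$ of $\PA$ and set $T:=\PA(\KF)+\{\Box\varphi\lor\Box\neg\varphi\}$, $T_0:=\PA(\KF)+\{\Box\varphi\}$, $T_1:=\PA(\KF)+\{\Box\neg\varphi\}$. The $\SB$-deduction theorem (Corollary \ref{SBDT}) gives $T\vdash\psi$ iff both $T_0\vdash\psi$ and $T_1\vdash\psi$, i.e.\ $T$ is the intersection of $T_0$ and $T_1$. Each $T_i$ adds an axiom of the form $\Box\psi_k$ with $\NS\models\rho_{T_i}(\Box\psi_k)$ trivially, so Corollary \ref{SBEP} gives $\SB$-$\EP$ for each $T_i$, hence $(\SB,\Sigma_1)$-$\DP$ and then $\SB$-$\DC$ by Corollary \ref{SBDPDC}; and $\SB$-$\DC$ is manifestly preserved under this intersection. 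Failure of $\B$-$\DP$ is then immediate: $T\vdash\Box\varphi$ would force $T_1\vdash\Box\varphi$ and hence $T_1\vdash\Box\bot$, which is refuted via the $\alpha$-translation (Proposition \ref{alpha}) and the independence of $\varphi$ over $\PA$. If you want to salvage your proposal, the fix is to prove $\SB$-$\DC$ for each of $\PA(\KF)+\{\Box\alpha\}$ and $\PA(\KF)+\{\Box\beta\}$ separately and intersect, rather than attacking the disjunctive theory head-on --- at which point the Rosser apparatus becomes unnecessary.
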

\begin{proof}
Let $\varphi$ be a G\"odel sentence of $\PA$. 
Let $T : = \PA(\KF) + \{\Box \varphi \lor \Box \neg \varphi\}$, $T_0 : = \PA(\KF) + \{\Box \varphi\}$, and $T_1 : = \PA(\KF) + \{\Box \neg \varphi\}$. 
By the $\SB$-deduction theorem, it is shown that for any $\LB$-formula $\psi$, 
\begin{equation}\label{eq3}
	T \vdash \psi\ \text{if and only if both}\ T_0 \vdash \psi\ \text{and}\ T_1 \vdash \psi. 
\end{equation}
Suppose, towards a contradiction, $T_0 \vdash \Box \bot$. 
By the $\SB$-deduction theorem, $\PA(\KF)$ proves $\Box \varphi \to \Box \bot$. 
Since this is also provable in $\PA(\Triv)$, by Proposition \ref{alpha}, we have $\PA \vdash \alpha(\Box \varphi) \to \alpha(\Box \bot)$. 
Then, $\PA \vdash \neg \varphi$, a contradiction. 
Similarly, we can prove $T_1 \nvdash \Box \bot$. 

1. Let $\psi$ be any $\SB$ sentence such that $T \vdash \psi \lor \PR_T(\gn{\psi})$. 
Then, for $i \in \{0, 1\}$, $T_i \vdash \psi \lor \PR_T(\gn{\psi})$ by (\ref{eq3}). 
By Corollary \ref{SBEP}, $T_i$ has $\SB$-$\EP$, and hence has $(\SB, \Sigma_1)$-$\DP$. 
By Corollary \ref{SBDPDC}, $T_i$ is $\SB$-$\DC$. 
Therefore, $T_i \vdash \psi$. 
By (\ref{eq3}), we obtain $T \vdash \psi$. 
Thus, $T$ is also $\SB$-$\DC$. 

2. If $T \vdash \Box \neg \varphi$ or $T \vdash \Box \varphi$, then $T_0 \vdash \Box \bot$ or $T_1 \vdash \Box \bot$ by (\ref{eq3}). 
This is a contradiction. 
Therefore, $T \nvdash \Box \neg \varphi$ and $T \nvdash \Box \varphi$. 
On the other hand, $T \vdash \Box \varphi \lor \Box \neg \varphi$. 
Thus, $T$ does not have $\B$-$\DP$. 
\end{proof}

\section{Problems}

In the present paper, several properties related to the modal disjunction property in modal arithmetic are introduced, and the relationships between them are studied.
However, some of the properties have not yet been separated in some particular situation. 
In this section, we list several unsolved problems for further study.

In \ref{Sec:B-DB}, we introduced $\B$-$\DP$ and $\DB$-$\DP$. 
For theories which are closed under the box elimination rule, these properties are equivalent. 
However, we have not yet been successful in clarifying whether they are equivalent or not in general.
We propose the following problem.

\begin{prob}\leavevmode
\begin{enumerate}
	\item Does there exist an $\LB$-theory which has $\DB$-$\DP$ but does not have $\B$-$\DP$?
	\item For each $n \geq 2$, does there exist an $\LB$-theory which has $\B^n$-$\DP$ but does not have $\B^{n+1}$-$\DP$?
\end{enumerate}
\end{prob}

For any $\Sigma_1$-unsound r.e.~extension $T$ of $\PA(\Ver)$, $T$ has $\B$-$\DP$ but does not have $\SB$-$\DC$ (see Propositions \ref{Verprov} and \ref{SBDPEP}). 
On the other hand, for consistent r.e.~extensions of $\PA(\SF)$, $\B$-$\DP$ implies $\SB$-$\DC$ by Propositions \ref{DPEPb} and \ref{MDPtoSBDP} and Corollary \ref{SBDPDC}. 
We have not yet been sure whether $\B$-$\DP$ yields $\SB$-$\DC$ in general when $T \nvdash \Box \bot$.

\begin{prob}
Does there exist an $\LB$-theory $T$ such that $T \nvdash \Box \bot$, $T$ has $\B$-$\DP$, and $T$ is not $\SB$-$\DC$?
\end{prob}

In the statement of Proposition \ref{MDPtoSBDP}, it is assumed that $T$ is an extension of $\PA(\KF)$. 
It is not settled yet whether $\PA(\KF)$ can be replaced by $\PA(\K)$ in the statement. 

\begin{prob}
In the statements of Proposition \ref{MDPtoSBDP} and Corollaries \ref{FSrev} and \ref{SBsound}.1, can $\PA(\KF)$ be replaced by $\PA(\K)$?
\end{prob}

Proposition \ref{nclosed}.1 shows that there exists an $\LB$-unsound subtheory $T$ of $\PA(\SF)$. 
Related to this fact, we propose the following problem. 

\begin{prob}
Does there exist an $\LB$-unsound r.e.~subtheory of $\PA(\GL)$?
\end{prob}

Proposition \ref{ex2} shows that $\SB$-$\DC$ does not imply $\B$-$\DP$. 
We are not successful in determining whether the theory $T$ in the proof of Proposition \ref{ex2} is closed in the box elimination rule.
We then propose the following problem. 

\begin{prob}
Does there exist a consistent r.e.~$\LB$-theory $T$ such that $T$ is closed under the box elimination rule, $T$ is $\SB$-$\DC$, and $T$ does not have $\MDP$?
\end{prob}

\begin{remark}
Notice that if we define $T$ to be the theory $\PA(\KF) + \{\Box \varphi \lor \Box \neg \varphi\}$ for a $\Pi_1$ G\"odel sentence $\varphi$ of $\PA(\KF)$, then $T$ is not closed under the box elimination rule. 
This is because $T \vdash \Box (\Box \varphi \lor \neg \varphi)$ and $T \nvdash \Box \varphi \lor \neg \varphi$. 
For, if $T \vdash \Box \varphi \lor \neg \varphi$, then $\PA(\KF) \vdash \Box \neg \varphi \to (\Box \varphi \lor \neg \varphi)$. 
By Proposition \ref{pi}, $\PA \vdash \PR_{\PA(\KF)}(\gn{\neg \varphi}) \to (\PR_{\PA(\KF)}(\gn{\varphi}) \lor \neg \varphi)$. 
Since $\PR_{\PA(\KF)}(\gn{\varphi})$ is $\PA(\KF)$-equivalent to $\neg \varphi$, we have $\PA(\KF) \vdash \PR_{\PA(\KF)}(\gn{\neg \varphi}) \to \neg \varphi$. 
By L\"ob's theorem, $\PA(\KF) \vdash \neg \varphi$. 
This contradicts the $\Sigma_1$-soundness of $\PA(\KF)$. 
\end{remark}

\bibliographystyle{plain}
\bibliography{ref}

\begin{thebibliography}{10}

\bibitem{Boo93}
George {Boolos}.
\newblock {\em {The logic of provability}}.
\newblock Cambridge: Cambridge University Press, 1993.

\bibitem{Buc93}
Wilfried {Buchholz}.
\newblock Mathematische {L}ogik {II}.
\newblock
  \url{http://www.mathematik.uni-muenchen.de/~buchholz/articles/LogikII.ps},
  1993.

\bibitem{Dos90}
Kosta {Do\v{s}en}.
\newblock {Modal translations of Heyting and Peano arithmetic}.
\newblock {\em {Publications de l'Institut Math\'ematique. Nouvelle S\'erie}},
  47:13--23, 1990.

\bibitem{FF86}
R.~C. {Flagg} and H.~{Friedman}.
\newblock {Epistemic and intuitionistic formal systems}.
\newblock {\em {Annals of Pure and Applied Logic}}, 32:53--60, 1986.

\bibitem{Fri75}
Harvey {Friedman}.
\newblock {The disjunction property implies the numerical existence property}.
\newblock {\em {Proceedings of the National Academy of Sciences of the United
  States of America}}, 72:2877--2878, 1975.

\bibitem{FS89}
Harvey {Friedman} and Michael {Sheard}.
\newblock {The equivalence of the disjunction and existence properties for
  modal arithmetic}.
\newblock {\em {The Journal of Symbolic Logic}}, 54(4):1456--1459, 1989.

\bibitem{Gen34}
Gerhard Gentzen.
\newblock {Untersuchungen \"uber das logische Schliessen}.
\newblock {\em Mathematische Zeitschrift}, 39(2--3):176--210, 405--431,
  1934--1935.

\bibitem{God33}
Kurt {G\"odel}.
\newblock {An interpretation of the intuitionistic propositional calculus}.
\newblock In {Feferman, S.}, editor, {\em {Collected Works Volume 1}}, pages
  301--303. Oxford Univeristy Press, 1933.

\bibitem{Goo84}
Nicolas~D. {Goodman}.
\newblock {Epistemic arithmetic is a conservative extension of intuitionistic
  arithmetic}.
\newblock {\em {The Journal of Symbolic Logic}}, 49:192--203, 1984.

\bibitem{Gua79}
David {Guaspari}.
\newblock {Partially conservative extensions of arithmetic}.
\newblock {\em {Transactions of the American Mathematical Society}},
  254:47--68, 1979.

\bibitem{HH00}
Volker {Halbach} and Leon {Horsten}.
\newblock {Two proof-theoretic remarks on EA+ECT}.
\newblock {\em {Mathematical Logic Quarterly}}, 46(4):461--466, 2000.

\bibitem{HC96}
G.~E. {Hughes} and M.~J. {Cresswell}.
\newblock {\em {A new introduction to modal logic}}.
\newblock London: Routledge, 1996.

\bibitem{JE76}
Don~C. {Jensen} and Andrzej {Ehrenfeucht}.
\newblock {Some problem in elementary arithmetics}.
\newblock {\em {Fundamenta Mathematicae}}, 92:223--245, 1976.

\bibitem{Kle45}
Stephen~Cole {Kleene}.
\newblock {On the interpretation of intuitionistic number theory}.
\newblock {\em {The Journal of Symbolic Logic}}, 10:109--124, 1945.

\bibitem{Kur18}
Taishi {Kurahashi}.
\newblock {On partial disjunction properties of theories containing Peano
  arithmetic}.
\newblock {\em {Archive for Mathematical Logic}}, 57(7-8):953--980, 2018.

\bibitem{Kur20}
Taishi {Kurahashi}.
\newblock {A note on derivability conditions}.
\newblock {\em {The Journal of Symbolic Logic}}, 85(3):1224--1253, 2020.

\bibitem{Mak71}
David {Makinson}.
\newblock {Some embedding theorems for modal logic}.
\newblock {\em {Notre Dame Journal of Formal Logic}}, 12:252--254, 1971.

\bibitem{Rau10}
Wolfgang {Rautenberg}.
\newblock {\em {A concise introduction to mathematical logic. 3rd revised and
  enlarged ed}}.
\newblock London: Springer, 3rd revised and enlarged ed. edition, 2010.

\bibitem{Rei85}
William~N. {Reinhardt}.
\newblock {Absolute versions of incompleteness theorems}.
\newblock {\em {N\^ous}}, 19(3):317--346, 1985.

\bibitem{Rei86}
William~N. {Reinhardt}.
\newblock {Epistemic theories and the interpretation of G\"odel's
  incompleteness theorems}.
\newblock {\em {Journal of Philosophical Logic}}, 15:427--474, 1986.

\bibitem{Sha85}
Stewart {Shapiro}.
\newblock {Epistemic and intuitionistic arithmetic}.
\newblock {Intensional mathematics, Stud. Logic Found. Math. 113, 11-46
  (1985).}, 1985.

\bibitem{Sol76}
Robert~M. {Solovay}.
\newblock {Provability interpretations of modal logic}.
\newblock {\em {Israel Journal of Mathematics}}, 25:287--304, 1976.

\end{thebibliography}

\end{document}